\documentclass[dvipsnames]{article}

\usepackage{microtype}
\usepackage{graphicx}
\usepackage{subcaption} %
\usepackage{booktabs} %

\usepackage{hyperref}

\usepackage[preprint]{icml2026}

\usepackage{amsmath}
\usepackage{amssymb}
\usepackage{mathtools}
\usepackage{amsthm}

\usepackage[capitalize,noabbrev]{cleveref}

\usepackage{xcolor}
\usepackage{xspace}
\usepackage{thmtools}
\usepackage{thm-restate}
\declaretheorem[name=Theorem,numberwithin=section]{theorem}%
\declaretheorem[name=Lemma,sibling=theorem]{lem}

\usepackage{algorithm}%
\usepackage{enumitem}
\usepackage{aligned-overset}
\newcommand{\grad}{\nabla}
\DeclareMathOperator*{\argmin}{arg\,min}
\newcommand{\norm}[2]{\left\lVert #1\right\rVert_{#2}}

\DeclareMathOperator{\dom}{dom}

\definecolor{mydarkgreen}{RGB}{39,130,67}
\newcommand{\green}{\color{mydarkgreen}}

\newcommand{\algname}[1]{{\green \small \sf #1}\xspace}

\newcommand{\gladssn}{\algname{GLAd-SSN}}
\newcommand{\adam}{\algname{Adam}}
\newcommand{\leapssn}{\algname{LeAP-SSN}}

\newcommand{\lzsymb}{m}

\newcommand{\HH}{\mathcal{H}}

\newcommand{\F}{\mathcal{F}}
\newcommand\so{
  \mathchoice
    {{\scriptstyle\mathcal{O}}}%
    {{\scriptstyle\mathcal{O}}}%
    {{\scriptscriptstyle\mathcal{O}}}%
    {\scalebox{.7}{$\scriptscriptstyle\mathcal{O}$}}%
  }

\newcommand\R{\mathbb{R}}

\definecolor{darkorange}{rgb}{8.,.4,0.}

\usepackage{pifont}
\newcommand{\greencheck}{{\color{green}\checkmark}}
\newcommand{\redcross}{{\color{red}\ding{55}}}

\usepackage{array}

\theoremstyle{plain}

\theoremstyle{definition}

\newtheorem{ass}{Assumption}
\theoremstyle{remark}
\newtheorem{remark}[theorem]{Remark}
\crefname{ass}{Assumption}{Assumptions}

\usepackage[textsize=tiny]{todonotes}
 \usepackage{multirow} 

\icmltitlerunning{Skip the Hessian, Keep the Rates: Globalized Semismooth Newton with Lazy Hessian Updates}

\begin{document}

\twocolumn[
\icmltitle{Skip the Hessian, Keep the Rates: \\ Globalized Semismooth Newton with Lazy Hessian Updates}

\icmlsetsymbol{equal}{*}

\begin{icmlauthorlist}
\icmlauthor{Amal Alphonse}{equal,yyy}
\icmlauthor{Pavel Dvurechensky}{equal,yyy}
\icmlauthor{Clemens Sirotenko}{equal,yyy}
\end{icmlauthorlist}

\icmlaffiliation{yyy}{Weierstrass Institute, Anton-Wilhelm-Amo-Str. 39, 10117 Berlin, Germany}

\icmlcorrespondingauthor{Clemens Sirotenko}{sirotenko@wias-berlin.de}

\icmlkeywords{Machine Learning, ICML}

\vskip 0.3in
]

\printAffiliationsAndNotice{\icmlEqualContribution} %

\begin{abstract}
Second-order methods are provably faster than first-order methods, and their efficient implementations for large-scale optimization problems have attracted significant  attention. Yet, optimization problems in ML often have nonsmooth derivatives, which makes the existing convergence rate theory of second-order methods inapplicable. In this paper, we propose a new semismooth Newton method (SSN) that enjoys both global convergence rates and asymptotic superlinear convergence without requiring second-order differentiability. Crucially, our method does not require (generalized) Hessians to be evaluated at each iteration but only periodically, and it reuses stale Hessians otherwise (i.e., it performs lazy Hessian updates), saving compute cost and often leading to significant speedups in time, whilst still maintaining strong global and local convergence rate guarantees. We develop our theory in an infinite-dimensional setting and illustrate it with numerical experiments on matrix factorization and neural networks with Lipschitz constraints.

\end{abstract}

\section{Introduction}
Recent developments in second-order optimization methods showcase their superior convergence rates compared to first-order methods and propose efficient implementations that are scalable enough to solve large problems arising in ML.
The efficiency of second-order methods is theoretically well understood for smooth problems with Lipschitz Hessians where they provably find second-order stationary points of non-convex functions and have fast convergence rates for minimizing convex functions. 
At the same time, optimization problems in ML often have nonsmooth elements, e.g., training DNNs with ReLU nonlinearities, (kernel) support vector machines \cite{jiang2020semismooth,yin2019semismooth}, kernel-based optimal transport problems \cite{lin2024specialized}, partial correlation network estimation \cite{kim2025partial}, learning problems with LASSO and other nonsmooth (group) sparsity inducing penalizations  \cite{shi2020semismooth,Luo2019Solving,Hu2024projected}, square-root regression \cite{Tang2020Sparse}.
Nonsmooth derivatives naturally appear in optimization with inequality constraints that are treated by penalization, e.g., in nonnegative PCA \cite{Hu2024projected}, nonnegative matrix factorization \cite{Chow2016Analysis}, nonnegative least squares, nonparametric maximum likelihood estimation in mixture models \cite{Zhang2024Efficient}. 
Moreover, classical losses such as Huber, squared hinge, or capped logistic have jumps in second derivatives.
The majority of the global results on second-order methods are inapplicable in these situations since the Hessians are not Lipschitz anymore. 

An important property of classical Newton methods in the smooth world is their local superlinear convergence. This property can be ported to the nonsmooth world based on the notion of semismooth derivatives and the celebrated semismooth Newton method (SSN). This method is widely used for ML applications as witnessed by the above-mentioned references, where the objectives have semismooth and even $\gamma$-order/strongly semismooth derivatives. Nevertheless, there is a gap in the theory of global rates for second-order methods in the $\gamma$-order/strongly semismooth setting.%

A drawback of Newton methods is their potentially expensive iteration that requires evaluating the Hessian and solving a linear system. Assuming Lipschitz Hessians, it was recently shown \cite{doikov2023second} that Hessian updates can be skipped for some iterations (i.e., one can perform lazy Hessian updates) in regularized Newton methods; such results are not known in the nonsmooth setting.

To sum up, existing second-order methods for optimization involving nonsmooth derivatives suffer from expensive iterations and lack the full picture of theoretical convergence guarantees that covers both global rates and local asymptotic behavior in all the semismooth and $\gamma$-order semismooth settings. The goal of this paper is to close these gaps. 
 
We consider the composite minimization problem
\begin{equation}
\label{eq:opt}
   \min_{x \in \HH} \{F(x)\coloneqq f(x) + \psi(x) \}, \tag{P}
\end{equation}
where $\HH$ is a Hilbert space,  $\psi\colon \HH \to \overline{\mathbb{R}}$ is a potentially nonsmooth, but simple, convex function, and $f \in C^{1,1}(\HH; \mathbb{R})$ is a (possibly nonconvex) function such that the Fréchet derivative $f'$ is not necessarily smooth.

\paragraph{Related works.} 

Our paper complements several strands of existing literature on second-order methods. First, in finite-dimensions, global rates are obtained in many settings, including convex and non-convex smooth problems, see, e.g.,  \cite{nesterov2006cubic,cartis2011adaptive,nesterov2008accelerating,monteiro2013accelerated,gasnikov2019near,carmon2022optimal,kovalev2022first,mishchenko2023regularized,doikov2024gradient,gratton2025yet}, with the main assumption that the Hessian is Lipschitz. 
Nonsmooth problems and global rates under the umbrella of a H\"older Hessian assumption were considered, e.g., in \cite{grapiglia2017regularized,grapiglia2019accelerated,CarGouToi19,doikov2024super}.
These papers use global assumptions on the Hessian and do not consider asymptotic rates under semismoothness at the solution or local convergence.

On the other hand, classical works \cite{MR1972649, MR1786137, MR1250115, MR1216791, MR1972217, hintermuller2002primal} show including in infinite-dimensional settings, local superlinear convergence of SSN -- a generalization of Newton method that uses generalized second derivatives instead of the Hessian for nonsmooth problems. Globalized convergence results are also obtained, e.g., in \cite{pang1995globally,qi1995trust,facchinei2003finite,cui2021modern,potzl2022second,potzl2024inexact,khanh2024globally,mordukhovich2024second,wachsmuth2025globalized,ouyang2025trust}, yet the question of global convergence rates is not answered in these papers. 
\cite{LeapSSN} obtain global rates, but consider neither the $\gamma$-order semismooth setting nor allow for lazy Hessian updates.

The idea of reusing the Hessian of a previous iterate traces back to Shamanskii \cite{Shamanskii1967AMO}, where local convergence was given for solving systems of nonlinear equations. It was shown recently in \cite{doikov2023second,chen2025computationally} that certain modern globalized Newton methods can also reuse the Hessian from previous iterations with a small loss in iteration complexity, which is outweighed by the computational savings in each iteration; global rates are also established. Those papers are based on Lipschitz Hessian assumptions, and it is not known whether the same can be achieved for nonsmooth problems.

\paragraph{Contributions.} 
To fill the outlined gaps in the literature, we propose in this paper the first SSN that combines all four desiderata: lazy Hessian updates, global convergence rates, local superlinear convergence under semismoothness, improved local superlinear convergence under $\gamma$-order semismoothness. The latter is important since all the ML applications mentioned above and many others in fact satisfy a $\gamma$-order semismoothness assumption.
We do everything in potentially infinite-dimensional Hilbert spaces, and when specialized to the finite-dimensional setting, show that for $f\in C^2$ the convergence is superlinear, which gives an asymptotic improvement over known results in this setting. 
Importantly, all these results are achieved by a single method that does not require specification of objective class as the input. 
We also provide numerical experiments which demonstrate the efficiency of our algorithm in highly relevant ML topics such as matrix factorization and constrained NN training.

\paragraph{Notation.} 

The dual to the space $\HH$ is denoted by $\HH^*$. We use $\|\cdot\|$ to denote the norm on $\HH$ and $\|\cdot\|_*$ to denote the norm on $\HH^*$. We write $\langle g, x \rangle$ to denote the duality pairing of $g\in \HH^*$ and $x \in \HH$. For $x\in \HH$, we set $B_R(x)\coloneqq \{u\in \HH:\|x-u\|\leq R\}$. For a point $x\in \HH$ and a closed set $A \subseteq \HH$, we denote ${\rm dist}(x,A)\coloneqq \inf_{u\in A}\|x-u\|_{\HH}$. For  $G \colon \HH \to \overline{\mathbb{R}}\coloneqq \R \cup \{+\infty\}$, we denote $\dom G\coloneqq \{x\in \HH: G(x)<+\infty\}$. For a proper convex function $\psi \colon \HH \to \overline{\mathbb{R}}$, we denote by $\partial \psi$ its convex subdifferential. We also use $\psi'(x)$ to denote an element of $\partial \psi(x)$. For a function $f\colon \HH \to \overline{\mathbb{R}}$ that is $C^{1}$ on $\dom f$, i.e., continuously Fréchet differentiable, we denote by $f'(x)\in \HH^*$ its Fréchet derivative at $x\in \dom f$. We denote by $C^2$ the class of twice continuously Fréchet differentiable functions.

\section{Semismooth Newton Method}
\label{sec:algorithm} 

We make the next standing assumption for the entire paper.
\begin{ass}
\label{ass:basic}
\begin{enumerate}[label=(\roman*)]\itemsep=0em
    \item The function $f\colon \HH \to \overline{\mathbb{R}}$ is continuously Fréchet differentiable on $\dom f$ and $f'$ is $\frac{L}{2}$-Lipschitz.
    \item The function $\psi\colon \HH \to \overline{\mathbb{R}}$ is convex, proper, and lower semicontinuous.
    \item The function $F\coloneqq f+\psi$ is bounded from below by the global minimal value $F^*$ and problem \eqref{eq:opt} has a nonempty solution set $S\coloneqq \{x^* \in \HH: F(x^*)=F^*\}$.
    \item Given a point $x \in \HH$, we can evaluate\footnote{To have a flexible algorithm, we do not specify at this moment how $H$ is related to $f$. This can be the second derivative or Hessian if $f \in C^2$, a generalized second-order derivative if $f'$ is nonsmooth, or a quasi-Newton approximation for the (generalized) second-order derivative. We also use the same constant $L$ as for the Lipschitz constant of $f'$ since it is w.l.o.g. For simplicity, we just refer to $H$ as the Hessian (or as second-order information). See also the discussion in \cref{S:Newton_derivative}.} a linear operator $H(x)$ such that the operator norm is bounded: 
    \begin{align}
    \label{eq:bounded_Hessian}
        \norm{H(x)}{\mathrm{op}}\coloneqq \sup_{h\in \HH, h \ne 0} \frac{\|H(x)h\|_*}{\|h\|}\leq \frac{L}{2}, 
    \end{align}
    uniformly for all $x \in \dom f$.
\end{enumerate} 
\end{ass}

\begin{algorithm*}[t]%
\caption{\gladssn (\underline{G}lobalized \underline{L}azy \underline{Ad}aptive %
\underline{S}emi\underline{s}mooth \underline{N}ewton Method)}
\begin{algorithmic}[1]\label{alg:proximal_newton}
\STATE \textbf{input:} Parameters $p \in [0,1]$, $\lzsymb \in \mathbb{N}$, 
$x_0\in \dom F$, $\psi'(x_0) \in \partial \psi(x_0)$, and $\Lambda_0 >0$.  \label{step:input} 
\STATE \textbf{for} $k=0,1,\ldots,$ \textbf{do} 
\STATE \hspace{0.7cm} \textbf{for} $j_k = 0,1,\ldots$, \textbf{do} 
\STATE \hspace{1.4cm} Set $\lambda=4^{j_k}\Lambda_k\norm{F'(x_k)}{}^p$. 
\STATE \hspace{1.4cm} Compute
\begin{align}
        \qquad
        x_+ = \argmin_{y \in \mathcal{H}}  f(x_k)+\langle f'(x_k), y-x_k \rangle + \frac 12 \langle H(x_{ \pi(k)}) (y-x_k), y-x_k \rangle + \frac{\lambda}{2}\norm{y-x_k}{}^2 + \psi(y) \label{eq:prox_step_alg}
\end{align}
\hspace{1.4cm} in the sense of stationarity. %
\STATE \hspace{1.4cm} Set $\psi'(x_+) = -f' (x_k) - H(x_{\pi(k)}) (x_+ - x_k) -\lambda\mathcal{R}(x_+ - x_k) $. %
\STATE \hspace{1.4cm} Set $F'(x_+) = f' (x_+) + \psi'(x_+)$.
\STATE \hspace{1.4cm} \textbf{if} $x_+$ satisfies the acceptance conditions
\vspace{-3mm}
\begin{align}
\qquad \langle F'(x_+),x_k -x_+\rangle \geq \frac{1}{2\lambda}\norm{F'(x_+)}{*}^2 \quad\text{and}\quad
    F(x_k)-F(x_+)\geq \frac{\lambda}{4} \norm{x_+-x_k}{}^2\label{eq:stop_cond_alg}
\end{align}
\vspace{-4mm}
\label{step:acceptance}
\STATE \hspace{2.1cm} Set $x_{k+1} = x_+$, $\lambda_k = \lambda=4^{j_k}\Lambda_k\norm{F'(x_k)}{}^p$, $\Lambda_{k+1} = 4^{j_k}\Lambda_k/4$.\label{step:k+1_update}
\STATE \hspace{2.1cm} Set $\psi'(x_{k+1})= \psi'(x_+)$, $F'(x_{k+1}) = f'(x_{k+1}) + \psi'(x_{k+1})$.
\STATE \hspace{2.1cm} \textbf{break} and go to next iteration of the outer loop (line 2).
\STATE \hspace{1.4cm} \textbf{end if}
\STATE \hspace{0.7cm} \textbf{end for}
\STATE \textbf{end for}
\end{algorithmic}
\end{algorithm*}
Throughout, we write $\partial F(x)$ for the limiting subdifferential of $F$, which under the above assumption coincides with the Fr\'echet subdifferential, see \cite{RockafellarWets,Mordukhovich1} for details. We also use $F'(x)$ to denote an element of $\partial  F(x)$.

Our algorithm is based on quadratic regularization of the proximal Newton step with lazy Hessian updates, see \eqref{eq:PLMSN_step}. As in \cite{doikov2023second}, to implement lazy updates of the second-order information, we (re)use the value of the operator $H$ evaluated at iteration 
\begin{equation}
\pi(k) := k-k\bmod \lzsymb\label{eq:def_m},%
\end{equation}
meaning that we only evaluate $H$ every $\lzsymb$ iterations. In particular, when $\lzsymb=1$, $\pi(k)=k$ and we get the standard Newton step. We also use a simple linesearch and adaptively find the regularization parameter $\lambda$ in each iteration so that it guarantees sufficient progress of the algorithm. At the same time, inspired by the universality of the method in \cite{doikov2024super}$,\lambda$ depends on the subgradient norm of $F$ at the current iterate, that is,  we use \textit{gradient regularization}. The resulting algorithm, which we christen \gladssn, is listed as \cref{alg:proximal_newton}.

The main step of the algorithm is defined through the map 
\begin{equation}\label{eq:PLMSN_step}
\begin{aligned}
&x_+(\lambda,x,z)
\coloneqq \arg\min_{y \in \HH} \Bigl\{ \langle f'(x),y-x\rangle %
\\
&\;\;+\frac12\langle H(z)(y-x),y-x\rangle  +\frac{\lambda}{2}\|y-x\|^2 + \psi(y)
\Bigr\}. 
\end{aligned}
\end{equation}

By the first-order optimality condition in \eqref{eq:PLMSN_step}, we have, for $x_+ := x_+(\lambda,x,z)$, 
that there is $\psi'(x_{+}) \in \partial \psi(x_+)$ s.t.
\begin{align}
\psi'(x_+)
&= - f'(x) - H(z)(x_+ - x) - \lambda \mathcal{R}(x_+ - x),
\label{eq:PLMSN_step_optimality}\\
F'(x_+)
&\coloneqq f'(x_+) + \psi'(x_+) \notag\\
&= f'(x_+) - f'(x) - H(z)(x_+ - x)  \label{eq:F_div_def}\\
& \quad- \lambda \mathcal{R}(x_+ - x)
. \notag
\end{align}
Here $\mathcal{R}\colon\HH \to \HH^*$ is the Riesz map; in the finite dimensional setting, if the norm $\norm{x}{} = \sqrt{\langle Bx, x \rangle}$ is used for a self-adjoint positive-definite linear map $B$, $\mathcal{R}$ can be replaced with $B$ in \eqref{eq:PLMSN_step_optimality} and \eqref{eq:F_div_def}. 
From the definition, $F'(x_+)\in \partial F(x_+)$.

To streamline the presentation, we introduce the following notation for $k\geq 0$:
\begin{equation}
\label{eq:F_r_g_notaion}
\begin{alignedat}{2}
F_k      &\coloneqq F(x_k)-F^*,            &\qquad
r_k      &\coloneqq \norm{x_k-x_{k+1}}{}, \\
g_k      &\coloneqq \norm{F'(x_k)}{*},     &
x_{k,+}  &\coloneqq x_+(\lambda_k/4,x_k), \\
r_{k,+}  &\coloneqq \norm{x_k-x_{k,+}}{}.
\end{alignedat}
\end{equation}

\section{Global convergence rates}\label{sec:global}
As will be shown, the convergence behavior is largely determined by the sequence $(\lambda_k)_{k \in \mathbb{N}}$. We therefore first establish that the inner loop used to compute a suitable $\lambda_k$ in \cref{alg:proximal_newton} terminates in finitely many steps, which in turn implies boundedness of the corresponding sequence.
 \begin{restatable}{lem}{lemacceptanceinnernonconv}\label{lem:acceptance_inner_nonconv}
Let \cref{ass:basic} hold and consider iteration $k \geq 0$ of \cref{alg:proximal_newton} applied to \eqref{eq:opt}.
If $g_k>0$ and $\lambda \geq L$, both inequalities in \eqref{eq:stop_cond_alg} hold, i.e., the acceptance conditions of the inner loop of \cref{alg:proximal_newton} hold and the inner loop ends after a finite number of trials.
Furthermore, if $g_{k-1}>0$, we have for every $k \geq 0$
\begin{equation}
    \label{eq:lambda_bound}
    \lambda_k\leq \overline{\lambda}\coloneqq \max\{4L,\Lambda_0g_0^p\}, \quad \Lambda_{k+1}\leq \overline{\lambda}/(4g_k^p).
\end{equation}
\end{restatable}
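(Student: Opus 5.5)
The plan is to verify both acceptance inequalities directly from the optimality relation \eqref{eq:F_div_def}, and then read off the bound \eqref{eq:lambda_bound} from the doubling structure of the line search. Fix iteration $k$, a trial value $\lambda$, and write $x=x_k$, $z=x_{\pi(k)}$, $x_+=x_+(\lambda,x,z)$, $r:=x_+-x$. Introduce the linearization error
\[
e:=f'(x_+)-f'(x)-H(z)r ,
\]
so that $F'(x_+)=e-\lambda\mathcal R r$ by \eqref{eq:F_div_def}. \cref{ass:basic}(i) and \eqref{eq:bounded_Hessian} give $\|e\|_*\le \tfrac L2\|r\|+\tfrac L2\|r\|=L\|r\|$. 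Hence $\|e\|_*\le\lambda\|r\|$ as soon as $\lambda\ge L$.

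\textbf{First inequality in \eqref{eq:stop_cond_alg}.} Expand both sides using $\langle \mathcal R r,r\rangle=\|r\|^2$:
\[
\langle F'(x_+),-r\rangle=\lambda\|r\|^2-\langle e,r\rangle,
\qquad
\tfrac1{2\lambda}\|F'(x_+)\|_*^2=\tfrac1{2\lambda}\|e\|_*^2-\langle e,r\rangle+\tfrac\lambda2\|r\|^2 .
\]
The cross terms cancel. The inequality therefore reduces to $\tfrac\lambda2\|r\|^2\ge \tfrac1{2\lambda}\|e\|_*^2$, i.e.\ to $\|e\|_*\le\lambda\|r\|$, which we have just established.

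\textbf{Second inequality in \eqref{eq:stop_cond_alg}.} Apply the descent lemma for the $\tfrac L2$-Lipschitz gradient: $f(x_+)\le f(x)+\langle f'(x),r\rangle+\tfrac L4\|r\|^2$. Apply convexity of $\psi$ with the subgradient from \eqref{eq:PLMSN_step_optimality}:
\[
\psi(x)\ge\psi(x_+)+\langle f'(x)+H(z)r+\lambda\mathcal R r,\,r\rangle .
\]
Adding the two, the $\langle f'(x),r\rangle$ terms cancel. Bounding $\langle H(z)r,r\rangle\ge-\tfrac L2\|r\|^2$ via \eqref{eq:bounded_Hessian} yields
\[
F(x)-F(x_+)\ge\bigl(\lambda-\tfrac{3L}{4}\bigr)\|r\|^2\ge\tfrac\lambda4\|r\|^2
\]
for $\lambda\ge L$.

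\textbf{Finite termination.} Since $g_k>0$ and $\Lambda_k>0$, the trial values $4^{j}\Lambda_k g_k^p$ tend to infinity as $j\to\infty$ (for $p=0$ they are simply $4^j\Lambda_k$). Hence some $j$ gives $\lambda\ge L$, and the inner loop stops at the latest there.

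\textbf{Bound \eqref{eq:lambda_bound}.} I would argue by induction on $k$, with two cases. If $j_k\ge1$, the trial $\lambda_k/4$ was rejected, so by the first part $\lambda_k/4<L$, i.e.\ $\lambda_k<4L$. If $j_k=0$, then $\lambda_k=\Lambda_k g_k^p$. In either case the update rule gives $\Lambda_{k+1}=\lambda_k/(4g_k^p)\le\overline\lambda/(4g_k^p)$, which is the second claim once $\lambda_k\le\overline\lambda$ is known. The base case $k=0$ is immediate: either $j_0=0$ and $\lambda_0=\Lambda_0g_0^p$, or $\lambda_0<4L$.

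The main obstacle is the $j_k=0$ case for $k\ge1$. There $\lambda_k=\Lambda_kg_k^p=\lambda_{k-1}(g_k/g_{k-1})^p/4$. The claim $\lambda_k\le\overline\lambda$ then needs control of $g_k^p$ against $g_{k-1}^p$, up to the factor $4$ that the update loses, so that the induction hypothesis $\lambda_{k-1}\le\overline\lambda$ propagates. I would first try to extract such control from the accepted conditions at step $k-1$. The first condition gives $g_k\le2\lambda_{k-1}r_{k-1}$. I would then try to bound $r_{k-1}$ by $g_{k-1}$, using the strong monotonicity of the regularized subproblem (with $\lambda_{k-1}$ compared against $\|H\|\le L/2$). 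If this comparison fails in the regime $\lambda_{k-1}<L$, I would instead track the invariant $\Lambda_{k}g_{k-1}^p\le\overline\lambda/4$ directly, and interpret the bound on $\lambda_k$ through it. This is the step whose exact form I expect to have to adjust to match the paper's normalization.
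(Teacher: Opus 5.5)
\paragraph{Review.} Your first part is correct and is the paper's argument up to rearrangement. The paper expands $\|F'(x_+)+\lambda\mathcal{R}r\|_*^2\le L^2\|r\|^2$ where you cancel the cross terms, and the decrease estimate $F(x)-F(x_+)\ge(\lambda-\tfrac{3L}{4})\|r\|^2$ is identical. The base case, the case $j_k>0$, and $\Lambda_{k+1}=\lambda_k/(4g_k^p)$ also match. The gap is the one you flag: for $k\ge1$ and $j_k=0$ you give no argument.

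\paragraph{How the paper closes it.} The paper uses \eqref{eq:g_k_leq_g_k-1}, i.e.\ $g_k\le2g_{k-1}$. This comes from $g_k\le2\lambda_{k-1}r_{k-1}$ (first acceptance inequality plus Cauchy--Schwarz) and the step-length bound \eqref{eq:rk_leq_gk_over_lambda_k_new}, $r_{k-1}\le g_{k-1}/\lambda_{k-1}$. Then $\lambda_k=\tfrac{\lambda_{k-1}}{4}(g_k/g_{k-1})^p\le2^{p-2}\lambda_{k-1}\le\lambda_{k-1}/2\le\overline{\lambda}$. That step-length bound is your first idea: strong monotonicity of the regularized model, whose subgradient at $x_{k-1}$ is exactly the algorithm's $F'(x_{k-1})$. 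With denominator $\lambda_{k-1}$, however, it requires $H(x_{\pi(k-1)})\succeq0$. With only $\|H\|_{\mathrm{op}}\le L/2$ you get $r_{k-1}\le g_{k-1}/(\lambda_{k-1}-L/2)$. For $\lambda_{k-1}\ge L$ this still gives $g_k\le4g_{k-1}$ and $\lambda_k\le4^{p-1}\lambda_{k-1}\le\lambda_{k-1}$. For $\lambda_{k-1}<L$ it degenerates, and for $\lambda_{k-1}\le L/2$ it is void.

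\paragraph{The bound can fail without $H\succeq0$.} Your suspicion is justified: under \cref{ass:basic} alone no bookkeeping can rescue this case, because the bound itself fails. Take $\HH=\R$, $\psi\equiv0$, $p=1$, and $f(x)=-\frac L4x^2$ on $[-R,R]$, continued with $f''=\frac L2$ outside. Then $f'$ is $\frac L2$-Lipschitz and $F$ has minimizers at $\pm2R$. Set $H=f''$, $0<x_0<R/49$, and $\Lambda_0=0.49L/g_0$, so $\overline{\lambda}=4L$.

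At $k=0$ the first trial $\lambda=0.49L$ yields the stationary point of the (concave) model, as prescribed in line 5: $x_1=x_0-f'(x_0)/(0.49L-\tfrac L2)=-49x_0$. The model is exact on $[x_1,x_0]$, so $F'(x_1)=-\lambda(x_1-x_0)$. Hence $\langle F'(x_1),x_0-x_1\rangle=\lambda r_0^2\ge\tfrac{1}{2\lambda}g_1^2$ and $F(x_0)-F(x_1)=0.24L\,r_0^2\ge\tfrac{\lambda}{4}r_0^2$. So $j_0=0$, $\Lambda_1=\Lambda_0/4$, and $g_1=49g_0$.

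At $k=1$ the first trial is $\Lambda_1g_1=\tfrac{49\cdot0.49}{4}L\approx6L\ge L$. It is accepted by the first part, so $\lambda_1\approx6L>\overline{\lambda}$.

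\paragraph{What is needed.} The proof can only be completed by adding the hypothesis $H(x_{\pi(k)})\succeq0$. The paper uses this tacitly when it derives \eqref{eq:g_k_leq_g_k-1} from \eqref{eq:rk_leq_gk_over_lambda_k_new}. Your fallback does not help either. The relation $\Lambda_kg_{k-1}^p=\lambda_{k-1}/4$ is an identity (the second claim at index $k-1$). It says nothing about $\lambda_k=\Lambda_kg_k^p$ unless you control $g_k/g_{k-1}$.
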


Since if for some $k \geq 0$, $\norm{F'(x_{k})}{*} = g_k =0$, we have found a solution to \eqref{eq:opt},  we assume that $g_k>0$ for all $k\geq 0$.

\subsection{Non-convex problems}
In this subsection, we focus on the setting where $F$ in \eqref{eq:opt} is not necessarily convex. We show that under  \cref{ass:basic},  \cref{alg:proximal_newton} has a global nonasymptotic convergence rate $\mathcal{O}(1\slash \sqrt{k})$ in terms of the minimal subgradient norm on the trajectory. The formal result is as follows.

\begin{restatable}{theorem}{thmglobalconvergencesublinearnonconv}
    \label{thm:global_convergence_sublinear_nonconv}
    Let  \cref{ass:basic} hold. %
    Then, the sequence $(x_k)_{k \in \mathbb{N}}$ is well defined with $(x_k)_{k \in \mathbb{N}} \subset \F_0$ %
    where $\F_0\coloneqq \{ x \in \dom F : F(x) \leq F(x_0)\}$ is the sublevel set, and 
    $\norm{F'(x_{k})}{*} \to 0$ as $k\to \infty$. Moreover, the following global nonasymptotic convergence rate holds for $k \geq 1$:
    \begin{equation}
    \label{eq:nonconv_rate}
        \min_{0\leq i \leq k-1}\norm{F'(x_{i+1})}{*}\leq 4 \sqrt{\overline{\lambda}F_0/k}.%
    \end{equation}
    If in addition $\lambda_k \to 0$  as $k\to \infty$, we obtain that $\min_{0\leq i \leq k-1}\norm{F'(x_{i+1})}{*} = \so(1\slash \sqrt{k})$.
    Finally, the number of Newton steps \eqref{eq:prox_step_alg}
    up to the end of iteration $k$ does not exceed $k+1+\log_4\frac{\overline{\lambda}} {4g_k^p\Lambda_0}$ and the number of Hessian evaluations does not exceed $\lceil \frac{k}{m} \rceil$.
\end{restatable}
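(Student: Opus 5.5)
The argument rests on the two acceptance inequalities \eqref{eq:stop_cond_alg} together with the uniform bound $\lambda_k\le\overline{\lambda}$ from \cref{lem:acceptance_inner_nonconv}.

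\emph{Well-definedness and monotonicity.} By \cref{lem:acceptance_inner_nonconv}, at every iteration with $g_k>0$ the inner loop terminates after finitely many trials, so $x_{k+1}$ is defined. The second condition in \eqref{eq:stop_cond_alg} gives $F(x_{k+1})\le F(x_k)-\frac{\lambda_k}{4}r_k^2\le F(x_k)$. By induction, $(x_k)\subset\F_0$ and $(F_k)$ is nonincreasing.

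\emph{Per-step descent in terms of the subgradient.} From the first acceptance condition and Cauchy--Schwarz,
\[
\tfrac{1}{2\lambda_k}g_{k+1}^2\le \langle F'(x_{k+1}),x_k-x_{k+1}\rangle\le g_{k+1}r_k ,
\]
so $g_{k+1}\le 2\lambda_k r_k$. Inserting this into the second condition yields
\[
F_k-F_{k+1}\ge \tfrac{\lambda_k}{4}r_k^2\ge \tfrac{g_{k+1}^2}{16\lambda_k}\ge \tfrac{g_{k+1}^2}{16\overline{\lambda}} .
\]
Summing over $i=0,\dots,k-1$ and using $F_k\ge 0$ gives
\[
\frac{k}{16\overline{\lambda}}\min_{0\le i\le k-1} g_{i+1}^2\le \frac{1}{16\overline{\lambda}}\sum_{i=0}^{k-1} g_{i+1}^2\le F_0 .
\]
This is exactly \eqref{eq:nonconv_rate}. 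Summability of $\sum_i g_{i+1}^2$ also gives $g_k\to0$.

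\emph{The $\so(1/\sqrt k)$ refinement.} Suppose $\lambda_k\to0$. We keep $\lambda_i$ inside the sum, so that $\sum_i g_{i+1}^2/\lambda_i\le 16F_0<\infty$. The tail therefore satisfies $\sum_{i=\lceil k/2\rceil}^{k-1} g_{i+1}^2/\lambda_i\to0$. Bounding this tail from below by $\frac{k}{2}\min_i g_{i+1}^2/\sup_{i\ge k/2}\lambda_i$, we get
\[
k\min_{0\le i\le k-1} g_{i+1}^2\le 2\sup_{i\ge k/2}\lambda_i\cdot o(1)\to0 ,
\]
which is the claimed $\so(1/\sqrt k)$ rate. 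This is the most delicate step of the proof, but a tail argument of this kind is standard.

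\emph{Counting Newton steps and Hessian evaluations.} Iteration $i$ uses $j_i+1$ Newton steps, and $\Lambda_{i+1}=4^{j_i-1}\Lambda_i$, so $j_i=1+\log_4(\Lambda_{i+1}/\Lambda_i)$. The sum telescopes:
\[
\sum_{i=0}^{k}(j_i+1)=2(k+1)+\log_4\frac{\Lambda_{k+1}}{\Lambda_0}.
\]
Now apply $\Lambda_{k+1}\le\overline{\lambda}/(4g_k^p)$ from \eqref{eq:lambda_bound}. This telescoping gives $2(k+1)$ rather than the stated $k+1$. I would first re-check the exact count convention and the offsets in the lemma's bound (for example, whether $\lambda$ is compared to $L$ with the factor of $4$ already absorbed), and adjust the constant so that the count takes the stated form.

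Finally, Hessians are evaluated only at indices $\pi(i)=i-i\bmod m$, that is, at multiples of $m$. Among the $k$ iterations $i=0,\dots,k-1$ there are $\lceil k/m\rceil$ such indices, which gives the stated bound on Hessian evaluations.
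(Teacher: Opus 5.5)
Your arguments for well-definedness, monotonicity, $g_{k+1}\le 2\lambda_k r_k$, the decrease $F_k-F_{k+1}\ge g_{k+1}^2/(16\lambda_k)$, the rate \eqref{eq:nonconv_rate} and $g_k\to0$ are the same as the paper's.

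\textbf{The $\so(1/\sqrt{k})$ claim.} Here you take a different route. The paper keeps $\lambda_i$ in the telescoped sum and applies AM--GM to $\sum_{i<k}1/\lambda_i$. This gives the quantitative bound $\min_{0\le i\le k-1}g_{i+1}^2\le 16F_0\bigl(\prod_{i=0}^{k-1}\lambda_i\bigr)^{1/k}/k$, and the paper then uses that the geometric mean of a null sequence tends to zero.

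Your tail argument is correct, and it never actually uses $\lambda_k\to0$. Since $\sum_i g_{i+1}^2\le16\overline{\lambda}F_0<\infty$, you already have $\lfloor k/2\rfloor\min_{0\le i\le k-1}g_{i+1}^2\le\sum_{i\ge\lceil k/2\rceil}g_{i+1}^2\to0$. So your route shows the $\so(1/\sqrt{k})$ rate holds without the extra hypothesis. The paper's route instead ties the improvement quantitatively to the geometric mean of the $\lambda_i$, which is what you would want if a decay rate for $\lambda_i$ were known.

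\textbf{The Newton-step count.} Trust your computation and do not try to force the stated form; that step of your plan cannot succeed. Iteration $i$ evaluates \eqref{eq:prox_step_alg} for $j=0,\dots,j_i$, that is, $j_i+1$ times. So the total through iteration $k$ is $2(k+1)+\log_4(\Lambda_{k+1}/\Lambda_0)\le 2(k+1)+\log_4\frac{\overline{\lambda}}{4g_k^p\Lambda_0}$. The paper reaches $k+1$ by writing $N_k=\sum_{i=0}^k j_i$, which counts only the rejected trials and drops the accepted one in each iteration.

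The bound as stated is in fact false. Take $\Lambda_0g_0^p\ge 4L$. Then the first trial at $k=0$ is accepted by \cref{lem:acceptance_inner_nonconv}, so one Newton step is taken. But $\overline{\lambda}=\Lambda_0g_0^p$, so the stated bound equals $1+\log_4\tfrac14=0$. No re-reading of the lemma's constants repairs this. The expression $k+1+\log_4\frac{\overline{\lambda}}{4g_k^p\Lambda_0}$ is a valid bound on $\sum_{i\le k}j_i$, the number of rejected trials, whereas the Newton-step bound needs $2(k+1)$, as you found.

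Your Hessian count is right when it covers iterations $0,\dots,k-1$. If iteration $k$ is included, as in the Newton-step count, you get $\lfloor k/m\rfloor+1$, which exceeds $\lceil k/m\rceil$ whenever $m\mid k$.
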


\subsection{Non-convex problems under PL condition} 
\label{subsec:global_linear_under_global_PL}
In this subsection, we still allow that $F$ in \eqref{eq:opt} may be nonconvex, but we additionally assume that $F$ satisfies the Polyak--\L{}ojasiewicz (PL) condition. 
This allows us to obtain a global nonasymptotic \emph{linear} convergence rate for \cref{alg:proximal_newton} in terms of the objective functional values and the iterates. Note that all the results in \cref{thm:global_convergence_sublinear_nonconv} hold also in this case.
\begin{restatable}[Global linear convergence]{theorem}{thmglobalconvergencenonconvPL}
    \label{thm:global_convergence_nonconv_PL}
    Let \cref{ass:basic} hold and $F$ satisfy the Polyak--\L{}ojasiewicz (PL) condition for some $\mu >0$:
\begin{equation}
    \label{eq:PL}
        \frac{1}{2\mu} {\rm dist}(0, \partial F(x))^2 \geq F(x)-F^*, \quad \forall x \in \dom F.
\end{equation}
    Then, the following statements hold. %
    \begin{enumerate}[label=(\roman*)]\itemsep=0em
    \item The objective values converge linearly, i.e., for $k \geq 0$,%
        \begin{equation}
        \label{eq:nonconv_PL_rate}
        F(x_{k})-F^* \leq \exp\left(-\frac{\mu}{\mu+8\overline{\lambda}} \cdot k\right)\cdot(F({x}_0)-F^*).%
        \end{equation}
        If in addition $\lambda_k \to 0$ as $k\to \infty$, then $F(x_k)-F^*$ converges to $0$ superlinearly.
        \item %
        If the sublevel set $\F_0$ is bounded, i.e.,
        \begin{equation}
            \label{eq:bounded_sublevel_set}
             D_0\coloneqq  \sup_{x, y \in \mathcal{F}_0} \norm{x-y}{} < +\infty,  
        \end{equation}
        and $p<1$, then $x_k$ converges strongly and linearly to a global minimum  $x^*$, and $\norm{F'(x_k)}{*}$ converges linearly to $0$.
        If in addition $\lambda_k \to 0$ as $k\to \infty$, then $x_k \to x^*$ and $\norm{F'(x_k)}{*} \to 0$ superlinearly.
    \end{enumerate}
    \end{restatable}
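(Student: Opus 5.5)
The plan is to derive a one-step contraction for $F_k := F(x_k)-F^*$ by combining the two acceptance conditions \eqref{eq:stop_cond_alg} with the PL inequality \eqref{eq:PL}, and then sum the resulting geometric series for the iterates.

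\textbf{Contraction for the function values.} By Cauchy--Schwarz, the first acceptance inequality gives
\[
\frac{1}{2\lambda_k}g_{k+1}^2\leq g_{k+1}r_k,
\qquad\text{hence}\qquad
g_{k+1}\leq 2\lambda_k r_k .
\]
Inserting this into the second acceptance inequality yields
\[
F_k-F_{k+1}\;\geq\;\frac{\lambda_k}{4}r_k^2\;\geq\;\frac{g_{k+1}^2}{16\lambda_k}.
\]
Since $F'(x_{k+1})\in\partial F(x_{k+1})$, the PL condition \eqref{eq:PL} gives $g_{k+1}^2\geq 2\mu F_{k+1}$, and therefore
\[
F_k-F_{k+1}\geq \frac{\mu}{8\lambda_k}F_{k+1},
\qquad\text{i.e.}\qquad
F_{k+1}\leq \Big(1+\frac{\mu}{8\lambda_k}\Big)^{-1}F_k .
\]
By \cref{lem:acceptance_inner_nonconv} we have $\lambda_k\leq\overline{\lambda}$, so the contraction factor is at most
\[
\frac{8\overline{\lambda}}{8\overline{\lambda}+\mu}=1-\frac{\mu}{\mu+8\overline{\lambda}}\leq \exp\Big(-\frac{\mu}{\mu+8\overline{\lambda}}\Big).
\]
Iterating this bound gives \eqref{eq:nonconv_PL_rate}. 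If in addition $\lambda_k\to0$, the factor $(1+\mu/(8\lambda_k))^{-1}$ tends to $0$, so $F_k\to0$ superlinearly.

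\textbf{Linear convergence of the iterates.} For part (ii), the estimate $r_k^2\leq 4F_k/\lambda_k$ only helps if $\lambda_k$ is bounded below, and it is not. Indeed $\lambda_k=4^{j_k}\Lambda_k g_k^p$ can be small because $g_k$ is small. To control $\lambda_k$ from below:
\begin{itemize}
\item Note that $\Lambda_{k+1}\geq \Lambda_k/4$, so $\Lambda_k\geq 4^{-k}\Lambda_0$. This lower bound is too weak.
\item Instead, bound $r_k$ directly by the first-order optimality of the subproblem. Since $x_{k+1}$ minimizes the strongly convex model plus $\psi$ whenever $\lambda_k\geq L/2\geq\norm{H}{\mathrm{op}}$, and is otherwise a stationary point, we can compare the model with $\psi$ at $x_{k+1}$ and at $x_k$. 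This gives a bound of the form $r_k\lesssim g_k/\lambda_k$, where $g_k$ is the subgradient at $x_k$ produced by the algorithm.
\item Combining this with $r_k\leq 2\sqrt{F_k/\lambda_k}$ gives
\[
r_k\;\leq\;\min\Big\{\frac{c\,g_k}{\lambda_k},\;2\sqrt{F_k/\lambda_k}\Big\}.
\]
\end{itemize}
This is where $p<1$ and the bounded sublevel set enter. We have $\lambda_k\gtrsim \Lambda_k g_k^p$, and we need a lower bound on $\Lambda_k$ that does not decay exponentially. The bookkeeping from \cref{lem:acceptance_inner_nonconv} gives $\Lambda_{k+1}\leq\overline{\lambda}/(4g_k^p)$; the reverse direction has to come from the doubling mechanism. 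The plan is to show that $\Lambda_k$ can only decrease when the step is accepted immediately, and that whenever $\lambda\geq L$ the step is always accepted. Together these should give $\lambda_k\geq c\min\{L,\Lambda_0 g_k^p\}$, up to tracking the $\Lambda$ factor.

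\textbf{Summing the series.} Given such a lower bound, we have $r_k\lesssim \max\{g_k,\,g_k^{1-p}\}$. Bounding $g_k$ by $2\lambda_{k-1}r_{k-1}$ and using the contraction of $F$, we expect a geometric bound $r_k\lesssim \sqrt{F_k}^{\,\theta}$ for some $\theta>0$ depending on $p$. The bounded sublevel set $D_0$ controls the constants, since all iterates lie in $\F_0$ by \cref{thm:global_convergence_sublinear_nonconv}. Then $\sum_k r_k<\infty$, so $(x_k)$ is Cauchy and converges strongly to some $x^*$, with a linear tail bound. The PL inequality and closedness of $\partial F$ make $x^*$ a minimizer. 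Since $g_{k+1}\leq 2\overline{\lambda}r_k$, the sequence $g_k$ also converges linearly to $0$. When $\lambda_k\to0$, the superlinear rate of $F_k$ transfers to $r_k$, hence to the tail sums and to $g_k$.

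\textbf{Main obstacle.} I expect the hardest step to be the lower control of $\lambda_k$ needed in part (ii), that is, why $p<1$ prevents the iterate steps from being too long relative to $F_k$. My first attempt will be the model-comparison bound $r_k\lesssim g_k/\lambda_k$ together with $\lambda_k\gtrsim g_k^p$, giving $r_k\lesssim g_k^{1-p}$. I would then combine this with $F$-linear decay via $g_k\leq 2\overline{\lambda}r_{k-1}\leq 4\overline{\lambda}\sqrt{F_{k-1}/\lambda_{k-1}}$, and resolve the resulting recursion.
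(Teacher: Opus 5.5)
Your part (i) is exactly the paper's argument: you derive $g_{k+1}\le 2\lambda_k r_k$, combine it with the second acceptance condition and PL, and use $\lambda_k\le\overline{\lambda}$.

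Part (ii) has a genuine gap. Your plan needs a lower bound $\lambda_k\ge c\min\{L,\Lambda_0 g_k^p\}$, and that bound is false. $\Lambda_k$ is divided by $4$ at every iteration with $j_k=0$, and immediate acceptance can happen at every iteration however small the trial $\lambda$ is. Take $\HH=\mathbb{R}$, $f(x)=\frac a2 x^2$, $\psi\equiv 0$, $H\equiv a$, $L=2a$. All hypotheses of (ii) hold, and both conditions in \eqref{eq:stop_cond_alg} are satisfied for every $\lambda>0$. Hence $\Lambda_k=4^{-k}\Lambda_0$ and $\lambda_k=4^{-k}\Lambda_0 g_k^p$, so the bound $\Lambda_k\ge 4^{-k}\Lambda_0$ you dismissed as too weak is actually sharp.

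The companion estimate $r_k\lesssim g_k/\lambda_k$ has a similar problem. Without convexity it needs $H(x_{\pi(k)})+\lambda_k\mathcal{R}$ to be uniformly positive, i.e.\ $\lambda_k$ bounded away above $L/2$, which is not the regime you need. As a result, $r_k\lesssim g_k^{1-p}$, the estimate $g_k\le 4\overline{\lambda}\sqrt{F_{k-1}/\lambda_{k-1}}$, and the hoped-for bound $r_k\lesssim F_k^{\theta/2}$ are all unsupported. Summability of $r_k$ is therefore not established.

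The missing idea is that you only need a lower bound on the \emph{ratio} $\lambda_k/\lambda_{k-1}$, not on $\lambda_k$ itself. Lines 4 and 9 of the algorithm give
$\lambda_k\ge \Lambda_k g_k^p=\frac{\lambda_{k-1}}{4}\bigl(\frac{g_k}{g_{k-1}}\bigr)^p\ge \frac{\lambda_{k-1}g_k^p}{4C^p}$,
where $C$ bounds $g_k$ (e.g.\ $C=\max\{g_0,2\overline{\lambda}D_0\}$). The paper feeds this into a \L{}ojasiewicz-type step. Fix $\sigma>1$ and combine:
\begin{itemize}
\item concavity of $t\mapsto t^{1/\sigma}$;
\item the second acceptance condition;
\item PL at $x_k$ in the form $F_k^{1-1/\sigma}\le (g_k^2/2\mu)^{1-1/\sigma}$;
\item $g_k\le 2\lambda_{k-1}r_{k-1}$.
\end{itemize}
Together these give
\[
F_k^{1/\sigma}-F_{k+1}^{1/\sigma}\ \ge\ \frac{(2\mu)^{1-1/\sigma}\,\lambda_k\, r_k^2}{8\sigma\, g_k^{(\sigma-2)/\sigma}\,\lambda_{k-1}\, r_{k-1}} .
\]
Choose $\sigma=2/(1-p)$, which is finite exactly because $p<1$. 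Then the factor $g_k^p$ from the ratio bound cancels $g_k^{(\sigma-2)/\sigma}$, and $\lambda_{k-1}$ cancels as well. This leaves $r_k^2\le C_1 r_{k-1}\bigl(F_k^{1/\sigma}-F_{k+1}^{1/\sigma}\bigr)$ for a constant $C_1$ depending on $\mu,\sigma,C,p$.

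From there, $2\sqrt{ab}\le a+b$ gives $r_k\le \frac{r_{k-1}}{2}+\frac{C_1}{2}\bigl(F_k^{1/\sigma}-F_{k+1}^{1/\sigma}\bigr)$, and telescoping gives finite length. Using $r_{k-1}\le D_0$ yields the tail bound $\|x_{l+1}-x^*\|\lesssim F_l^{1/(2\sigma)}+F_l^{1/\sigma}$. By part (i) this is linear, and superlinear when $\lambda_k\to 0$. Your remaining steps then go through as in the paper: minimality of $x^*$, and linear/superlinear decay of $g_k\le 2\lambda_{k-1}\bigl(\|x_{k-1}-x^*\|+\|x_k-x^*\|\bigr)$.
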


\subsection{Convex problems}
\label{sec:global_convex_functions}
In this subsection, we focus on the setting where $F$ in \eqref{eq:opt} is convex, which allows for faster sublinear rates. The result is relatively simple, and we keep it for completeness. %
We assume the sublevel set $\F_0$ to be bounded, i.e., \eqref{eq:bounded_sublevel_set} holds. %
The main result of this subsection is as follows. Note that all the results in \cref{thm:global_convergence_sublinear_nonconv} hold also in this case.

\begin{restatable}{theorem}{thmglobalconvergencesublinear}
\label{thm:global_convergence_sublinear}
Let  \cref{ass:basic} hold  %
and additionally $F$  have a bounded sublevel set $\mathcal{F}_0$ and be convex. 
Then, 
\begin{equation}
    \label{eq:conv_rate}
        F(x_k) - F^* \leq g_0D_0\exp\left(-\frac{k}{4}\right) + \frac{32\overline{\lambda}D_0^2 }{  k}, \quad k \geq 0.
\end{equation}
If in addition $\lambda_k \to 0$ as $k\to \infty$ (which holds, e.g., if $f$ is locally $C^2$, $F$ is strictly convex, $H:=\nabla^2 f(x)$, and $\dim(\HH) <  \infty$), we obtain that $F(x_k)-F^*=\so(1\slash k)$. 
\end{restatable}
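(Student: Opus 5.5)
We will combine the two acceptance conditions \eqref{eq:stop_cond_alg} with convexity and the bounded sublevel set. The aim is a recursion for $F_k$ that has a linear-contraction regime while $F_k$ is large relative to $\overline{\lambda} D_0^2$ and a $1/k$ regime otherwise.

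First, by \cref{thm:global_convergence_sublinear_nonconv}, all iterates stay in $\F_0$, so $\norm{x_{k+1}-x^*}{}\le D_0$ for any $x^*\in S$. Convexity of $F$ and $F'(x_{k+1})\in\partial F(x_{k+1})$ then give
\begin{equation*}
F_{k+1}\le \langle F'(x_{k+1}),x_{k+1}-x^*\rangle\le g_{k+1}D_0 .
\end{equation*}
Next we extract a decrease inequality. The first acceptance condition with Cauchy--Schwarz gives $g_{k+1}^2/(2\lambda_k)\le g_{k+1} r_k$, so $g_{k+1}\le 2\lambda_k r_k$. The second condition gives $F_k-F_{k+1}\ge \frac{\lambda_k}{4}r_k^2\ge \frac{g_{k+1}^2}{16\lambda_k}$. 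Combining these with $\lambda_k\le\overline{\lambda}$ from \cref{lem:acceptance_inner_nonconv} yields
\begin{equation*}
F_k-F_{k+1}\ge \frac{F_{k+1}^2}{16\overline{\lambda}D_0^2}.
\end{equation*}
For the exponential term we also use the first condition directly. Since $\langle F'(x_{k+1}),x_k-x_{k+1}\rangle\le F_k-F_{k+1}$ by convexity (subgradient inequality at $x_{k+1}$), we get $F_k-F_{k+1}\ge g_{k+1}^2/(2\lambda_k)$. This is the same type of bound, so the linear phase has to come from elsewhere.

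We split into phases. While $F_{k+1}\ge 16\overline{\lambda}D_0^2$ (the large-gap regime), the recursion gives $F_k\ge F_{k+1}+F_{k+1}\cdot F_{k+1}/(16\overline{\lambda}D_0^2)\ge 2F_{k+1}$, so $F$ at least halves. Hence $F_k\le 2^{-k}F_0\le e^{-k/4}F_0$, and with $F_0\le g_0D_0$ (convexity at $x_0$ with the given $\psi'(x_0)$) this produces the first term. Once $F_{k_0}\le 16\overline{\lambda}D_0^2$, the standard argument on $1/F$ applies. Dividing the recursion by $F_kF_{k+1}$ and using $F_{k+1}\le F_k$ gives
\begin{equation*}
\frac1{F_{k+1}}-\frac1{F_k}\ge \frac{F_{k+1}}{16\overline{\lambda}D_0^2F_k},
\end{equation*}
so we need $F_{k+1}/F_k\ge 1/2$ to get a constant increment $1/(32\overline{\lambda}D_0^2)$. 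Steps with $F_{k+1}<F_k/2$ only help, since then $1/F$ at least doubles.

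A case analysis then gives $1/F_k\ge (k-k_0)/(32\overline{\lambda}D_0^2)$ after the phase switch, and combining the two phases (bounding $k_0$ against $k/2$ or absorbing into the sum) yields \eqref{eq:conv_rate}. I expect the main obstacle to be the bookkeeping of this phase combination so that the constants come out exactly as $e^{-k/4}$ and $32$. The natural route is to treat the cases $k_0\ge k/2$ and $k_0<k/2$ separately; in the latter case the bound $1/F_k\ge (k/2)/(32\overline\lambda D_0^2)$ loses a factor of 2, so the thresholds may need retuning.

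For the $\so(1/k)$ claim, replace $\overline{\lambda}$ by $\sup_{i\ge K}\lambda_i\to0$ in the telescoped recursion from a large index $K$ onward. This gives $1/F_k\ge (k-K)/(32D_0^2\sup_{i\ge K}\lambda_i)$, and hence $kF_k\to0$. The parenthetical sufficient condition for $\lambda_k\to0$ is taken from the later local analysis rather than proved here.
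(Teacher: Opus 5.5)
You have the right recursion, $F_k-F_{k+1}\ge F_{k+1}^2/(16\overline{\lambda}D_0^2)$, but there is a genuine gap in combining the phases. That step is exactly where the constants $e^{-k/4}$ and $32$ come from, and the facts you plan to carry out of the two phases are too lossy to produce them. Write $c:=16\overline{\lambda}D_0^2$ and normalise $c=1$. Your argument only retains three facts:
\begin{itemize}
\item $F_0\le g_0D_0$;
\item $F_{j+1}\le F_j/2$ whenever $F_{j+1}\ge c$;
\item $1/F_{j+1}-1/F_j\ge 1/(2c)$ once $F_{k_0}\le c$.
\end{itemize}
Now take $F_0=g_0D_0=517$, $F_j=517\cdot 2^{-j}$ for $j\le 10$ (so $k_0=10$), and $1/F_{j+1}=1/F_j+1/2$ for $j\ge 10$. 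This sequence satisfies all three facts. Yet $F_{50}=1/(1/F_{10}+20)\approx 0.0455$, while the right-hand side of \eqref{eq:conv_rate} is $517e^{-12.5}+2/50\approx 0.0419$. So no case split ($k_0\ge k/2$ versus $k_0<k/2$) or retuning of this bookkeeping can yield the stated bound. As you suspected, the plan ends at something like $64\overline{\lambda}D_0^2/k$.

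The missing idea is to keep the ratio $F_{j+1}/F_j$ rather than threshold it. You already derived
\[
\frac{1}{F_{j+1}}-\frac{1}{F_j}\ \ge\ \frac{1}{16\lambda_jD_0^2}\cdot\frac{F_{j+1}}{F_j}.
\]
Sum over $j=0,\dots,k-1$ and apply AM--GM. The product of the ratios telescopes, giving
\[
\frac{1}{F_k}\ \ge\ \frac{k}{16D_0^2}\Bigl(\prod_{j<k}\lambda_j\Bigr)^{-1/k}\Bigl(\frac{F_k}{F_0}\Bigr)^{1/k}.
\]
Now split into two cases, $\tfrac1k\ln(F_0/F_k)\ge\tfrac12$ or not. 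This gives
\[
F_k\ \le\ F_0e^{-k/2}+\frac{32D_0^2}{k}\Bigl(\prod_{j<k}\lambda_j\Bigr)^{1/k}.
\]
Since $F_0\le g_0D_0$ and $\lambda_j\le\overline{\lambda}$, this implies \eqref{eq:conv_rate}. It also gives the $\so(1/k)$ claim directly, because the geometric mean of a sequence $\lambda_j\to0$ tends to $0$.

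The paper does the same thing, but it telescopes $(g_{j+1}/g_j)^2$ using $F_j\le g_jD_0$ at the current index. That is why it ends with $g_0D_0e^{-k/4}$; your $F$-ratio version is in fact slightly sharper.

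Two smaller points.
\begin{itemize}
\item In your restart argument for $\so(1/k)$, the inequality $1/F_k\ge (k-K)/(32D_0^2\sup_{i\ge K}\lambda_i)$ does not follow. $F_K$ may exceed the new threshold $16D_0^2\sup_{i\ge K}\lambda_i$, which tends to $0$, so a halving transient reappears. This is harmless for $\limsup_k kF_k$, but it must be accounted for.
\item For the parenthetical sufficient condition, the later $C^2$ argument presupposes $x_k\to x^*$. Here that convergence must first be obtained from compactness of $\F_0$, $g_k\to0$, and uniqueness of the minimiser under strict convexity.
\end{itemize}
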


\section{Superlinear convergence}
\label{sec:fast}
In this section, we show that in addition to the global nonasymptotic rates proved above, \cref{alg:proximal_newton} has 
improved asymptotic rates when approaching a solution, under the semismoothness of $f'$ at this solution. 
To that end, we assume in this section that the convergence $x_k \to x^*$ to some local minimum $x^*$ is already established. This can be ensured, for example, by \cref{thm:global_convergence_nonconv_PL}.
Following the literature on semismooth Newton methods, we also use some local nondegeneracy in the form of the next assumption.
\begin{ass}\label{ass:strong_convexity}
    $F$ is locally $\mu$-strongly convex around $x^*$ (which is the limit of $x_k$), i.e., there exists a ball $B_R(x^*)$ such that $\forall x,y \in B_R(x^*)$%
    \begin{align*}
        F(y) \geq F(x) + \langle F'(x), y-x\rangle &+ \frac{\mu}{2}\norm{y-x}{}^2.
    \end{align*}
    In addition, for sufficiently large $k$, $H(x_k) \succeq 0$. 
\end{ass}
Since in this section we are focused on asymptotic results, it suffices for all the conditions to hold for sufficiently large $k$. W.l.o.g., passing to a maximum over individual conditions, we use $K$ to denote the iteration number starting from which the made assumptions hold. In particular, by the above assumption $x_k\in B_R(x^*)$ for $k\geq K$.

\subsection{Technical preliminaries}
\label{sec:further_tools}
A crucial step in establishing asymptotic convergence properties of \cref{alg:proximal_newton} is to refine the upper bound \eqref{eq:lambda_bound} on the regularization parameter $\lambda_k$ in such a way that we are able to prove that $\lambda_k\to0$ as $k\to\infty$ under the semismoothness assumption. 
Let us define, recalling the notation \eqref{eq:F_r_g_notaion}, %
\begin{align*}
&M(x_{k,+}, x_k)\\
&\quad:= f'(x_{k,+}) - f'(x_{k}) - H(x_{\pi(k)})(x_{k, +} - x_k).
\end{align*}
Under \cref{ass:basic} it is clear that 
\begin{equation}
\label{eq:gradient_variation}
    \norm{M(x_{k,+}, x_k)}{*}\leq \eta_k \norm{x_{k,+} - x_k}{}
\end{equation}
with $\eta_k=L$, the same bound that is used to prove \eqref{eq:lambda_bound}. If $f'$ is additionally semismooth, then we may expect that $\eta_k\to0$ as $k\to\infty$, which will give us the refined version of \eqref{eq:lambda_bound} with $\lambda_k\to0$. Thus, in the next result we obtain a (even more general) relationship between $\eta_k$ and $\lambda_k$.

\begin{restatable}{prop}{proplambdaktozero}
\label{prop:lambda_k_to_zero}
Let \cref{ass:strong_convexity} hold, $x_k\to x^*$, and suppose that there exist $\theta \in[0,1]$ and a sequence  $(\eta_k)_{k \geq K} \subset \R_+$ which is bounded by $C\geq 0$  s.t. for $k\geq K$,   
\begin{align}\label{eq:eps_condition_1}
\norm{ M(x_{k,+}, x_k)  }{*} 
&\leq \eta_k \norm{x_k - x_{k,+}}{}^{1+\theta}.
\end{align}
Then, for $k\geq K$, we have 
\begin{align}
        \lambda_k \leq
        \begin{cases}
            4 \left(\sqrt{2}\eta_k\right)^{\frac{1}{1+\theta}} g_k^{\frac{\theta}{1+\theta}} &: j_k > 0,\\
            \frac{\lambda_{k-1}}{2} &: j_k = 0.
         \end{cases}\label{eq:lambda_k_iterative_estimate}
\end{align}
 If, in addition, we assume $p \in [1/2,1]$ and
\begin{equation}
\label{eq:Lambda_0_assumpt}
    \Lambda_{K} \leq \left(\sqrt{2}C\right)^{\frac{1}{1+\theta}} \left(\frac{1}{g_K}\right)^{p-\frac{\theta}{1+\theta}},
\end{equation}
then, for $k\geq K$,
\begin{align}
    \Lambda_{k+1} &\leq \left(\sqrt{2}C\right)^{\frac{1}{1+\theta}} \left(\frac{1}{g_k}\right)^{p-\frac{\theta}{1+\theta}},\label{eq:Lambda_k_rate_bound} \\
    \lambda_k &\leq 4 \left(\sqrt{2}C\right)^{\frac{1}{1+\theta}} g_k^{\frac{\theta}{1+\theta}}\label{eq:lambda_k_rate_bound}.
\end{align}
\end{restatable}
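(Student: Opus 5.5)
The plan is to analyse the last rejected trial in the inner loop. If $j_k>0$, the trial with $\lambda = \lambda_k/4$ (the point $x_{k,+}=x_+(\lambda_k/4,x_k)$) was rejected, so one of the two conditions in \eqref{eq:stop_cond_alg} fails for $\lambda_k/4$. I will show that either condition holds as soon as $\lambda/4$-regularization dominates the residual $M$. This yields a lower bound on $\lambda_k/4$ of the form $\lambda_k/4 \lesssim \eta_k r_{k,+}^{\theta}$. I then convert $r_{k,+}^\theta$ into a power of $g_k$. If $j_k=0$, then $\lambda_k=\Lambda_k g_k^p$ with $\Lambda_k=\Lambda_{k-1}4^{j_{k-1}}/4=\lambda_{k-1}/(4g_{k-1}^p)$. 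Hence $\lambda_k=\lambda_{k-1}(g_k/g_{k-1})^p/4$, and it suffices to show $g_k\le 2^{1/p}g_{k-1}$, or more simply $g_k \le g_{k-1}$ up to a constant. I expect to get this from the first acceptance condition at step $k-1$ together with local strong convexity (\cref{ass:strong_convexity}).

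\textbf{Case $j_k>0$.} Write $\lambda=\lambda_k/4$, $x_+=x_{k,+}$, $r=r_{k,+}$. By \eqref{eq:F_div_def}, $F'(x_+)=M-\lambda\mathcal{R}(x_+-x_k)$. Then
\[
\langle F'(x_+),x_k-x_+\rangle = \lambda r^2+\langle M,x_k-x_+\rangle
\quad\text{and}\quad
\|F'(x_+)\|_*^2 = \lambda^2r^2 + 2\lambda\langle M, x_k-x_+\rangle+\|M\|_*^2 .
\]
The first condition is therefore equivalent to $\tfrac{\lambda}{2} r^2 \ge \tfrac{1}{2\lambda}\|M\|_*^2$, which holds iff $\|M\|_*\le\lambda r$. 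For the second condition I will use the convexity of the model. Since $H\succeq 0$ and $\psi$ is convex, the stationarity of $x_+$ gives
\[
F(x_k)\ge f(x_k)+\langle f'(x_k),x_+-x_k\rangle+\tfrac12\langle H(x_+-x_k),x_+-x_k\rangle+\lambda r^2+\psi(x_+).
\]
Together with $f(x_+)\le f(x_k)+\langle f'(x_k),x_+-x_k\rangle+\tfrac12\langle H(x_+-x_k),x_+-x_k\rangle+\int_0^1\langle M(x_k+t(x_+-x_k),x_k),x_+-x_k\rangle dt$, this should give $F(x_k)-F(x_+)\ge \lambda r^2 - \|M\|_* r$ roughly. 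The second condition therefore holds when $\|M\|_*\le \tfrac34\lambda r$.

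The integral over intermediate points is the main obstacle, since \eqref{eq:eps_condition_1} only controls $M$ at $x_{k,+}$. I would try first to avoid it by a different route. Using local strong convexity of $F$ with subgradient $F'(x_+)$ gives $F(x_k)-F(x_+)\ge\langle F'(x_+),x_k-x_+\rangle + \tfrac{\mu}{2} r^2 \ge \lambda r^2 - \|M\|_* r$. This uses only $M$ at $x_{k,+}$, and I adopt it. Rejection thus forces $\|M\|_*>\lambda r/\sqrt2$ or similar, so $\lambda_k/4<\sqrt2\,\eta_k r^{\theta}$.

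Next I bound $r$ by $g_k$. The point $x_+$ minimizes a $\lambda$-strongly convex model whose subgradient at $x_k$ is $F'(x_k)$, using $\psi'(x_k)\in\partial\psi(x_k)$. Strong monotonicity then gives $(\lambda)r\le g_k$, that is, $r\le g_k/\lambda = 4g_k/\lambda_k$. Substituting gives $\lambda_k\le 4\sqrt2\eta_k(4g_k/\lambda_k)^\theta$, so $\lambda_k^{1+\theta}\le 4^{1+\theta}\sqrt2\,\eta_k g_k^\theta$. This yields the claimed bound $4(\sqrt2\eta_k)^{1/(1+\theta)}g_k^{\theta/(1+\theta)}$.

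\textbf{Case $j_k=0$.} From $\langle F'(x_k),x_{k-1}-x_k\rangle\ge g_k^2/(2\lambda_{k-1})$ and Cauchy--Schwarz we get $g_k\le 2\lambda_{k-1}r_{k-1}$. Again $r_{k-1}\le g_{k-1}/\lambda_{k-1}$ by the argument above, so $g_k\le 2g_{k-1}$. For $p\ge1/2$ this gives $(g_k/g_{k-1})^p\le 2$ at worst, and hence $\lambda_k\le\lambda_{k-1}/2$.

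\textbf{Second part.} I proceed by induction on $k\ge K$, using the update $\Lambda_{k+1}=\lambda_k/(4g_k^p)$ together with \eqref{eq:eps_condition_1} with $\eta_k\le C$. If $j_k>0$, the first case gives \eqref{eq:lambda_k_rate_bound} directly, and dividing by $4g_k^p$ gives \eqref{eq:Lambda_k_rate_bound}. If $j_k=0$, then $\lambda_k=\Lambda_k g_k^p$. By the induction hypothesis (or \eqref{eq:Lambda_0_assumpt} when $k=K$), $\Lambda_k\le (\sqrt2C)^{1/(1+\theta)}g_{k-1}^{-(p-\theta/(1+\theta))}$. Since $p-\theta/(1+\theta)\ge0$, I need $g_k/g_{k-1}$ bounded by a constant, which the bound $g_k\le2g_{k-1}$ above supplies. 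The exponent bookkeeping should absorb the factor into the constant $4$ and yield $\lambda_k\le4(\sqrt2C)^{1/(1+\theta)}g_k^{\theta/(1+\theta)}$, and hence \eqref{eq:Lambda_k_rate_bound}.
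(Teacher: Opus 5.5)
Your proposal is correct and follows the paper's proof essentially step by step: you examine the rejected trial $\lambda_k/4$ and show both acceptance tests pass once $\|M(x_{k,+},x_k)\|_*$ is below a constant multiple of $\frac{\lambda_k}{4}r_{k,+}$. Your exact reformulation of the first test as $\|M\|_*\le\lambda r$ and the Cauchy--Schwarz bound $F(x_k)-F(x_{k,+})\ge\lambda r^2-\|M\|_*r$ are a slightly sharper variant of the paper's squared-norm expansion; you then combine this with $r_{k,+}\le 4g_k/\lambda_k$, use $g_k\le 2g_{k-1}$ when $j_k=0$, and run the same induction via $\Lambda_{k+1}=\lambda_k/(4g_k^p)$. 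Two small points: invoking local (strong) convexity at $x_{k,+}$ needs $x_{k,+}\in B_R(x^*)$, which follows from $r_{k,+}\le 4r_k\to 0$ as in \cref{lem:strange_convergence}, and the qualifier $p\ge 1/2$ in the $j_k=0$ case is unnecessary since $2^p\le 2$ for every $p\in[0,1]$.
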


Assumption \eqref{eq:Lambda_0_assumpt} is nonrestrictive since it can be guaranteed at iteration $k=0$ by a burn-in phase of logarithmic length, see \cite{doikov2024super}. Thus, for the rest of this section, we assume that it is satisfied.

We can observe from \eqref{eq:lambda_k_iterative_estimate} that if $\theta =0$ and $\eta_k\to0$ as $k\to\infty$, then also $\lambda_k\to0$ as $k\to\infty$. This will be our key step to obtain improved rates under semismoothness. At the same time, \eqref{eq:lambda_k_rate_bound} provides a rate with which $\lambda_k$ goes to 0 depending on %
$F'(x_k)$, which will allow us to quantify the rate of superlinear convergence under $\gamma$-order semismoothness.

\subsection{Superlinear convergence under semismoothness}
We are now in a position to present our next main result on superlinear convergence of \cref{alg:proximal_newton} under the following semismoothness assumption, which is standard in the literature on semismooth methods \cite{ulbrich2011semismooth,facchinei2003finite,hintermuller2010semismooth}, see also the discussion in \cref{S:Newton_derivative}. 
\begin{ass}\label{ass:semismoothness}
$f'$ is semismooth at $x^*$ with respect to $H$, i.e., as $d\to 0$,
\begin{equation}
\norm{f'(x^*+d)-f'(x^*)-H(x^*+d)d}{*} = \so(\norm{d}{}).\label{eq:semismooth_at_x*}
\end{equation}
\end{ass}
This assumption is often satisfied in ML, e.g., in learning with Huber loss, logistic regression with capped loss or training NNs with piecewise-smooth activations such as squared ReLU, Huberized ReLU, softplus with truncation.

\begin{restatable}[Superlinear convergence under semismoothness]{theorem}{thmfastlocal}
\label{thm:fast_local}  %
Let \cref{ass:basic,ass:strong_convexity,ass:semismoothness} hold,  $x_k \to x^*$, and, as $k \to \infty$,  
          \begin{align}
&\norm{(H(x_{k,+}) - H(x_k))(x_{k,+} - x^*)}{*}  = \so(\norm{ x_{k,+} - x_{k}}{}),\label{eq:DM_condition}\\
 &\norm{(H(x_k)-H(x_{\pi(k)}))(x_{k,+} - x_k)}{*} = \so(\| x_{k,+} -x_k\|).
        \label{eq:lazy_DM_condition}
        \end{align}
        Then,  as $k \to \infty$, we have $\lambda_k\to0$ and, for $k$ sufficiently large,%
        \begin{align*}
         \|x_{k+1} - x^*\| &\leq  \frac{4\lambda_{k}}{ \mu}  \|x_k -x^*\| = \so(\|x_k -x^*\|),\\
         \norm{ F'(x_{k+1})}{*} &\leq  \frac{4\lambda_k}{\mu} \norm{F'(x_k)}{*} = \so(\norm{F'(x_k)}{*}).%
        \end{align*}
\end{restatable}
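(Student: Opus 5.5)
Our plan has two parts. First we show $\lambda_k\to0$ through \cref{prop:lambda_k_to_zero} with $\theta=0$. Then we derive the contraction estimates from the optimality condition \eqref{eq:PLMSN_step_optimality}, local strong convexity (\cref{ass:strong_convexity}), and the acceptance conditions \eqref{eq:stop_cond_alg}.

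\emph{Step 1: $\eta_k\to0$.} We split the residual at $x^*$:
\begin{align*}
M(x_{k,+},x_k) &= \bigl[f'(x_{k,+})-f'(x^*)-H(x_{k,+})(x_{k,+}-x^*)\bigr] - \bigl[f'(x_k)-f'(x^*)-H(x_k)(x_k-x^*)\bigr]\\
&\quad + (H(x_{k,+})-H(x_k))(x_{k,+}-x^*) + (H(x_k)-H(x_{\pi(k)}))(x_{k,+}-x_k).
\end{align*}
The two bracketed terms are $\so(\|x_{k,+}-x^*\|)$ and $\so(\|x_k-x^*\|)$ by \cref{ass:semismoothness}, since $x_k\to x^*$ and $x_{k,+}\to x^*$. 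The second convergence holds because $r_{k,+}\to0$: the trial step with $\lambda_k/4$ is a prox step, and its length is controlled by $g_k\to0$ together with $H\succeq0$ locally. The last two terms are handled directly by \eqref{eq:DM_condition} and \eqref{eq:lazy_DM_condition}.

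To convert $\so(\|x_k-x^*\|)$ into $\so(\|x_{k,+}-x_k\|)$, we need $\|x_k-x^*\|\lesssim\|x_{k,+}-x_k\|$. For this we use strong convexity of $F$ on $B_R(x^*)$, which gives $\mu\|x_{k,+}-x^*\|\le \|F'(x_{k,+})\|_*$. By \eqref{eq:F_div_def}, $\|F'(x_{k,+})\|_*\le (\eta_k+\lambda_k/4)\|x_{k,+}-x_k\|$, and $\eta_k\le L$ by \eqref{eq:gradient_variation}. The triangle inequality then yields $\|x_k-x^*\|\le(1+(L+\overline\lambda)/\mu)\|x_{k,+}-x_k\|$. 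Hence \eqref{eq:eps_condition_1} holds with $\theta=0$ and $\eta_k\to0$.

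\emph{Step 2: $\lambda_k\to0$.} We apply \eqref{eq:lambda_k_iterative_estimate}. When $j_k>0$ we get $\lambda_k\le 4\sqrt2\,\eta_k$. When $j_k=0$ we get $\lambda_k\le\lambda_{k-1}/2$. Consider any maximal run of $j=0$ steps: $\lambda$ halves along it and restarts from a value $\le4\sqrt2\eta$ at the last $j>0$ step. Therefore $\lambda_k\le\max\{4\sqrt2\sup_{i\ge K'}\eta_i,\ 2^{-(k-K')}\overline\lambda\}$, which tends to $0$.

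\emph{Step 3: the rates.} Write $x_{k+1}$ for the accepted point, with parameter $\lambda_k$. Let $M_k$ be the residual $M$ evaluated at the accepted step (same decomposition). By \eqref{eq:F_div_def},
\[
\mu\|x_{k+1}-x^*\|\le\|F'(x_{k+1})\|_*\le \|M_k\|_*+\lambda_k r_k .
\]
For the gradient bound, the first acceptance condition together with Cauchy--Schwarz gives $\|F'(x_{k+1})\|_*\le 2\lambda_k r_k$. We then bound $r_k$ by $\|x_k-x^*\|$, respectively by $g_k$, up to a factor of about $2/\mu$. Monotonicity of the regularized subproblem should give this: strong monotonicity of $\partial\psi+H+\lambda\mathcal R$, combined with local strong convexity, yields $\mu r_k\lesssim g_k$ and $r_k\le 2\|x_k-x^*\|$, since $x_{k+1}$ lands much closer to $x^*$ than $x_k$. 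This produces $\|F'(x_{k+1})\|_*\le\frac{4\lambda_k}{\mu}g_k$. The main obstacle is the iterate estimate, where the constant $4/\mu$ must absorb $\|M_k\|_*$. To handle it, we intend to compare with the trial point through the acceptance structure. Alternatively, we will use $F'(x_{k+1})\in\partial F(x_{k+1})$ and the identity $\mu\|x_{k+1}-x^*\|^2\le\langle F'(x_{k+1}),x_{k+1}-x^*\rangle$, and then insert the optimality condition so that the $\so$ terms are dominated by $\lambda_k$ for large $k$. If this domination fails, we will fall back to proving the $\so(\|x_k-x^*\|)$ statement directly from Steps 1–2.
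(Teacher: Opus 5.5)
Your Steps~1 and~2 follow the paper's route. You use the same four-term splitting of $M(x_{k,+},x_k)$, Proposition~\ref{prop:lambda_k_to_zero} with $\theta=0$, and the halving argument, which is the paper's Lemma~\ref{lem:helper_lambdak_zero}. Your bound $\|x_k-x^*\|\lesssim r_{k,+}$ goes through the error bound $\mu\|x_{k,+}-x^*\|\le\|F'(x_{k,+})\|_*\le (L+\lambda_k/4)\,r_{k,+}$. That is a valid and slightly more direct substitute for Lemma~\ref{lem:useful_bounds_new_strong}. It does need $x_{k,+}\in B_R(x^*)$, however, and your reason for $x_{k,+}\to x^*$ is insufficient. $H\succeq 0$ and \eqref{eq:rk_leq_gk_over_lambda_k_new} only give $r_{k,+}\le 4g_k/\lambda_k$, which need not tend to zero once $\lambda_k\to 0$. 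For example, with a strongly convex quadratic, exact Hessian and $\psi\equiv 0$, every trial is accepted. Then $\Lambda_k=4^{-k}\Lambda_0$, and for $p=1$ one gets $g_k/\lambda_k=4^k/\Lambda_0$. Compare with the accepted step instead: Lemma~\ref{lem:lambda_dependence} gives $\|x_{k,+}-x_{k+1}\|\le\frac34 r_{k,+}$, hence $r_{k,+}\le 4r_k\to 0$.

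The genuine gap is in Step~3: the contraction with the explicit factor $4\lambda_k/\mu$ is never derived.

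The route $\mu\|x_{k+1}-x^*\|\le\|M_k\|_*+\lambda_k r_k$ cannot deliver it. The hypotheses \eqref{eq:DM_condition} and \eqref{eq:lazy_DM_condition} concern the trial point $x_{k,+}$, not $x_{k+1}$, so $\|M_k\|_*$ is not controlled. Even if $\|M_k\|_*=\so(r_k)$, nothing forces that $\so(1)$ factor below $\lambda_k$, which may vanish faster. Your fallback would then give only the $\so$ statement, not the claimed inequality.

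For the gradient bound, the monotonicity modulus of $\partial\psi+H+\lambda_k\mathcal{R}$ is $\lambda_k$, not $\mu$, because $H$ is only positive semidefinite. It yields $r_k\le g_k/\lambda_k$ and hence only $g_{k+1}\le 2g_k$. Also, $r_k\le 2\|x_k-x^*\|$ presupposes the very contraction you are trying to prove.

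The missing step uses only ingredients you already wrote down. The first acceptance condition gives $g_{k+1}\le 2\lambda_k r_k$, and this is exactly what makes $M_k$ irrelevant. Local strong convexity gives $\mu\|x-x^*\|\le\|F'(x)\|_*$ at $x=x_k$ and $x=x_{k+1}$. Hence
\[
\mu\|x_{k+1}-x^*\|\le g_{k+1}\le 2\lambda_k\bigl(\|x_{k+1}-x^*\|+\|x_k-x^*\|\bigr),
\qquad
g_{k+1}\le 2\lambda_k r_k\le \frac{2\lambda_k}{\mu}\bigl(g_{k+1}+g_k\bigr).
\]
Once $2\lambda_k/\mu\le\frac12$, which holds eventually by Step~2, move the $(k+1)$-terms to the left. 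This gives both inequalities with factor $4\lambda_k/\mu$, which is the content of the paper's Lemma~\ref{lem:iterates_g_k_and_lambda_k}.
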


Conditions \eqref{eq:DM_condition} and \eqref{eq:lazy_DM_condition} are in the spirit of the celebrated Dennis--Moré (DM) condition \cite{MR343581}, which essentially gives a necessary and sufficient condition for superlinear convergence of quasi-Newton methods for smooth functions (in the nonsmooth setting, we refer to, e.g., \cite{cibulka2015inexact, MR3023752}). Similar conditions are used in previous works on SSNs \cite{potzl2022second,potzl2024inexact} to show superlinear convergence. The next lemma provides a sufficient condition for \eqref{eq:DM_condition} and \eqref{eq:lazy_DM_condition}.

\begin{restatable}{lem}{lemlazyDMHolds}
\label{lm:lemlazyDMHolds}
        Let \cref{ass:strong_convexity} hold and $x_k \to x^*$. If $H$ is continuous, then  \eqref{eq:DM_condition} and \eqref{eq:lazy_DM_condition} hold.
\end{restatable}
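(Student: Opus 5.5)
The plan is to reduce both conditions to the operator-norm continuity of $H$ at $x^*$, combined with an estimate saying that $x_{k,+}$ converges to $x^*$ at a rate controlled by the step length $r_{k,+}=\norm{x_{k,+}-x_k}{}$. I will read ``$H$ is continuous'' as continuity in the operator norm \eqref{eq:bounded_Hessian}.

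\textbf{Condition \eqref{eq:lazy_DM_condition}.} This is the easy part. Since $\pi(k)=k-k\bmod \lzsymb$, we have $k-\lzsymb<\pi(k)\le k$, so $\pi(k)\to\infty$ and hence $x_{\pi(k)}\to x^*$. Continuity of $H$ at $x^*$ then gives
\[
\norm{H(x_k)-H(x_{\pi(k)})}{\mathrm{op}}\le \norm{H(x_k)-H(x^*)}{\mathrm{op}}+\norm{H(x^*)-H(x_{\pi(k)})}{\mathrm{op}}\to 0.
\]
It follows that $\norm{(H(x_k)-H(x_{\pi(k)}))(x_{k,+}-x_k)}{*}\le \so(1)\, r_{k,+}$, which is \eqref{eq:lazy_DM_condition}.

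\textbf{Condition \eqref{eq:DM_condition}.} Here we bound
\[
\norm{(H(x_{k,+})-H(x_k))(x_{k,+}-x^*)}{*}\le \norm{H(x_{k,+})-H(x_k)}{\mathrm{op}}\,\norm{x_{k,+}-x^*}{}.
\]
Two facts are needed: (a) $x_{k,+}\to x^*$, so that the operator-norm factor is $\so(1)$; and (b) $\norm{x_{k,+}-x^*}{}\le C\, r_{k,+}$ for large $k$.

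Fact (b) follows from (a) by local strong convexity. Once $x_{k,+}\in B_R(x^*)$, \cref{ass:strong_convexity} gives strong monotonicity of $\partial F$ on $B_R(x^*)$. Since $0\in\partial F(x^*)$, this yields $\mu\norm{x_{k,+}-x^*}{}\le \norm{F'(x_{k,+})}{*}$. By \eqref{eq:F_div_def}, the Lipschitz bound on $f'$, \eqref{eq:bounded_Hessian} and \eqref{eq:lambda_bound},
\[
\norm{F'(x_{k,+})}{*}\le \Bigl(\tfrac L2+\tfrac L2+\tfrac{\overline\lambda}{4}\Bigr) r_{k,+}.
\]
Hence $C=(L+\overline\lambda/4)/\mu$ works.

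\textbf{Main obstacle: proving (a).} This amounts to $r_{k,+}\to0$ together with $x_{k,+}\in B_R(x^*)$. A naive proximal estimate does not suffice. Testing the optimality condition \eqref{eq:PLMSN_step_optimality} of $x_{k,+}$ against $-f'(x^*)\in\partial\psi(x^*)$ and using monotonicity of $\partial\psi$ together with $H(x_{\pi(k)})\succeq0$ gives
\[
\tfrac{\lambda_k}{4}\norm{x_{k,+}-x^*}{}^2\le \bigl(L+\tfrac{\lambda_k}{4}\bigr)\norm{x_k-x^*}{}\,\norm{x_{k,+}-x^*}{}.
\]
This is useless if $\lambda_k\to0$. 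The first thing I would try is to use that the model in \eqref{eq:PLMSN_step} is convex (because $H(x_{\pi(k)})\succeq0$) and agrees with $F$ to first order at $x_k$. Its minimizer should then lie close to the minimizer of the locally strongly convex $F$ near $x^*$. Concretely, I would compare model values at $x_{k,+}$ and at $x^*$, and exploit the $\mu$-strong convexity of $F$ on $B_R(x^*)$ to obtain $\norm{x_{k,+}-x_k}{}\lesssim \norm{x_k-x^*}{}+g_k/\mu$. Both terms tend to zero by \cref{thm:global_convergence_sublinear_nonconv}.

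If this comparison fails globally, there is a fallback. A continuity argument in $\lambda$ should show that, for large $k$, the trial points stay inside $B_R(x^*)$. With that localization in hand, (a), then (b), and finally \eqref{eq:DM_condition} follow as above.
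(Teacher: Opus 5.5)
Your treatment of \eqref{eq:lazy_DM_condition} is the paper's. Your derivation of (b) from (a) is also sound: strong monotonicity of $\partial F$ on $B_R(x^*)$ together with the explicit formula \eqref{eq:F_div_def} gives $\|x_{k,+}-x^*\|\le \mu^{-1}(L+\overline{\lambda}/4)\,r_{k,+}$, which is a legitimate substitute for the paper's \eqref{eq:good_bound_1}.

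The genuine gap is (a). You flag it as the main obstacle but do not prove it, and the model-comparison route you sketch does not close it. Comparing model values at $x_{k,+}$ and $x^*$ and transferring to $F$ via the $\tfrac L2$-Lipschitz continuity of $f'$ produces a term $\bigl(\tfrac L4-\tfrac{\lambda_k}{8}\bigr)r_{k,+}^2$ with the wrong sign on the right-hand side. Since $\lambda_k$ may tend to zero, this term cannot in general be absorbed by $\tfrac\mu2\|x_{k,+}-x^*\|^2$; for $\psi\equiv0$ one has $\mu\le L/2$. Moreover, invoking strong convexity at $x_{k,+}$ already presupposes the localization $x_{k,+}\in B_R(x^*)$ that you are trying to prove.

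More fundamentally, no one-step estimate in terms of $x_k$, $x^*$ and $g_k$ alone can work. Take $\psi\equiv0$ and $H\equiv0$, which is admissible under the lemma's hypotheses. Then $r_{k,+}=4g_k/\lambda_k=4r_k$. So the smallness of $r_{k,+}$ must be inherited from the smallness of the accepted step $r_k$, that is, from the hypothesis $x_k\to x^*$, not from the model.

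The missing idea is to compare $x_{k,+}=x_+(\lambda_k/4,x_k,x_{\pi(k)})$ not with $x^*$ but with the accepted point $x_{k+1}=x_+(\lambda_k,x_k,x_{\pi(k)})$. This point solves the same subproblem, only with a larger regularization weight. Subtract the two optimality conditions \eqref{eq:PLMSN_step_optimality} and use monotonicity of $\partial\psi$ together with $H(x_{\pi(k)})\succeq0$. This is \cref{lem:lambda_dependence}, and it gives $\|x_{k,+}-x_{k+1}\|\le\tfrac34 r_{k,+}$. Hence $r_{k,+}\le 4r_k=4\|x_{k+1}-x_k\|\to0$, which is \eqref{eq:r_k+vsr_k} and \cref{lem:strange_convergence}.

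This yields $x_{k,+}\to x^*$ directly, with no prior localization, and the rest of your argument then goes through. Your ``continuity in $\lambda$'' fallback points in this direction. As written, however, it is neither quantified nor anchored at $x_{k+1}$, and $x_{k+1}$ is exactly where the hypothesis $x_k\to x^*$ enters.
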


Combining \cref{thm:global_convergence_nonconv_PL} and \cref{thm:fast_local}, we now have the full convergence picture of \cref{alg:proximal_newton}: global linear convergence under PL condition and acceleration to superlinear convergence under additional local regularity.

\begin{restatable}[Main theorem, semismoothness]{theorem}{thmglobalsuperlinear}
\label{thm:global_superlinear}
Let \cref{ass:basic} hold, $F$ satisfy the Polyak--\L{}ojasiewicz condition \eqref{eq:PL}, the sublevel set $\F_0$ be bounded, and $p < 1$. Then, 
$F(x_k)-F^* \to 0$, $x_k \to x^*$, and $\norm{F'(x_k)}{*} \to 0$ linearly as $k\to \infty$. If in addition the local  conditions of  \cref{ass:strong_convexity,ass:semismoothness}, 
\eqref{eq:DM_condition} and \eqref{eq:lazy_DM_condition} hold, then
the convergences
\begin{align*}
&x_k \to x^*, \quad F(x_k)\to F^* , \quad \norm{F'(x_k)}{*} \to 0 
\end{align*}
are all superlinear. 
\end{restatable}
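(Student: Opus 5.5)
The plan is to assemble \cref{thm:global_superlinear} from \cref{thm:global_convergence_nonconv_PL} and \cref{thm:fast_local}, so that only the transfer of the superlinear rate to the objective values needs a separate argument.

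\textbf{Step 1: linear convergence.} By \cref{thm:global_convergence_nonconv_PL}(i), the PL condition \eqref{eq:PL} gives linear convergence of $F(x_k)-F^*$. Since $\F_0$ is bounded and $p<1$, part (ii) of the same theorem gives strong linear convergence of $x_k$ to a global minimizer $x^*$, and linear convergence of $\norm{F'(x_k)}{*}$ to $0$. This settles the first sentence of the statement.

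\textbf{Step 2: superlinear convergence of iterates and subgradients.} Take $x^*$ to be the limit from Step 1. Then $x_k\to x^*$ holds, and by assumption so do \cref{ass:strong_convexity,ass:semismoothness}, \eqref{eq:DM_condition} and \eqref{eq:lazy_DM_condition}. So all hypotheses of \cref{thm:fast_local} are met. That theorem yields $\lambda_k\to0$ together with
\[
\|x_{k+1}-x^*\|\le \tfrac{4\lambda_k}{\mu}\|x_k-x^*\| \quad\text{and}\quad \norm{F'(x_{k+1})}{*}\le \tfrac{4\lambda_k}{\mu}\norm{F'(x_k)}{*}.
\]
Hence $x_k\to x^*$ and $\norm{F'(x_k)}{*}\to0$ superlinearly.

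\textbf{Step 3: superlinear convergence of objective values.} There are two routes. The first: once $\lambda_k\to0$ is known, \cref{thm:global_convergence_nonconv_PL}(i) already states that $F(x_k)-F^*\to0$ superlinearly. If that clause needs checking, its proof should show $F_{k+1}\le(1-c_k)F_k$ with $c_k=\mu/(\mu+8\lambda_k)\to1$. The second route is direct and uses two sandwich bounds.
\begin{itemize}
\item Upper bound: for $k\ge K$, local strong convexity on $B_R(x^*)$ applied at $x=x_k$, $y=x^*$ gives $F_k\le\langle F'(x_k),x_k-x^*\rangle\le g_k\|x_k-x^*\|$.
\item Lower bound: by PL, $F_k\le g_k^2/(2\mu)$. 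Conversely, strong convexity gives $F_k\ge\frac{\mu}{2}\|x_k-x^*\|^2$, using $0\in\partial F(x^*)$.
\end{itemize}
These bounds give $F_{k+1}\le g_{k+1}\|x_{k+1}-x^*\|=\so(g_k\|x_k-x^*\|)$. This is not immediately $\so(F_k)$, so the comparison between $g_k\|x_k-x^*\|$ and $F_k$ is the delicate point. I would therefore rely on the first route and cite \cref{thm:global_convergence_nonconv_PL}(i) with $\lambda_k\to0$.

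\textbf{Main obstacle.} The difficulty is purely bookkeeping, namely ensuring the $x^*$ in \cref{thm:fast_local} is the same limit as in Step 1. There is no new analysis beyond the cited theorems.
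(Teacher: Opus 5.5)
Your proposal is correct and matches the paper's own proof. Linear rates come from \cref{thm:global_convergence_nonconv_PL}. Then \cref{thm:fast_local}, with $x^*$ the limit from part (ii), gives $\lambda_k\to0$ and superlinear convergence of $x_k$ and $\norm{F'(x_k)}{*}$, and the $\lambda_k\to0$ clause of \cref{thm:global_convergence_nonconv_PL}(i) handles $F(x_k)-F^*$, so you were right to set aside your second route for the objective values.
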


\subsection{Faster rate under $\gamma$-order semismoothness}
In this section, we consider a stronger regularity assumption that is often considered in the literature on SSNs, see e.g. \cite{ulbrich2011semismooth, MR2421297}.%
\begin{ass}\label{ass:gamma_order_semismoothness}
 $f'$ is $\gamma$-order semismooth at $x^*$ for some $\gamma \in (0,1]$, i.e., as $d \to 0$,
\begin{equation}
\norm{f'(x^*+d)-f'(x^*)-H(x^*+d)d}{*} = \mathcal{O}(\norm{d}{}^{1+\gamma}).\label{eq:gamma_order_semismooth_at_x*}
\end{equation}
\end{ass}
Higher-order semismoothness is the key regularity behind accelerated local convergence of semismooth Newton methods, and it dovetails with our gradient regularization framework. It is satisfied by many ML primitives, including piecewise-affine maps such as the ReLU activation, Euclidean projections onto polyhedral sets, and spectral thresholding/projection operators (e.g., projection onto the PSD cone) \cite{sun_sun_2002,cui2021modern}. We further highlight residual maps arising in kernel optimal transport as a recent application \cite{lin_cuturi_jordan_2024} and the classical squared hinge loss in SVM.

This assumption quantifies the r.h.s. of \eqref{eq:semismooth_at_x*}, giving the rate with which it goes to 0. This stronger regularity allows us to improve the result of \cref{thm:fast_local}. Note that \cref{ass:gamma_order_semismoothness}  implies \cref{ass:semismoothness} and thus, \cref{thm:fast_local} is still in force.

\begin{restatable}[Superlinear convergence under $\gamma$-order semismoothness, $\lzsymb=1$]{theorem}{thmfastlocalgamma}
\label{thm:fast_local_gamma}  %
Let \cref{ass:basic,ass:strong_convexity,ass:gamma_order_semismoothness} hold, $x_k \to x^*$, $\lzsymb=1$ (i.e., $\pi(k)=k$), $p \in [1/2,1]$, and as $k \to \infty$,
          \begin{align}
        &\norm{(H(x_{k,+}) - H(x_k))(x_{k,+} - x^*)}{*}\nonumber\\
        &\quad = \mathcal{O}(\norm{ x_{k,+} - x_{k}}{}^{1+\gamma}).\label{eq:gamma_order_DM_condition}
        \end{align}
Then as $k \to \infty$, we have $\lambda_k = \mathcal{O}(g_k^{\frac{\gamma}{1+\gamma}})$ and, for $k$ sufficiently large, 
     \begin{align}
   \|x_{k+1} - x^*\| &= \mathcal{O}(\norm{x_{k-1}-x^*}{}^{\frac{1+2\gamma}{1+\gamma}}), \label{eq:gamma_two_step_rate}\\
            \norm{ F'(x_{k+1})}{*} &\leq \frac{4\lambda_k}{\mu} \norm{F'(x_k)}{*} = \mathcal{O}(\norm{F'(x_k)}{*}^{\frac{1+2\gamma}{1+\gamma}}). \nonumber %
        \end{align}   
    \end{restatable}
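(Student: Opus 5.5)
The plan is to reduce everything to \cref{prop:lambda_k_to_zero} with $\theta=\gamma$ and then reuse the contraction estimates behind \cref{thm:fast_local}.

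\textbf{Step 1: the $\gamma$-order bound on $M$.} With $\lzsymb=1$ we have $H(x_{\pi(k)})=H(x_k)$. I would split
\[
M(x_{k,+},x_k)=\bigl[f'(x_{k,+})-f'(x^*)-H(x_{k,+})(x_{k,+}-x^*)\bigr]-\bigl[f'(x_k)-f'(x^*)-H(x_k)(x_k-x^*)\bigr]+(H(x_{k,+})-H(x_k))(x_{k,+}-x^*).
\]
\cref{ass:gamma_order_semismoothness} bounds the first two brackets by $\mathcal{O}(\|x_{k,+}-x^*\|^{1+\gamma})+\mathcal{O}(\|x_k-x^*\|^{1+\gamma})$. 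Condition \eqref{eq:gamma_order_DM_condition} bounds the third term by $\mathcal{O}(\|x_{k,+}-x_k\|^{1+\gamma})$. This requires $x_{k,+}\to x^*$, which follows from the Lipschitz/strong-convexity bounds on the proximal step.

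The remaining task is to compare $\|x_k-x^*\|$ and $\|x_{k,+}-x^*\|$ with $r_{k,+}=\|x_k-x_{k,+}\|$. I would use local strong convexity (\cref{ass:strong_convexity}) together with $H(x_k)\succeq0$ and the optimality condition \eqref{eq:F_div_def}. These give $\mu\|x_{k,+}-x^*\|\le \|F'(x_{k,+})\|_*\le \|M\|_*+\tfrac{\lambda_k}{4}r_{k,+}$. Combined with $\lambda_k\to0$ from \cref{thm:fast_local}, the triangle inequality then gives $\|x_k-x^*\|\le 2r_{k,+}$ for large $k$. This yields \eqref{eq:eps_condition_1} with $\theta=\gamma$ and bounded $\eta_k\le C$.

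I expect this comparison step to be the main obstacle. It is circular: the bound on $\|x_{k,+}-x^*\|$ itself involves $\|M\|$. I would break the circularity by first using only $\|M\|=\so(r_{k,+})$, which is available from the semismooth analysis, and then absorbing terms.

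\textbf{Step 2: rate of $\lambda_k$ and the gradient recursion.} \cref{prop:lambda_k_to_zero} with $p\in[1/2,1]$ and the burn-in assumption \eqref{eq:Lambda_0_assumpt} gives \eqref{eq:lambda_k_rate_bound}, namely $\lambda_k=\mathcal{O}(g_k^{\gamma/(1+\gamma)})$. The estimate $g_{k+1}\le \frac{4\lambda_k}{\mu}g_k$ is already established in \cref{thm:fast_local}, whose hypotheses hold here. The Dennis--Moré conditions \eqref{eq:DM_condition}, \eqref{eq:lazy_DM_condition} follow from \eqref{eq:gamma_order_DM_condition}, and the lazy one is trivial for $\lzsymb=1$. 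Substituting the rate of $\lambda_k$ gives $g_{k+1}=\mathcal{O}(g_k^{1+\gamma/(1+\gamma)})=\mathcal{O}(g_k^{(1+2\gamma)/(1+\gamma)})$.

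\textbf{Step 3: the iterate rate \eqref{eq:gamma_two_step_rate}.} From \cref{thm:fast_local}, $\|x_{k+1}-x^*\|\le \frac{4\lambda_k}{\mu}\|x_k-x^*\|$. Strong convexity gives $\|x_k-x^*\|\le g_k/\mu$. Since $f'$ is Lipschitz and $\psi'(x_k)$ is defined by the prox step from $x_{k-1}$, I also get $g_k\le(L+\lambda_{k-1})r_{k-1}\lesssim \|x_{k-1}-x^*\|$. Together these give
\[
\lambda_k\lesssim g_k^{\gamma/(1+\gamma)}\lesssim\|x_{k-1}-x^*\|^{\gamma/(1+\gamma)},
\]
while $\|x_k-x^*\|\le\|x_{k-1}-x^*\|$ for large $k$. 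Multiplying these bounds yields $\|x_{k+1}-x^*\|=\mathcal{O}(\|x_{k-1}-x^*\|^{(1+2\gamma)/(1+\gamma)})$. This explains the two-step form: $g_k$ controls $\lambda_k$, but it can only be related to the previous iterate.
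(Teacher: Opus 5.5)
Your proposal is correct and follows the paper's proof essentially step by step: the same three-term splitting of $M(x_{k,+},x_k)$, the reduction to \cref{prop:lambda_k_to_zero} with $\theta=\gamma$, the contraction $g_{k+1}\le \frac{4\lambda_k}{\mu}g_k$, and the two-step iterate estimate through $g_k=\mathcal{O}(\|x_{k-1}-x^*\|)$. The one place you deviate is comparing $\|x_k-x^*\|$ and $\|x_{k,+}-x^*\|$ with $r_{k,+}$, where the paper simply invokes \eqref{eq:good_bound_1}--\eqref{eq:good_bound_2} of \cref{lem:useful_bounds_new_strong} with $\lambda=\lambda_k/4$; the circularity you worry about is not real, because the crude bound $\|M(x_{k,+},x_k)\|_*\le L\,r_{k,+}$ from \eqref{eq:gradient_variation} already gives $\mu\|x_{k,+}-x^*\|\le (L+\lambda_k/4)\,r_{k,+}$, and constant factors are all that is needed inside $\mathcal{O}(\cdot)$, so you do not need to appeal to the semismooth analysis.
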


 \begin{remark}\label{rem:improvement}
     If $\psi \equiv 0$, we can improve \eqref{eq:gamma_two_step_rate} and achieve
     \[    \|x_{k+1} - x^*\| = \mathcal{O}(\norm{x_{k}-x^*}{}^{\frac{1+2\gamma}{1+\gamma}})\]
     by simply using $g_k \leq (L\slash 2)\norm{x_k-x^*}{}$.
 \end{remark}

\begin{restatable}{lem}{lemctwoimpliesdm}
 \label{lem:c2_implies_dm}
Let \cref{ass:strong_convexity} hold and $x_k \to x^*$. If $H$ is H\"older continuous with exponent $\gamma$ at $x^*$, 
then  \eqref{eq:gamma_order_DM_condition} holds. %

\end{restatable}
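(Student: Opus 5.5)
We must show that $\norm{(H(x_{k,+}) - H(x_k))(x_{k,+} - x^*)}{*} = \mathcal{O}(\norm{x_{k,+}-x_k}{}^{1+\gamma})$, assuming Hölder continuity of $H$ at $x^*$ with exponent $\gamma$. We read this as: there is $C_H$ such that $\norm{H(x)-H(x^*)}{\mathrm{op}} \le C_H \norm{x-x^*}{}^{\gamma}$ for $x$ near $x^*$. The plan is to split the operator difference through $H(x^*)$ and then compare all distances to $x^*$ with the step length $\norm{x_{k,+}-x_k}{}$, using the local strong convexity of \cref{ass:strong_convexity}.

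\emph{Step 1 (Hölder estimate).} By the triangle inequality,
\[
\norm{(H(x_{k,+}) - H(x_k))(x_{k,+} - x^*)}{*} \le C_H\bigl(\norm{x_{k,+}-x^*}{}^{\gamma} + \norm{x_k-x^*}{}^{\gamma}\bigr)\norm{x_{k,+}-x^*}{}.
\]
This requires $x_{k,+}\to x^*$. That holds because $\norm{x_{k,+}-x_k}{}\to 0$, which one can see from the prox-step optimality together with strong convexity near $x^*$ and $g_k\to 0$, or from the relation between $x_{k,+}$ and $x_k$ established in the proof of \cref{thm:fast_local}.

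\emph{Step 2 (reduce everything to $\norm{x_{k,+}-x_k}{}$).} It suffices to prove $\norm{x_{k,+}-x^*}{} \le c\,\norm{x_{k,+}-x_k}{}$ and $\norm{x_k-x^*}{} \le c\,\norm{x_{k,+}-x_k}{}$. The second follows from the first by the triangle inequality. For the first, apply local strong convexity (monotonicity of $\partial F$ on $B_R(x^*)$) together with $0\in\partial F(x^*)$:
\[
\mu\norm{x_{k,+}-x^*}{}^2 \le \langle F'(x_{k,+}), x_{k,+}-x^*\rangle,
\]
so that $\norm{x_{k,+}-x^*}{} \le \mu^{-1}\norm{F'(x_{k,+})}{*}$. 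Next, \eqref{eq:F_div_def} gives $F'(x_{k,+}) = f'(x_{k,+})-f'(x_k)-H(x_{\pi(k)})(x_{k,+}-x_k) - \tfrac{\lambda_k}{4}\mathcal{R}(x_{k,+}-x_k)$. Using the $\tfrac L2$-Lipschitz continuity of $f'$, the bound \eqref{eq:bounded_Hessian}, and $\lambda_k\le\overline\lambda$ from \eqref{eq:lambda_bound}, we get $\norm{F'(x_{k,+})}{*}\le (L+\overline\lambda/4)\norm{x_{k,+}-x_k}{}$. Hence $c=(L+\overline\lambda/4)/\mu$ works.

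\emph{Step 3 (conclusion).} Substituting Step 2 into Step 1 gives the bound $C_H(c^\gamma+(1+c)^\gamma)\,c\,\norm{x_{k,+}-x_k}{}^{1+\gamma}$, which is \eqref{eq:gamma_order_DM_condition}. The same argument, with Hölder continuity replaced by plain continuity, presumably also yields \cref{lm:lemlazyDMHolds}.

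The main obstacle is Step 2, specifically ensuring that $x_{k,+}$ lies in $B_R(x^*)$ so that strong convexity applies. I would handle this via $\norm{x_{k,+}-x_k}{}\to 0$, bounding $r_{k,+}$ through the optimality condition and the strong convexity of the model. If that fails, I would restrict to the case $\lzsymb=1$ used in \cref{thm:fast_local_gamma}, where the model Hessian $H(x_k)\succeq 0$ plus $\tfrac{\lambda_k}{4}I$ gives a direct bound on the step.
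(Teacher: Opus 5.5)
\textbf{Comparison with the paper.} Your Steps 1 and 3 match the paper: split through $H(x^*)$, apply H\"older continuity, and bound both $\|x_{k,+}-x^*\|$ and $\|x_k-x^*\|$ by multiples of $r_{k,+}$. Your Step 2 takes a different and more direct route. The paper gets $\|x_{k,+}-x^*\|\le c_1 r_{k,+}$ from \cref{lem:useful_bounds_new_strong}: the acceptance-test bound $g_{k+1}\le 2\lambda_k r_k$ at the accepted point $x_{k+1}$ is transferred to $x_{k,+}$ via \cref{lem:lambda_dependence}. You instead use strong monotonicity directly at $x_{k,+}$ and bound $\|F'(x_{k,+})\|_*\le (L+\overline{\lambda}/4)\,r_{k,+}$ from \eqref{eq:F_div_def}. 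That computation is correct. It needs neither the acceptance conditions nor $H(x_{\pi(k)})\succeq 0$, and it gives an explicit constant.

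\textbf{The gap.} The obstacle you flag yourself is not resolved by what you propose. Strong monotonicity at $x_{k,+}$ requires $x_{k,+}\in B_R(x^*)$, i.e.\ $r_{k,+}\to 0$. The $\tfrac{\lambda_k}{4}$-strong convexity of the model only gives $r_{k,+}\le 4g_k/\lambda_k$ (this is \eqref{eq:rk_leq_gk_over_lambda_k_new}). Nothing available at this point controls $g_k/\lambda_k$:
\begin{itemize}
\item the only lower bound is $\lambda_k\ge\Lambda_k g_k^p$;
\item $\Lambda_k$ may shrink by a factor $4$ at every iteration with $j_k=0$;
\item in the regime where this lemma is used, $\lambda_k\to 0$.
\end{itemize}
Your fallback to $\lzsymb=1$ produces exactly the same bound, so it does not help. (Also, $H(x_{\pi(k)})\succeq 0$ holds for large $k$ anyway.)

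\textbf{The missing idea.} Compare $x_{k,+}$ with the accepted point $x_{k+1}=x_+(\lambda_k,x_k,x_{\pi(k)})$; this is the content of \cref{lem:lambda_dependence,lem:strange_convergence}. Subtract the optimality conditions \eqref{eq:PLMSN_step_optimality} at $\lambda_k/4$ and at $\lambda_k$, and use monotonicity of $\partial\psi$ together with $H(x_{\pi(k)})\succeq 0$. This gives
\[
\lambda_k\|x_{k,+}-x_{k+1}\|^2\le \tfrac{3\lambda_k}{4}\,\langle \mathcal{R}(x_{k,+}-x_k),\,x_{k,+}-x_{k+1}\rangle ,
\]
hence $\|x_{k,+}-x_{k+1}\|\le \tfrac34 r_{k,+}$. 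The triangle inequality then gives
\[
r_{k,+}\le 4r_k\le 4\bigl(\|x_k-x^*\|+\|x_{k+1}-x^*\|\bigr)\to 0.
\]
With this inserted, your argument is complete.

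\textbf{A small simplification.} Step 1 does not actually need $x_{k,+}\to x^*$. Since $\|H(\cdot)\|_{\mathrm{op}}\le L/2$ uniformly, a local H\"older bound at $x^*$ extends to a global one with a larger constant. Only Step 2 genuinely needs the localization.
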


\begin{restatable}[Main theorem, $\gamma$-order semismoothness]{theorem}{thmglobalsuperlineargamma}
\label{thm:global_superlinear_gamma}
Let \cref{ass:basic} hold, $F$ satisfy the Polyak--\L{}ojasiewicz condition \eqref{eq:PL}, the sublevel set $\F_0$ be bounded, and $p < 1$. Then, 
$F(x_k)-F^* \to 0$, $x_k \to x^*$, and $\norm{F'(x_k)}{*} \to 0$ linearly as $k\to \infty$. If in addition the local conditions of  \cref{ass:strong_convexity,ass:gamma_order_semismoothness}, 
\eqref{eq:gamma_order_DM_condition} hold, $m=1$ and $p \in [1/2,1)$, then the convergences
\begin{align*}
&x_k \to x^*, \quad F(x_k)\to F^* %
\end{align*}
are all superlinear, the convergence of $\norm{F'(x_k)}{*}$ is $\frac{1+2\gamma}{1+\gamma}$-order superlinear, and the convergence of $x_k$ is two-step $\frac{1+2\gamma}{1+\gamma}$-order superlinear.
\end{restatable}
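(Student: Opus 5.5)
The proof combines earlier results; no new estimates are needed. For the first part we invoke \cref{thm:global_convergence_nonconv_PL}. Its hypotheses are \cref{ass:basic}, the PL condition \eqref{eq:PL}, boundedness of $\F_0$, and $p<1$. Part (i) gives linear convergence of $F(x_k)-F^*$. Part (ii) gives strong linear convergence of $x_k$ to a global minimizer $x^*$ and linear convergence of $\norm{F'(x_k)}{*}$ to $0$. This settles the linear statements and fixes the limit point $x^*$ used in the local assumptions.

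For the local part, take $x^*$ to be this limit. We then check the hypotheses of \cref{thm:fast_local_gamma}. \cref{ass:basic,ass:strong_convexity,ass:gamma_order_semismoothness} and \eqref{eq:gamma_order_DM_condition} are assumed, $x_k\to x^*$ is now known, and $m=1$ and $p\in[1/2,1]$ hold because $p\in[1/2,1)$. Condition \eqref{eq:Lambda_0_assumpt} is the nonrestrictive one that the paper takes as guaranteed by a burn-in phase. \cref{thm:fast_local_gamma} then gives
\[
\norm{F'(x_{k+1})}{*}=\mathcal{O}\bigl(\norm{F'(x_k)}{*}^{\frac{1+2\gamma}{1+\gamma}}\bigr),
\]
which is the claimed order for the subgradients. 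It also gives \eqref{eq:gamma_two_step_rate}, which is the two-step $\frac{1+2\gamma}{1+\gamma}$-order convergence of the iterates.

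Plain superlinearity of $x_k$ and $F(x_k)$ needs care. Since $\gamma$-order semismoothness implies \cref{ass:semismoothness}, the intended route is \cref{thm:fast_local} (equivalently, the superlinear part of \cref{thm:global_superlinear}). That route also requires \eqref{eq:DM_condition} and \eqref{eq:lazy_DM_condition}. With $m=1$, \eqref{eq:lazy_DM_condition} is trivial because $H(x_k)-H(x_{\pi(k)})=0$. Condition \eqref{eq:DM_condition} follows from \eqref{eq:gamma_order_DM_condition}, since $\mathcal{O}(\norm{x_{k,+}-x_k}{}^{1+\gamma})=\so(\norm{x_{k,+}-x_k}{})$ once $x_{k,+}-x_k\to0$.

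The main obstacle is justifying $\norm{x_{k,+}-x_k}{}\to0$. I would obtain it from the strong-convexity and semismoothness estimates used in the proof of \cref{thm:fast_local}. Alternatively, I would use the second acceptance condition in \eqref{eq:stop_cond_alg} at $\lambda_k/4$ together with $F_k\to0$. A third option is to avoid this point altogether: the estimate $\lambda_k=\mathcal{O}(g_k^{\gamma/(1+\gamma)})$ from \cref{thm:fast_local_gamma} gives $\lambda_k\to0$ directly. Then \cref{thm:global_convergence_nonconv_PL} yields superlinear convergence of $F(x_k)-F^*$, and, using bounded $\F_0$ and $p<1$, also of $x_k$. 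This third option is the cleanest.
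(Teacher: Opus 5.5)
Your proof is correct and has the same structure as the paper's. The linear rates come from \cref{thm:global_convergence_nonconv_PL}, the $\frac{1+2\gamma}{1+\gamma}$ orders come from \cref{thm:fast_local_gamma}, and plain superlinearity comes from $\lambda_k\to0$.

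The one real difference is how you obtain $\lambda_k\to0$.
\begin{itemize}
\item The paper invokes \cref{thm:fast_local}. It asserts without comment that the hypotheses \eqref{eq:DM_condition} and \eqref{eq:lazy_DM_condition} of that theorem hold, although the statement only assumes \eqref{eq:gamma_order_DM_condition} and $m=1$.
\item Your third option reads $\lambda_k\to0$ off the bound $\lambda_k=\mathcal{O}(g_k^{\gamma/(1+\gamma)})$ in \cref{thm:fast_local_gamma}, combined with $g_k\to0$ and $\gamma>0$. You then let \cref{thm:global_convergence_nonconv_PL} give the superlinear rates for $F(x_k)$ and $x_k$.
\end{itemize}
Your route never needs the Dennis--Mor\'e conditions, so it is slightly cleaner. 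Once $\lambda_k\to0$ is known, \cref{lem:iterates_g_k_and_lambda_k} (already used inside \cref{thm:fast_local_gamma}) also gives the Q-superlinear bound $\norm{x_{k+1}-x^*}{}\le\frac{4\lambda_k}{\mu}\norm{x_k-x^*}{}$, matching what the paper's route through \cref{thm:fast_local} provides.

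If you prefer the paper's route, the ``main obstacle'' you flag is not an obstacle. The limit $\norm{x_{k,+}-x_k}{}\to0$ follows immediately from \cref{lem:strange_convergence} ($x_{k,+}\to x^*$) together with $x_k\to x^*$. Your second suggested option, however, would not work. When $j_k>0$, the parameter $\lambda_k/4$ is exactly the trial value that failed \eqref{eq:stop_cond_alg}, so no acceptance inequality holds at $x_{k,+}$. When $j_k=0$, $x_{k,+}$ is not a trial point at all.
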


\subsection{Faster rates in the finite-dimensional \texorpdfstring{$C^2$}{C\textasciicircum 2} setting}  \label{sec:c2_setting}  
Finally, we consider an important special case when 
$\HH$ is finite-dimensional and  locally $f\in C^2$. Herein, we can relax the local strong convexity in \cref{ass:strong_convexity} to just local convexity. Although $f\in C^2$ is a standard assumption for Newton methods, previous literature does not provide improved asymptotic rates, which we now do. The intuition behind this result is that since $f\in C^2$, locally around the solution $\eta_k$ in \eqref{eq:gradient_variation} tends to $0$, which by \eqref{prop:lambda_k_to_zero} yields $\lambda_k\to0$, and, in turn, by \cref{thm:global_convergence_nonconv_PL}, %
superlinear convergence.
\begin{restatable}[Main theorem, finite dimensions, $C^2$]{theorem}{thmglobalconvergencectwo}
\label{thm:global_convergence_c2} 
Let $\dim(\mathcal{H}) < \infty$, \cref{ass:basic} hold, $F$ satisfy the Polyak--\L{}ojasiewicz condition \eqref{eq:PL}, $p<1$, and the sublevel set $\F_0$ be bounded. Then, $F(x_k)-F^* \to 0$, $x_k \to x^*$, and $\norm{F'(x_k)}{*} \to 0$ linearly as $k\to \infty$.
If additionally $f$ is locally $C^2$ and locally convex around $x^*$, and $H \coloneqq  \nabla^2 f$, then the above convergences
are all superlinear.
\end{restatable}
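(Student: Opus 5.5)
The first part of the statement (linear convergence of $F(x_k)-F^*$, $x_k$, and $\norm{F'(x_k)}{*}$) is exactly \cref{thm:global_convergence_nonconv_PL}(ii), since \cref{ass:basic}, the PL condition \eqref{eq:PL}, boundedness of $\F_0$ and $p<1$ are all assumed. It therefore remains to show superlinearity. By the same theorem, superlinearity of all three sequences follows once we know $\lambda_k\to 0$. So the plan reduces to proving $\lambda_k\to0$ via \cref{prop:lambda_k_to_zero} with $\theta=0$. For this we need two ingredients: a sequence $\eta_k\to0$ satisfying \eqref{eq:eps_condition_1}, and the hypotheses of that proposition.

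\textbf{Step 1: the estimate $\eta_k\to0$.} Since $x_k\to x^*$ and $r_{k,+}=\norm{x_k-x_{k,+}}{}$ is controlled (e.g.\ by $g_k$ and $\lambda_k$, with $\lambda_k\ge$ some multiple of the regularization), we have $x_{k,+}\to x^*$ as well. I would bound $r_{k,+}$ using the optimality condition \eqref{eq:PLMSN_step_optimality} together with $H\succeq0$ near $x^*$ and monotonicity of $\partial\psi$. Then I write
\[
M(x_{k,+},x_k)=\int_0^1\bigl(\nabla^2 f(x_k+t(x_{k,+}-x_k))-\nabla^2 f(x_{\pi(k)})\bigr)(x_{k,+}-x_k)\,dt .
\]
This gives $\eta_k\le \sup_{t}\norm{\nabla^2 f(x_k+t(x_{k,+}-x_k))-\nabla^2 f(x_{\pi(k)})}{\mathrm{op}}$. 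Because $\nabla^2 f$ is continuous near $x^*$ and all the points involved converge to $x^*$ (in particular $x_{\pi(k)}$ does, since $k-\pi(k)<m$), we get $\eta_k\to0$. We also have $\eta_k\le L$, so $\eta_k$ is bounded.

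\textbf{Step 2: replacing local strong convexity.} \cref{prop:lambda_k_to_zero} assumes \cref{ass:strong_convexity}, but here we only have local convexity. I would check that its proof uses strong convexity only mildly. What it really needs is that, for $\lambda\ge$ the threshold $\sim\eta_k^{1/(1+\theta)}g_k^{\theta/(1+\theta)}$ (with $\theta=0$, just $\lambda\gtrsim\eta_k$), the acceptance conditions \eqref{eq:stop_cond_alg} hold. That argument mirrors \cref{lem:acceptance_inner_nonconv} with $L$ replaced by $\eta_k$. It uses only $M$, convexity of the model and $H\succeq0$, and $H=\nabla^2 f\succeq0$ follows from local convexity of $f$. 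If strong convexity is genuinely needed elsewhere, the fallback is the PL condition, which in finite dimensions together with convexity gives quadratic growth and suffices for the missing distance estimates.

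With $\theta=0$, \eqref{eq:lambda_k_iterative_estimate} then gives $\lambda_k\le\max\{4\sqrt2\eta_k,\lambda_{k-1}/2\}$, and iterating this yields $\lambda_k\to0$. Applying the superlinear part of \cref{thm:global_convergence_nonconv_PL} concludes the proof. I expect Step 2 to be the main obstacle: verifying that the acceptance-condition argument goes through without strong convexity. Finite dimensionality is used for the uniform continuity of $\nabla^2 f$ on a compact neighborhood and for compactness of $\F_0$.
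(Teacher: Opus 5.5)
Your outline follows the paper's proof. The linear part is \cref{thm:global_convergence_nonconv_PL}(ii). Then you verify \eqref{eq:eps_condition_1} with $\theta=0$ and $\eta_k\to0$ via the integral representation of $M(x_{k,+},x_k)$ and continuity of $\nabla^2 f$, obtain \eqref{eq:lambda_k_iterative_estimate}, conclude $\lambda_k\to0$ by \cref{lem:helper_lambdak_zero}, and read off superlinearity from \cref{thm:global_convergence_nonconv_PL}. Your Step 2 worry resolves as you suspect. The refined bound on $\lambda_k$ uses curvature only through \cref{lem:helper_lemma_lambda_k_jk_pos}, which the paper states under local convexity of $F$ and $H(x_{\pi(k)})\succeq0$ alone. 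No quadratic-growth fallback is needed, since the strong-convexity estimates of \cref{lem:useful_bounds_new_strong,lem:iterates_g_k_and_lambda_k} never enter this route.

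Two sub-steps need fixing, though.

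First, $x_{k,+}\to x^*$ does not follow from a single-subproblem estimate. Optimality, $H\succeq0$ and monotonicity of $\partial\psi$ applied to one subproblem give only \eqref{eq:rk_leq_gk_over_lambda_k_new}, i.e.\ $r_{k,+}\le 4g_k/\lambda_k$. Nothing available bounds $\lambda_k$ below by a multiple of $g_k$: the only bound is $\lambda_k\ge\Lambda_k g_k^p$, and $\Lambda_k$ may shrink by a factor $4$ per iteration. So this does not yield $r_{k,+}\to0$. What works is comparing $x_{k,+}=x_+(\lambda_k/4,x_k,x_{\pi(k)})$ with the accepted point $x_{k+1}=x_+(\lambda_k,x_k,x_{\pi(k)})$. \cref{lem:lambda_dependence} then gives $r_{k,+}\le 4r_k=4\norm{x_k-x_{k+1}}{}\to0$; this is \cref{lem:strange_convergence}.

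Second, ``mirror \cref{lem:acceptance_inner_nonconv} with $L$ replaced by $\eta_k$'' fails if you use $M(x_{k,+},x_k)$ only at the endpoint. That lemma's function-decrease inequality relies on a descent bound along the whole segment. You have two ways to repair this.
\begin{itemize}
\item Use local convexity of $F$, i.e.\ $F(x_k)-F(x_{k,+})\ge\langle F'(x_{k,+}),x_k-x_{k,+}\rangle$, combined with the gradient-type inequality that \eqref{eq:eps_condition_1} does give. This is what the paper does.
\item Use your own $C^2$ integral with $\eta_k$ defined as the supremum over the segment. With $h=x_{k,+}-x_k$, it gives $f(x_{k,+})\le f(x_k)+\langle f'(x_k),h\rangle+\frac12\langle H(x_{\pi(k)})h,h\rangle+\frac{\eta_k}{2}\norm{h}{}^2$. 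Then $H(x_{\pi(k)})\succeq0$, in place of the $L/4$ bound on the quadratic term, yields the decrease condition as soon as $\lambda_k/4\ge\eta_k$. This variant uses only $H(x_{\pi(k)})\succeq0$, not convexity of $F$ on the segment.
\end{itemize}
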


\subsection{Discussion}

The choice of $p$ in \cref{alg:proximal_newton} should be guided by the known properties of the objective $F$ and desired convergence properties. Global rates hold for the whole range $p\in [0,1]$ except for linear convergence $x_k\to x^*$ under PL condition, which is valid for $p\in [0,1)$. Superlinear convergence under semismoothness or in $C^2$ case is obtained for the same range of $p$, and improved superlinear convergence under $\gamma$-order semismoothness further restricts $p$ to $p\in[1/2,1)$. The universal choice is $p=1/2$ for which all the results hold. Note that gradient regularization is crucial for achieving $\frac{1+2\gamma}{1+\gamma}$-order superlinear convergence.

\section{Experiments}    
\begin{figure*}[!h]
\centering
 \includegraphics[scale=0.6]{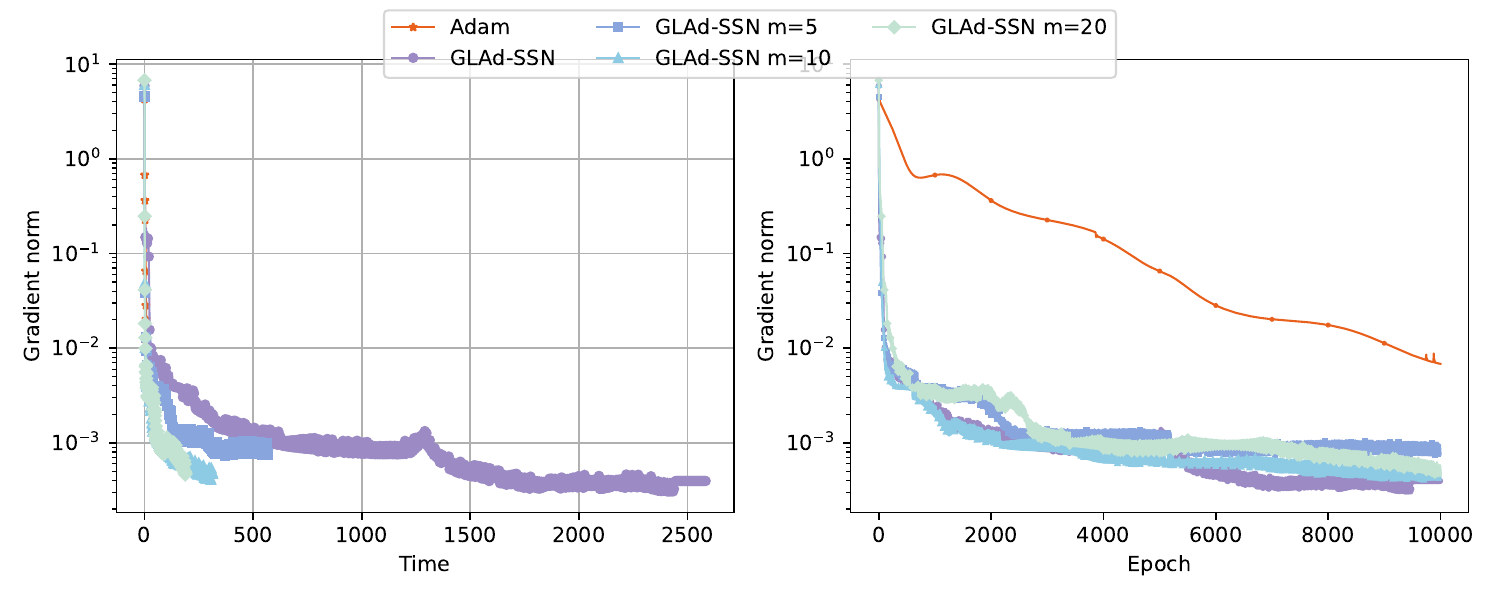}
\caption{Training neural networks with Lipschitz constraints (\cref{sec:numerics_NN}). \adam is almost invisible on the left figure as it finishes 10,000 epochs very quickly, but \gladssn performs an order of magnitude better in iteration count. Laziness helps significantly!}
\label{fig:NN}
\end{figure*}
We consider three examples from focus topics in ML and demonstrate the efficacy of our method. The code will be made publicly available at acceptance. %
Relevant semismoothness properties of our examples and additional implementation details are deferred to \cref{sec:app_experiments}.
\subsection{Neural networks with Lipschitz constraints.}\label{sec:numerics_NN}
Motivated by inverse imaging tasks such as plug-and-play (PnP) reconstruction and the use of adversarial networks, there is growing interest in training neural
networks whose realized input-to-output map \(x \mapsto \Phi(x,\theta)\) has a controlled Lipschitz
constant; see \cite{ducotterd2024improving} and \cite{hertrich2025learning} for a recent overview. 

We consider a prototypical regression problem under a soft \(1\)-Lipschitz constraint of the gradient $\nabla_x \Phi(\cdot,\theta)$. Given training
pairs \((x_i,y_i)_{i=1}^M \subset \mathbb{R}^n \times \mathbb{R}^d\), we solve for $\lambda>0$
\begin{equation}\label{eq:Lipschitz_objective}
\begin{aligned}
\min_{\theta}\;&
\frac{1}{M}\sum_{i=1}^M \|\Phi(x_i,\theta)-y_i\|^2  \\
&\;+
\frac{\lambda}{2M}\sum_{i=1}^M
\Big((\|\nabla_x^2\Phi(x_i,\theta)\|_{2}^2-1)^+\Big)^2,  
\end{aligned}
\end{equation}
where \(\nabla^2_x\Phi(x_i,\theta)\) denotes the Hessian of \(\Phi(\cdot,\theta)\) with respect to the input, and \(\|\cdot\|_2\) is the spectral norm. \cref{fig:NN} shows the results of solving \eqref{eq:Lipschitz_objective} with \gladssn, with and without laziness, and a comparison to \adam, the standard solver in learning. We see that our algorithm performs excellently in iteration count, and laziness saves considerable time. This clearly demonstrates the advantage lazy Hessian updates can have.  \cref{fig:NN_lip} shows that \gladssn achieves the desired Lipschitz bound quicker than \adam, and does not oscillate.

\subsection{Non-negative matrix factorization}\label{sec:numericsNMF}
Non-negative matrix factorization (NMF) is a standard tool in signal processing and machine learning, see  \citep{cichocki2007nonnegative,gillis2020nonnegative}.
Given a data matrix $Y\in\mathbb{R}^{n\times d}$ whose columns are (noisy) measurements, NMF seeks nonnegative factors $U\in\mathbb{R}^{n\times r}_+$ (basis elements) and
$V\in\mathbb{R}^{r\times d}_+$ (coefficients) such that $UV \approx Y$, leading to the problem
\begin{equation*}
    \min_{U,V} \frac{1}{2}\|UV - Y \|^2 + \alpha(\|U\|^2 + \|V\|^2) \quad \text{s.t.} \; U,V \geq 0. \label{eq:mat_fac_1}
\end{equation*}
We use Moreau--Yosida regularization and consider
\begin{equation}\label{eq:mat_fac_2}
\begin{aligned}
    &\min_{U,V} %
    \frac{1}{2}\|UV - Y \|^2 + \alpha(\|U\|^2 + \|V\|^2)\\
    &\quad + \frac{1}{2\beta}(\|(-U)^+\|^2  + \|(-V)^+\|^2)
\end{aligned}
\end{equation}
for a sufficiently small penalty parameter $\beta>0$.
Despite the objective gradient not being globally Lipschitz, our algorithm still performs well, see \cref{fig:MF_grad} 
where we  compare \gladssn for different $p$ (powers of the gradient norm in the regularization term, see line 4 of \cref{alg:proximal_newton}) with the algorithm \leapssn from \cite{LeapSSN} and a gradient method with Armijo linesearch, see \cite{wright2006numerical}. \gladssn has a superior performance. In \cref{fig:MF_penalty} we plot $\frac{1}{2\beta}(\|(-U)^+\|^2  + \|(-V)^+\|^2)$, which measures the violation of the non-negativity constraint.

\subsection{Support vector classification}\label{sec:numerics_SVM}
We consider a support vector machine classification problem (also studied in \cite{LeapSSN}). Given points $(x_i)_{i=1}^\ell \subset \mathbb{R}^n$  where each point is classified into one of two groups denoted by $(y_i)_{i \in 1}^\ell \subset \{-1,1\}$, the aim is to find $\omega \in \mathbb{R}^n$ and $b \in \mathbb{R}$ such that the hyperplane $\omega^\top x + b =0$ separates the two groups. Given a $\gamma>0$, we consider (as in \cite{yin2019}), the L2-loss SVM model: find $(\omega, b) \in \mathbb{R}^n \times \mathbb{R}$ that minimizes 
\begin{align*}
F(\omega,b) = \frac{1}{2} \|\omega\|^2_{\ell^2} + \gamma \sum_{i=1}^\ell \max(1 - y_i (\omega^\top x_i + b), 0)^2.
\end{align*}
We again run our \gladssn with and without laziness and compare it to \leapssn and gradient descent with Armijo linesearch. The results can be seen in \cref{fig:SVM}, which again shows that lazy updates can save compute time while still possessing strong error guarantees.
\onecolumn

\begin{figure}[t]
\centering
\begin{minipage}{0.49\textwidth}
  \centering
  \includegraphics[width=\textwidth]{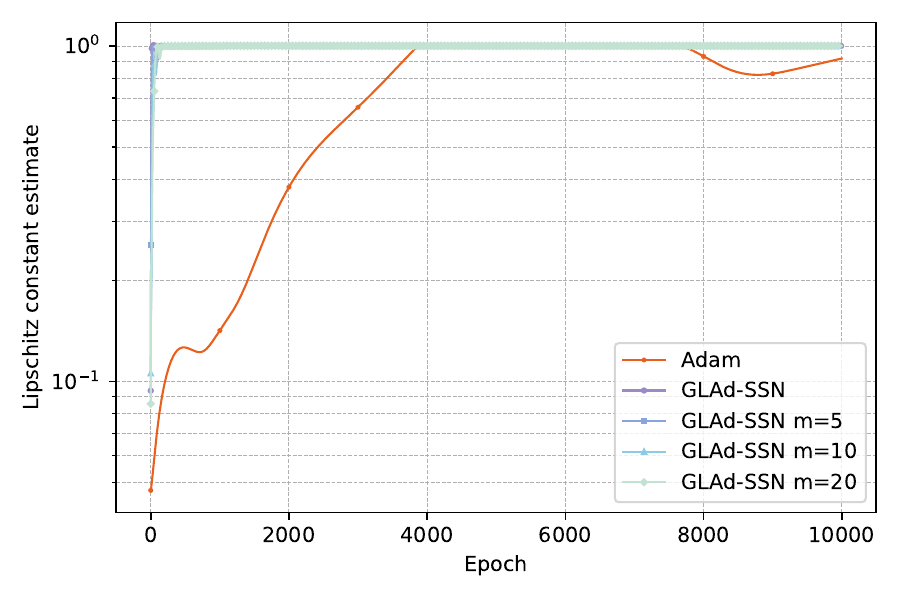}
  \caption{Estimated Lipschitz constant of the learned neural networks (\cref {sec:numerics_NN})}
  \label{fig:NN_lip}
\end{minipage}\hfill
\begin{minipage}{0.49\textwidth}
  \centering
  \includegraphics[width=\textwidth]{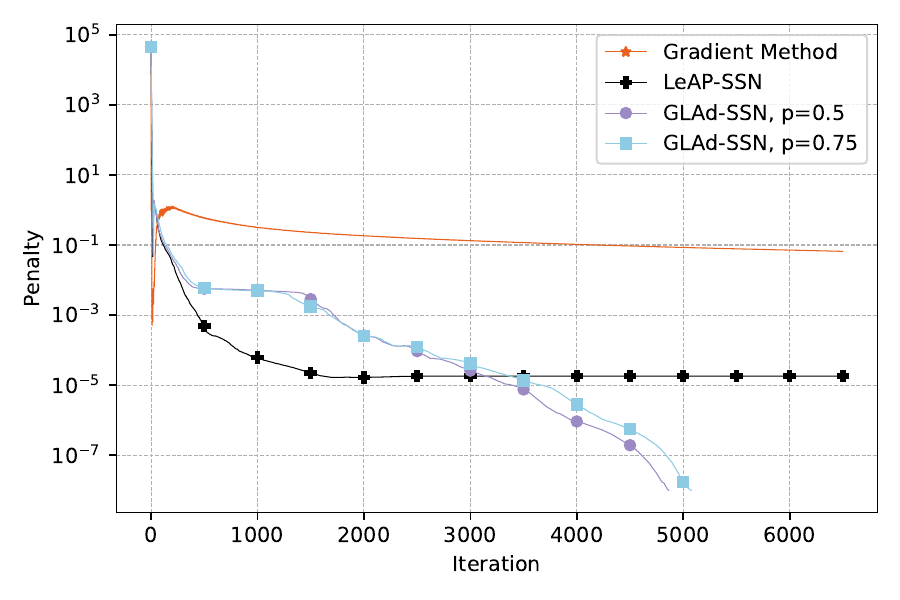}
  \caption{Violation of the non-negativity constraint in the matrix factorization example (\cref{sec:numericsNMF}); the smaller the better.}
  \label{fig:MF_penalty}
\end{minipage}
\end{figure}
\begin{figure*}%
\centering
 \includegraphics[scale=0.65]{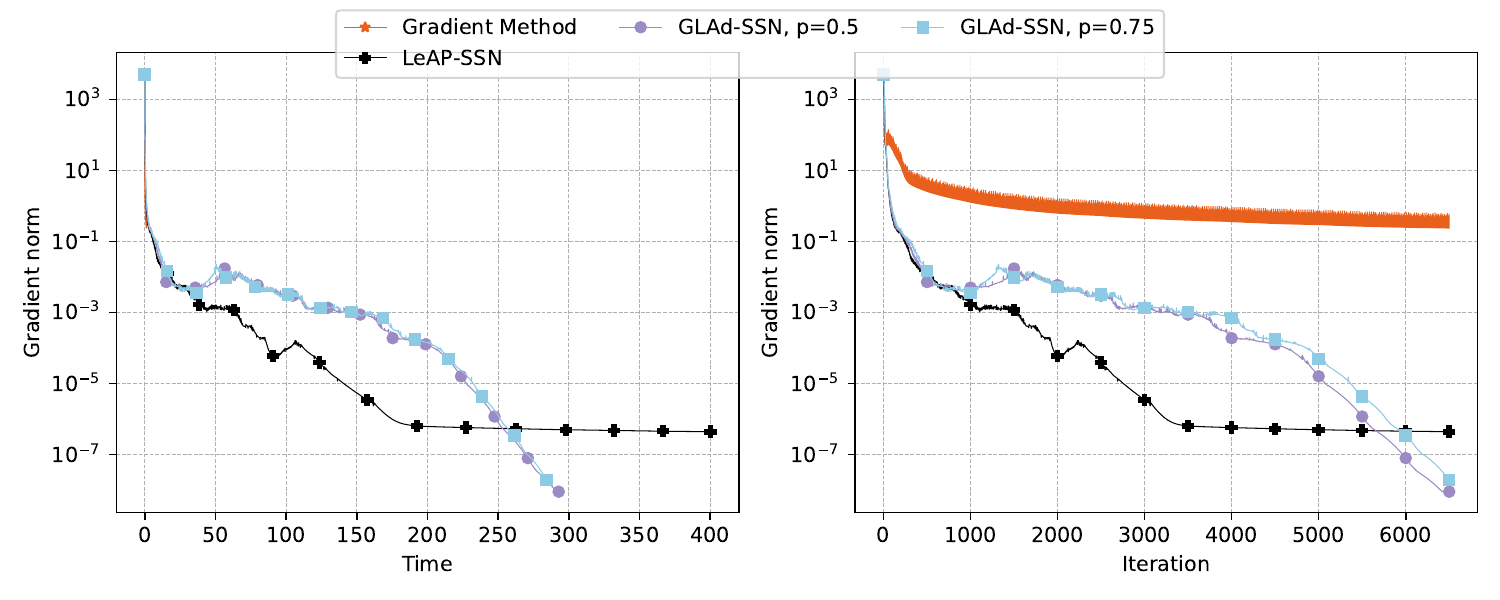}
\caption{Non-negative matrix factorization (\cref{sec:numericsNMF}): the gradient method finishes fast but performs poorly, \leapssn does well initially but gets overtaken by \gladssn. %
}
\label{fig:MF_grad}
\end{figure*}
\begin{figure*}%
\centering
 \includegraphics[scale=0.7]{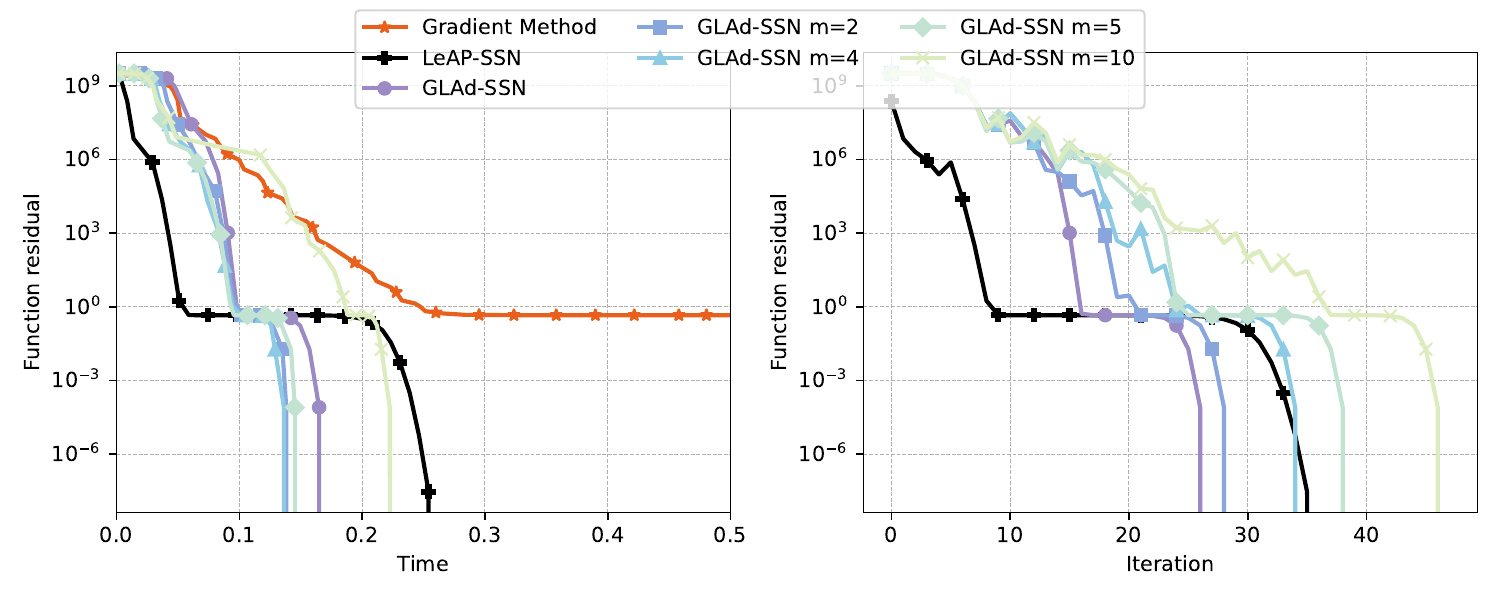}
\caption{Support vector machine classification (\cref{sec:numerics_SVM}): comparable behavior between \gladssn and \leapssn, and both perform much better than the gradient method.}
\label{fig:SVM}
\end{figure*}

\clearpage %
\twocolumn
\bibliography{main}

\begin{thebibliography}{69}
\providecommand{\natexlab}[1]{#1}
\providecommand{\url}[1]{\texttt{#1}}
\expandafter\ifx\csname urlstyle\endcsname\relax
  \providecommand{\doi}[1]{doi: #1}\else
  \providecommand{\doi}{doi: \begingroup \urlstyle{rm}\Url}\fi

\bibitem[Alphonse et~al.(2025{\natexlab{a}})Alphonse, Dvurechensky,
  Papadopoulos, and Sirotenko]{LeapSSN}
Alphonse, A., Dvurechensky, P., Papadopoulos, I.~P., and Sirotenko, C.
\newblock {LeAP-SSN: A Semismooth Newton Method with Global Convergence Rates}.
\newblock \emph{arXiv preprint arXiv:2508.16468}, 2025{\natexlab{a}}.

\bibitem[Alphonse et~al.(2025{\natexlab{b}})Alphonse, Dvurechensky,
  Papadopoulos, and Sirotenko]{github:leapssn}
Alphonse, A., Dvurechensky, P., Papadopoulos, I. P.~A., and Sirotenko, C.
\newblock {LeAP-SSN}, 2025{\natexlab{b}}.
\newblock URL \url{https://github.com/amal-alphonse/leapssn}.

\bibitem[Bolte et~al.(2009)Bolte, Daniilidis, and Lewis]{MR2421297}
Bolte, J., Daniilidis, A., and Lewis, A.
\newblock Tame functions are semismooth.
\newblock \emph{Math. Program.}, 117\penalty0 (1-2):\penalty0 5--19, 2009.
\newblock ISSN 0025-5610,1436-4646.
\newblock \doi{10.1007/s10107-007-0166-9}.
\newblock URL \url{https://doi.org/10.1007/s10107-007-0166-9}.

\bibitem[Carmon et~al.(2022)Carmon, Hausler, Jambulapati, Jin, and
  Sidford]{carmon2022optimal}
Carmon, Y., Hausler, D., Jambulapati, A., Jin, Y., and Sidford, A.
\newblock {Optimal and adaptive Monteiro--Svaiter acceleration}.
\newblock In \emph{Proceedings of the 36th International Conference on Neural
  Information Processing Systems}, NIPS '22, Red Hook, NY, USA, 2022. Curran
  Associates Inc.
\newblock ISBN 9781713871088.

\bibitem[Cartis et~al.(2011)Cartis, Gould, and Toint]{cartis2011adaptive}
Cartis, C., Gould, N.~I., and Toint, P.~L.
\newblock {Adaptive cubic regularisation methods for unconstrained
  optimization. Part I: motivation, convergence and numerical results}.
\newblock \emph{Mathematical Programming}, 127\penalty0 (2):\penalty0 245--295,
  2011.

\bibitem[Cartis et~al.(2019)Cartis, Gould, and Toint]{CarGouToi19}
Cartis, C., Gould, N.~I., and Toint, P.~L.
\newblock Universal regularization methods: Varying the power, the smoothness
  and the accuracy.
\newblock \emph{SIAM Journal on Optimization}, 29\penalty0 (1):\penalty0
  595--615, 2021/04/10 2019.
\newblock \doi{10.1137/16M1106316}.
\newblock URL \url{https://doi.org/10.1137/16M1106316}.

\bibitem[Chen et~al.(2025)Chen, Liu, Luo, and Zhang]{chen2025computationally}
Chen, L., Liu, C., Luo, L., and Zhang, J.
\newblock {Computationally Faster Newton Methods by Lazy Evaluations}.
\newblock \emph{arXiv preprint arXiv:2501.17488}, 2025.

\bibitem[Chen et~al.(2000)Chen, Nashed, and Qi]{MR1786137}
Chen, X., Nashed, Z., and Qi, L.
\newblock Smoothing methods and semismooth methods for nondifferentiable
  operator equations.
\newblock \emph{SIAM J. Numer. Anal.}, 38\penalty0 (4):\penalty0 1200--1216,
  2000.
\newblock ISSN 0036-1429,1095-7170.
\newblock \doi{10.1137/S0036142999356719}.
\newblock URL \url{https://doi.org/10.1137/S0036142999356719}.

\bibitem[Chow et~al.(2016)Chow, Ito, and Zou]{Chow2016Analysis}
Chow, Y.~T., Ito, K., and Zou, J.
\newblock Analysis on a nonnegative matrix factorization and its applications.
\newblock \emph{SIAM Journal on Scientific Computing}, 38\penalty0
  (5):\penalty0 B645--B684, 2016.
\newblock \doi{10.1137/15M1020824}.
\newblock URL \url{https://doi.org/10.1137/15M1020824}.

\bibitem[Cibulka et~al.(2015)Cibulka, Dontchev, and
  Geoffroy]{cibulka2015inexact}
Cibulka, R., Dontchev, A., and Geoffroy, M.~H.
\newblock {Inexact Newton Methods and Dennis--More Theorems for Nonsmooth
  Generalized Equations}.
\newblock \emph{SIAM Journal on Control and Optimization}, 53\penalty0
  (2):\penalty0 1003--1019, 2015.

\bibitem[Cichocki et~al.(2007)Cichocki, Zdunek, and
  Amari]{cichocki2007nonnegative}
Cichocki, A., Zdunek, R., and Amari, S.-i.
\newblock Nonnegative matrix and tensor factorization [lecture notes].
\newblock \emph{IEEE signal processing magazine}, 25\penalty0 (1):\penalty0
  142--145, 2007.

\bibitem[Cui \& Pang(2021)Cui and Pang]{cui2021modern}
Cui, Y. and Pang, J.-S.
\newblock \emph{Modern nonconvex nondifferentiable optimization}.
\newblock SIAM, 2021.

\bibitem[Dennis \& Mor\'e(1974)Dennis and Mor\'e]{MR343581}
Dennis, Jr., J.~E. and Mor\'e, J.~J.
\newblock A characterization of superlinear convergence and its application to
  quasi-{N}ewton methods.
\newblock \emph{Math. Comp.}, 28:\penalty0 549--560, 1974.
\newblock ISSN 0025-5718,1088-6842.
\newblock \doi{10.2307/2005926}.
\newblock URL \url{https://doi.org/10.2307/2005926}.

\bibitem[Doikov(2022)]{super-newton}
Doikov, N.
\newblock {super-newton}, 2022.
\newblock URL \url{https://github.com/doikov/super-newton}.

\bibitem[Doikov \& Nesterov(2024)Doikov and Nesterov]{doikov2024gradient}
Doikov, N. and Nesterov, Y.
\newblock Gradient regularization of newton method with bregman distances.
\newblock \emph{Mathematical Programming}, 204\penalty0 (1):\penalty0 1--25,
  2024.
\newblock ISSN 1436-4646.
\newblock \doi{10.1007/s10107-023-01943-7}.
\newblock URL \url{https://doi.org/10.1007/s10107-023-01943-7}.

\bibitem[Doikov et~al.(2023)Doikov, Jaggi, et~al.]{doikov2023second}
Doikov, N., Jaggi, M., et~al.
\newblock {Second-order optimization with lazy Hessians}.
\newblock In \emph{International Conference on Machine Learning}, pp.\
  8138--8161. PMLR, 2023.

\bibitem[Doikov et~al.(2024)Doikov, Mishchenko, and Nesterov]{doikov2024super}
Doikov, N., Mishchenko, K., and Nesterov, Y.
\newblock {Super-universal regularized Newton method}.
\newblock \emph{SIAM Journal on Optimization}, 34\penalty0 (1):\penalty0
  27--56, 2024.

\bibitem[Dontchev(2012)]{MR3023752}
Dontchev, A.~L.
\newblock Generalizations of the {D}ennis-{M}or\'e{} theorem.
\newblock \emph{SIAM J. Optim.}, 22\penalty0 (3):\penalty0 821--830, 2012.
\newblock ISSN 1052-6234,1095-7189.
\newblock \doi{10.1137/110833567}.
\newblock URL \url{https://doi.org/10.1137/110833567}.

\bibitem[Ducotterd et~al.(2024)Ducotterd, Goujon, Bohra, Perdios, Neumayer, and
  Unser]{ducotterd2024improving}
Ducotterd, S., Goujon, A., Bohra, P., Perdios, D., Neumayer, S., and Unser, M.
\newblock Improving lipschitz-constrained neural networks by learning
  activation functions.
\newblock \emph{Journal of Machine Learning Research}, 25\penalty0
  (65):\penalty0 1--30, 2024.

\bibitem[Facchinei \& Pang(2003)Facchinei and Pang]{facchinei2003finite}
Facchinei, F. and Pang, J.-S.
\newblock \emph{Finite-dimensional variational inequalities and complementarity
  problems}.
\newblock Springer, 2003.

\bibitem[Gasnikov et~al.(2019)Gasnikov, Dvurechensky, Gorbunov, Vorontsova,
  Selikhanovych, Uribe, Jiang, Wang, Zhang, Bubeck, Jiang, Lee, Li, and
  Sidford]{gasnikov2019near}
Gasnikov, A., Dvurechensky, P., Gorbunov, E., Vorontsova, E., Selikhanovych,
  D., Uribe, C.~A., Jiang, B., Wang, H., Zhang, S., Bubeck, S., Jiang, Q., Lee,
  Y.~T., Li, Y., and Sidford, A.
\newblock Near optimal methods for minimizing convex functions with lipschitz
  $p$-th derivatives.
\newblock In Beygelzimer, A. and Hsu, D. (eds.), \emph{Proceedings of the
  Thirty-Second Conference on Learning Theory}, volume~99 of \emph{Proceedings
  of Machine Learning Research}, pp.\  1392--1393, Phoenix, USA, 2019. PMLR.
\newblock URL \url{http://proceedings.mlr.press/v99/gasnikov19b.html}.
\newblock arXiv:1809.00382.

\bibitem[Gillis(2020)]{gillis2020nonnegative}
Gillis, N.
\newblock \emph{Nonnegative matrix factorization}.
\newblock SIAM, 2020.

\bibitem[Golub \& Van~Loan(2013)Golub and Van~Loan]{golub2013matrix}
Golub, G.~H. and Van~Loan, C.~F.
\newblock \emph{Matrix Computations}.
\newblock JHU Press, 4 edition, 2013.
\newblock ISBN 9781421407944.
\newblock \doi{10.1137/1.9781421407944}.

\bibitem[Grapiglia \& Nesterov(2017)Grapiglia and
  Nesterov]{grapiglia2017regularized}
Grapiglia, G.~N. and Nesterov, Y.
\newblock {Regularized Newton Methods for Minimizing Functions with Hölder
  Continuous Hessians}.
\newblock \emph{SIAM Journal on Optimization}, 27\penalty0 (1):\penalty0
  478--506, 2017.
\newblock \doi{10.1137/16M1087801}.
\newblock URL \url{https://doi.org/10.1137/16M1087801}.

\bibitem[Grapiglia \& Nesterov(2019)Grapiglia and
  Nesterov]{grapiglia2019accelerated}
Grapiglia, G.~N. and Nesterov, Y.
\newblock {Accelerated Regularized Newton Methods for Minimizing Composite
  Convex Functions}.
\newblock \emph{SIAM Journal on Optimization}, 29\penalty0 (1):\penalty0
  77--99, 2019.
\newblock \doi{10.1137/17M1142077}.
\newblock URL \url{https://doi.org/10.1137/17M1142077}.

\bibitem[Gratton et~al.(2025)Gratton, Jerad, and Toint]{gratton2025yet}
Gratton, S., Jerad, S., and Toint, P.~L.
\newblock {Yet another fast variant of Newton’s method for nonconvex
  optimization}.
\newblock \emph{IMA Journal of Numerical Analysis}, 45\penalty0 (2):\penalty0
  971--1008, 2025.

\bibitem[Hertrich et~al.(2025)Hertrich, Wong, Denker, Ducotterd, Fang,
  Haltmeier, Kereta, Kobler, Leong, Salehi, et~al.]{hertrich2025learning}
Hertrich, J., Wong, H.~S., Denker, A., Ducotterd, S., Fang, Z., Haltmeier, M.,
  Kereta, Z., Kobler, E., Leong, O., Salehi, M.~S., et~al.
\newblock Learning regularization functionals for inverse problems: A
  comparative study.
\newblock \emph{arXiv preprint arXiv:2510.01755}, 2025.

\bibitem[Hinterm{\"u}ller(2010)]{hintermuller2010semismooth}
Hinterm{\"u}ller, M.
\newblock {Semismooth Newton methods and applications}, 2010.
\newblock URL
  \url{https://www.math.uni-hamburg.de/home/hinze/Psfiles/Hintermueller_OWNotes.pdf}.

\bibitem[Hinterm\"uller et~al.(2002)Hinterm\"uller, Ito, and
  Kunisch]{hintermuller2002primal}
Hinterm\"uller, M., Ito, K., and Kunisch, K.
\newblock The primal-dual active set strategy as a semismooth {N}ewton method.
\newblock \emph{SIAM J. Optim.}, 13\penalty0 (3):\penalty0 865--888, 2002.
\newblock ISSN 1052-6234,1095-7189.
\newblock \doi{10.1137/S1052623401383558}.
\newblock URL \url{https://doi.org/10.1137/S1052623401383558}.

\bibitem[Hu et~al.(2024)Hu, Deng, Wu, and Li]{Hu2024projected}
Hu, J., Deng, K., Wu, J., and Li, Q.
\newblock {A projected semismooth Newton method for a class of nonconvex
  composite programs with strong prox-regularity}.
\newblock \emph{Journal of Machine Learning Research}, 25\penalty0
  (56):\penalty0 1--32, 2024.
\newblock URL \url{http://jmlr.org/papers/v25/23-0371.html}.

\bibitem[Hurault et~al.(2022)Hurault, Leclaire, and
  Papadakis]{hurault2022proximal}
Hurault, S., Leclaire, A., and Papadakis, N.
\newblock Proximal denoiser for convergent plug-and-play optimization with
  nonconvex regularization.
\newblock In \emph{International Conference on Machine Learning}, pp.\
  9483--9505. PMLR, 2022.

\bibitem[Ito \& Kunisch(2003)Ito and Kunisch]{MR1972649}
Ito, K. and Kunisch, K.
\newblock Semi-smooth {N}ewton methods for variational inequalities of the
  first kind.
\newblock \emph{M2AN Math. Model. Numer. Anal.}, 37\penalty0 (1):\penalty0
  41--62, 2003.
\newblock ISSN 0764-583X,1290-3841.
\newblock \doi{10.1051/m2an:2003021}.
\newblock URL \url{https://doi.org/10.1051/m2an:2003021}.

\bibitem[Jiang \& Li(2020)Jiang and Li]{jiang2020semismooth}
Jiang, C. and Li, Q.
\newblock {A Semismooth-Newton's-Method-Based Linearization and Approximation
  Approach for Kernel Support Vector Machines}.
\newblock \emph{arXiv preprint arXiv:2007.11954}, 2020.

\bibitem[Khanh et~al.(2024)Khanh, Mordukhovich, Phat, and
  Tran]{khanh2024globally}
Khanh, P.~D., Mordukhovich, B.~S., Phat, V.~T., and Tran, D.~B.
\newblock {Globally convergent coderivative-based generalized Newton methods in
  nonsmooth optimization}.
\newblock \emph{Mathematical Programming}, 205\penalty0 (1):\penalty0 373--429,
  2024.

\bibitem[Kim et~al.(2025)Kim, Lee, and Won]{kim2025partial}
Kim, D., Lee, S., and Won, J.-H.
\newblock {Partial Correlation Network Estimation by Semismooth Newton
  Methods}.
\newblock In \emph{The Thirty-ninth Annual Conference on Neural Information
  Processing Systems}, 2025.

\bibitem[Kovalev \& Gasnikov(2022)Kovalev and Gasnikov]{kovalev2022first}
Kovalev, D. and Gasnikov, A.
\newblock The first optimal acceleration of high-order methods in smooth convex
  optimization.
\newblock In \emph{Proceedings of the 36th International Conference on Neural
  Information Processing Systems}, NIPS '22, Red Hook, NY, USA, 2022. Curran
  Associates Inc.
\newblock ISBN 9781713871088.

\bibitem[Lin et~al.(2024{\natexlab{a}})Lin, Cuturi, and
  Jordan]{lin2024specialized}
Lin, T., Cuturi, M., and Jordan, M.
\newblock {A specialized semismooth Newton method for kernel-based optimal
  transport}.
\newblock In \emph{International Conference on Artificial Intelligence and
  Statistics}, pp.\  145--153. PMLR, 2024{\natexlab{a}}.

\bibitem[Lin et~al.(2024{\natexlab{b}})Lin, Cuturi, and
  Jordan]{lin_cuturi_jordan_2024}
Lin, T., Cuturi, M., and Jordan, M.~I.
\newblock {A Specialized Semismooth Newton Method for Kernel-Based Optimal
  Transport}.
\newblock In \emph{Proceedings of the 27th International Conference on
  Artificial Intelligence and Statistics (AISTATS)}, volume 238 of
  \emph{Proceedings of Machine Learning Research}, pp.\  145--153,
  2024{\natexlab{b}}.

\bibitem[Luo et~al.(2019)Luo, Sun, Toh, and Xiu]{Luo2019Solving}
Luo, Z., Sun, D., Toh, K., and Xiu, N.
\newblock Solving the {OSCAR} and {SLOPE} models using a semismooth
  newton-based augmented lagrangian method.
\newblock \emph{J. Mach. Learn. Res.}, 20:\penalty0 106:1--106:25, 2019.
\newblock URL \url{https://jmlr.org/papers/v20/18-172.html}.

\bibitem[Mishchenko(2023)]{mishchenko2023regularized}
Mishchenko, K.
\newblock {Regularized Newton Method with Global
  \({\boldsymbol{\mathcal{O}(1/{k}^2)}}\) Convergence}.
\newblock \emph{SIAM Journal on Optimization}, 33\penalty0 (3):\penalty0
  1440--1462, 2023.
\newblock \doi{10.1137/22M1488752}.
\newblock URL \url{https://doi.org/10.1137/22M1488752}.

\bibitem[Monteiro \& Svaiter(2013)Monteiro and
  Svaiter]{monteiro2013accelerated}
Monteiro, R. and Svaiter, B.
\newblock An accelerated hybrid proximal extragradient method for convex
  optimization and its implications to second-order methods.
\newblock \emph{SIAM Journal on Optimization}, 23\penalty0 (2):\penalty0
  1092--1125, 2013.
\newblock \doi{10.1137/110833786}.
\newblock URL \url{https://doi.org/10.1137/110833786}.

\bibitem[Mordukhovich(2006)]{Mordukhovich1}
Mordukhovich, B.~S.
\newblock \emph{Variational analysis and generalized differentiation. {I}},
  volume 330 of \emph{Grundlehren der mathematischen Wissenschaften
  [Fundamental Principles of Mathematical Sciences]}.
\newblock Springer-Verlag, Berlin, 2006.
\newblock Basic theory.

\bibitem[Mordukhovich(2024)]{mordukhovich2024second}
Mordukhovich, B.~S.
\newblock \emph{Second-order variational analysis in optimization, variational
  stability, and control: theory, algorithms, applications}.
\newblock Springer Nature, 2024.

\bibitem[Nesterov(2008)]{nesterov2008accelerating}
Nesterov, Y.
\newblock {Accelerating the cubic regularization of Newton's method on convex
  problems}.
\newblock \emph{Mathematical Programming}, 112\penalty0 (1):\penalty0 159--181,
  2008.
\newblock ISSN 1436-4646.
\newblock \doi{10.1007/s10107-006-0089-x}.
\newblock URL \url{https://doi.org/10.1007/s10107-006-0089-x}.

\bibitem[Nesterov(2018)]{nesterov2018lectures}
Nesterov, Y.
\newblock \emph{Lectures on convex optimization}, volume 137.
\newblock Springer International Publishing, 2018.

\bibitem[Nesterov \& Polyak(2006)Nesterov and Polyak]{nesterov2006cubic}
Nesterov, Y. and Polyak, B.~T.
\newblock {Cubic regularization of Newton method and its global performance}.
\newblock \emph{Mathematical programming}, 108\penalty0 (1):\penalty0 177--205,
  2006.

\bibitem[Ouyang \& Milzarek(2025)Ouyang and Milzarek]{ouyang2025trust}
Ouyang, W. and Milzarek, A.
\newblock {A trust region-type normal map-based semismooth Newton method for
  nonsmooth nonconvex composite optimization}.
\newblock \emph{Mathematical Programming}, 212\penalty0 (1):\penalty0 389--435,
  2025.

\bibitem[Overton(1992)]{overton1992large}
Overton, M.~L.
\newblock Large-scale optimization of eigenvalues.
\newblock \emph{SIAM Journal on Optimization}, 2\penalty0 (1):\penalty0
  88--120, 1992.

\bibitem[Pang \& Qi(1995)Pang and Qi]{pang1995globally}
Pang, J.-S. and Qi, L.
\newblock {A globally convergent Newton method for convex SC1 minimization
  problems}.
\newblock \emph{Journal of Optimization Theory and Applications}, 85\penalty0
  (3):\penalty0 633--648, 1995.

\bibitem[Pesquet et~al.(2021)Pesquet, Repetti, Terris, and
  Wiaux]{pesquet2021learning}
Pesquet, J.-C., Repetti, A., Terris, M., and Wiaux, Y.
\newblock Learning maximally monotone operators for image recovery.
\newblock \emph{SIAM Journal on Imaging Sciences}, 14\penalty0 (3):\penalty0
  1206--1237, 2021.

\bibitem[P{\"o}tzl et~al.(2022)P{\"o}tzl, Schiela, and Jaap]{potzl2022second}
P{\"o}tzl, B., Schiela, A., and Jaap, P.
\newblock {Second order semi-smooth Proximal Newton methods in Hilbert spaces}.
\newblock \emph{Computational Optimization and Applications}, 82\penalty0
  (2):\penalty0 465--498, 2022.

\bibitem[P{\"o}tzl et~al.(2024)P{\"o}tzl, Schiela, and Jaap]{potzl2024inexact}
P{\"o}tzl, B., Schiela, A., and Jaap, P.
\newblock {Inexact proximal Newton methods in Hilbert spaces}.
\newblock \emph{Computational Optimization and Applications}, 87\penalty0
  (1):\penalty0 1--37, 2024.

\bibitem[Qi(1995)]{qi1995trust}
Qi, L.
\newblock Trust region algorithms for solving nonsmooth equations.
\newblock \emph{SIAM Journal on Optimization}, 5\penalty0 (1):\penalty0
  219--230, 1995.

\bibitem[Qi(1993)]{MR1250115}
Qi, L.~Q.
\newblock Convergence analysis of some algorithms for solving nonsmooth
  equations.
\newblock \emph{Math. Oper. Res.}, 18\penalty0 (1):\penalty0 227--244, 1993.
\newblock ISSN 0364-765X,1526-5471.
\newblock \doi{10.1287/moor.18.1.227}.
\newblock URL \url{https://doi.org/10.1287/moor.18.1.227}.

\bibitem[Qi \& Sun(1993)Qi and Sun]{MR1216791}
Qi, L.~Q. and Sun, J.
\newblock A nonsmooth version of {N}ewton's method.
\newblock \emph{Math. Programming}, 58\penalty0 (3):\penalty0 353--367, 1993.
\newblock ISSN 0025-5610,1436-4646.
\newblock \doi{10.1007/BF01581275}.
\newblock URL \url{https://doi.org/10.1007/BF01581275}.

\bibitem[Rickmann et~al.(2025)Rickmann, Herzog, and
  Herberg]{RickmannHerzogHeberg}
Rickmann, H., Herzog, R., and Herberg, E.
\newblock {Global Convergence of Semismooth Newton Methods for Quadratic
  Problems}.
\newblock \emph{GAMM Archive for Students}, 7\penalty0 (1):\penalty0 16, 2025.
\newblock \doi{10.14464/gammas.v7i1.810}.
\newblock URL
  \url{https://www.bibliothek.tu-chemnitz.de/ojs/index.php/GAMMAS/article/view/810}.

\bibitem[Rockafellar \& Wets(1998)Rockafellar and Wets]{RockafellarWets}
Rockafellar, R.~T. and Wets, R. J.-B.
\newblock \emph{Variational analysis}, volume 317 of \emph{Grundlehren der
  mathematischen Wissenschaften [Fundamental Principles of Mathematical
  Sciences]}.
\newblock Springer-Verlag, Berlin, 1998.
\newblock ISBN 3-540-62772-3.
\newblock \doi{10.1007/978-3-642-02431-3}.
\newblock URL \url{https://doi.org/10.1007/978-3-642-02431-3}.

\bibitem[Shamanskii(1967)]{Shamanskii1967AMO}
Shamanskii, V.~E.
\newblock A modification of newton's method.
\newblock \emph{Ukrainian Mathematical Journal}, 19:\penalty0 118--122, 1967.
\newblock URL \url{https://api.semanticscholar.org/CorpusID:122185269}.

\bibitem[Shi et~al.(2020)Shi, Huang, Jiao, and Yang]{shi2020semismooth}
Shi, Y., Huang, J., Jiao, Y., and Yang, Q.
\newblock {A Semismooth Newton Algorithm for High-Dimensional Nonconvex Sparse
  Learning}.
\newblock \emph{IEEE Transactions on Neural Networks and Learning Systems},
  31\penalty0 (8):\penalty0 2993--3006, 2020.
\newblock \doi{10.1109/TNNLS.2019.2935001}.

\bibitem[Sun \& Sun(2002)Sun and Sun]{sun_sun_2002}
Sun, D. and Sun, J.
\newblock Semismooth matrix-valued functions.
\newblock \emph{Mathematics of Operations Research}, 27\penalty0 (1):\penalty0
  150--169, 2002.
\newblock \doi{10.1287/moor.27.1.150.342}.

\bibitem[Tang et~al.(2020)Tang, Wang, Sun, and Toh]{Tang2020Sparse}
Tang, P., Wang, C., Sun, D., and Toh, K.-C.
\newblock {A Sparse Semismooth Newton Based Proximal Majorization-Minimization
  Algorithm for Nonconvex Square-Root-Loss Regression Problems}.
\newblock \emph{Journal of Machine Learning Research}, 21\penalty0
  (226):\penalty0 1--38, 2020.
\newblock URL \url{http://jmlr.org/papers/v21/19-247.html}.

\bibitem[Ulbrich(2002)]{MR1972217}
Ulbrich, M.
\newblock Semismooth {N}ewton methods for operator equations in function
  spaces.
\newblock \emph{SIAM J. Optim.}, 13\penalty0 (3):\penalty0 805--842, 2002.
\newblock ISSN 1052-6234,1095-7189.
\newblock \doi{10.1137/S1052623400371569}.
\newblock URL \url{https://doi.org/10.1137/S1052623400371569}.

\bibitem[Ulbrich(2011)]{ulbrich2011semismooth}
Ulbrich, M.
\newblock \emph{Semismooth {N}ewton methods for variational inequalities and
  constrained optimization problems in function spaces}, volume~11 of
  \emph{MOS-SIAM Series on Optimization}.
\newblock Society for Industrial and Applied Mathematics (SIAM), Philadelphia,
  PA; Mathematical Optimization Society, Philadelphia, PA, 2011.
\newblock ISBN 978-1-611970-68-5.
\newblock \doi{10.1137/1.9781611970692}.
\newblock URL \url{https://doi.org/10.1137/1.9781611970692}.

\bibitem[Wachsmuth(2025)]{wachsmuth2025globalized}
Wachsmuth, D.
\newblock {A globalized inexact semismooth Newton for strongly convex optimal
  control problems}.
\newblock \emph{arXiv preprint arXiv:2503.21612}, 2025.

\bibitem[Wright(2006)]{wright2006numerical}
Wright, S.~J.
\newblock Numerical optimization, 2006.

\bibitem[Yin \& Li(2019{\natexlab{a}})Yin and Li]{yin2019}
Yin, J. and Li, Q.
\newblock A semismooth {N}ewton method for support vector classification and
  regression.
\newblock \emph{Computational Optimization and Applications}, 73\penalty0
  (2):\penalty0 477--508, 2019{\natexlab{a}}.
\newblock \doi{10.1007/s10589-019-00075-z}.

\bibitem[Yin \& Li(2019{\natexlab{b}})Yin and Li]{yin2019semismooth}
Yin, J. and Li, Q.
\newblock {A semismooth Newton method for support vector classification and
  regression}.
\newblock \emph{Computational Optimization and Applications}, 73\penalty0
  (2):\penalty0 477--508, 2019{\natexlab{b}}.

\bibitem[Yuan et~al.(2018)Yuan, Sun, and Toh]{pmlr-v80-yuan18a}
Yuan, Y., Sun, D., and Toh, K.-C.
\newblock An efficient semismooth {N}ewton based algorithm for convex
  clustering.
\newblock In Dy, J. and Krause, A. (eds.), \emph{Proceedings of the 35th
  International Conference on Machine Learning}, volume~80 of \emph{Proceedings
  of Machine Learning Research}, pp.\  5718--5726. PMLR, 10--15 Jul 2018.
\newblock URL \url{https://proceedings.mlr.press/v80/yuan18a.html}.

\bibitem[Zhang et~al.(2024)Zhang, Cui, Sen, and Toh]{Zhang2024Efficient}
Zhang, Y., Cui, Y., Sen, B., and Toh, K.-C.
\newblock On efficient and scalable computation of the nonparametric maximum
  likelihood estimator in mixture models.
\newblock \emph{Journal of Machine Learning Research}, 25\penalty0
  (8):\penalty0 1--46, 2024.
\newblock URL \url{http://jmlr.org/papers/v25/22-1120.html}.

\end{thebibliography}
\bibliographystyle{icml2025}

\newpage
\appendix
\onecolumn

\section{Discussions}
\subsection{Newton differentiability and semismoothness}
\label{S:Newton_derivative}
For a function $f \in C^{1}(\dom f, \overline{\mathbb{R}})$,  we say that $f'$ is \textit{Newton differentiable} at a point $x\in \dom f$ if there exists an open subset $U\subset \dom f $ with $x \in U$ and a family of maps $H\colon U \to \mathcal{L}(\HH, \HH^*)$ such that 
\begin{equation}
\label{eq:semismoothness_def}
    \lim_{h \in \HH, h \to 0} \frac{\norm{f'(x+h)-f'(x)-H(x+h)h}{*}}{\norm{h}{}} = 0.
\end{equation}
(For Fréchet differentiability, one should replace  $H(x+h)$ above with $H(x)$.) We also say that $f'$ is Newton differentiable at $x$ with respect to $H$. If $f'$ is Newton differentiable for every $x\in U$, we say that it is Newton differentiable on $U$. %
\begin{itemize}
\item Newton differentiability is defined in some works in terms of set-valued derivatives: a map $T\colon X \to Y$ is said to be Newton differentiable with Newton derivative $G\colon X \rightrightarrows \mathcal{L}(X,Y)$ if
\[\lim_{h \to 0} \sup_{M \in G(x+h)}\frac{\norm{T(x+h)-T(x)-Mh}{}}{\norm{h}{}} =0.\]
The definition of Newton derivative we use  results from taking a selection from the set-valued map $G$ at each point. Our Newton derivative can be viewed as an abuse of notation where a particular realization of the derivative has been chosen; in a sense then, our algorithm also can handle set-valued derivatives. 
\item A commonly-used definition of semismoothness in literature is the following. %
The map $T$ is \emph{semismooth} at $x$ if it is locally Lipschitz, directionally differentiable at $x$, and for any $V \in \partial T(x+h)$ with  $h \to 0$, we have
\[T(x+h)-T(x) - Vh = \so(h),\]
where $\partial T$ refers to the generalized or Clarke Jacobian of $T$. See for example \cite{pmlr-v80-yuan18a} or \cite{lin_cuturi_jordan_2024}.

We did not specify which particular maps $H(x)$ are used in our definition of Newton differentiability. We could choose, for example, a single-valued selection of Clarke’s Jacobian as generalized derivative $H(x)$ and in this way we recover semismoothness (provided the Lipschitz condition holds). Moreover, if $f'$ is semismooth, every single-valued selection of Clarke’s Jacobian works as $H$ in the above definition.     
    \item In light of these facts, and following established nomenclature within the community, we use the terms semismoothness and Newton differentiability interchangeably. At the same time, let us underline that our paper covers a much more general setting that extends the notion of differentiability even further than semismooth functions. Indeed, despite being \emph{less} restrictive than other definitions, Newton differentiability is sufficient for a local superlinear convergence analysis, as we see in \cref{sec:fast}.
\end{itemize}
For more details and comparisons, we refer to \cite{RickmannHerzogHeberg}.  

\subsection{Further discussion of our algorithm}
A key feature of our method is that we allow for lazy Hessian updates, with fresh Hessians being taken only every $m$ iterations. To be precise, we utilize $H(x_0)$ in the trial step \eqref{eq:prox_step_alg}  for iterates $k=0, \hdots m-1$, then $H(x_m)$ for iterates $k=m, \hdots, 2m-1$, and so on. This is, as mentioned, the same setup considered in \cite{doikov2023second} (but under Lipschitz Hessian assumptions there). 

Let us note that when $\psi \equiv 0$, the trial step \eqref{eq:prox_step_alg} simplifies to 
\[x_+ = x_k - (H(x_{\pi(k)}) + \lambda\mathcal{R})^{-1}f'(x_k)\]
with $\lambda = 4^{j_k}\Lambda_k \norm{f'(x_k)}{}^p.$  We emphasize that gradient regularization is essential (i.e., $p$ is not allowed to be $0$) for our $\gamma$-order semismoothness result \cref{thm:fast_local_gamma}. Namely, it is needed in the proof of \cref{prop:lambda_k_to_zero} (given below) in the inductive step. The use of gradient regularization in our algorithm is inspired by the paper \cite{doikov2024super}. Yet, one key difference between our algorithm and theirs is the second stopping condition in \eqref{eq:stop_cond_alg}, which is crucial to obtain global convergence of the iterates under the PL condition, see the proof of \cref{thm:global_convergence_nonconv_PL} below.

\begin{table}[t]
\centering
\caption{Positioning against closely related methods.}
\label{tab:method-comparison-text}
\begin{tabular}{>{\centering\arraybackslash}m{2.8cm}ccccc}
\toprule
Method
& Global rates%
& \parbox[c]{2cm}{\centering $\gamma$-order local\\ rates}
& \parbox[c]{2cm}{\centering Lazy Hessian \\updates}
& \parbox[c]{2cm}{\centering Infinite dimensional \\setting} 
& \parbox[c]{2cm}{\centering $f'$ only semismooth \\allowed} \\
\midrule
\leapssn \cite{LeapSSN} 
& \greencheck
& \redcross
& \redcross
& \greencheck 
& \greencheck \\
\midrule

Method from \cite{doikov2023second}
& \;\greencheck $^\star$
& n/a
& \greencheck
& \redcross 
& \redcross \\
\midrule

Super-universal Newton \cite{doikov2024super}
& \;\greencheck 
& \greencheck $^\star$ 
& \redcross 
& \redcross 
& \redcross \\
\midrule

\gladssn (this work)
& \greencheck
& \greencheck%
& \greencheck
& \greencheck
& \greencheck\\
\bottomrule
\end{tabular}

 \vspace{2pt}
 {\footnotesize $^{\star}$ In \cite{doikov2023second} global rates are proven under the additional assumption that $f \in C^2$ and $\nabla^2 f$ is Lipschitz continuous. \cite{doikov2024super} obtain $\gamma$-order local rates under $\gamma$-H\"older Hessian with $\gamma>0$ or under H\"older third derivative.}
\end{table}
We conclude this subsection by summarizing what our paper achieves in comparison to the most closely-related existing works in the literature, see \cref{tab:method-comparison-text}.
\subsection{Further discussion of results}

According to \cref{thm:global_superlinear}, the whole convergence picture is as follows. From any starting point, the method converges linearly to a solution $x^*$ thanks to the PL condition. Since the linear convergence $x_k\to x^*$ is guaranteed, at some point $x_k$ starts to belong to the ball around $x^*$ where the local strong convexity of \cref{ass:strong_convexity} holds. The asymptotic semismoothness \cref{ass:semismoothness} and DM conditions also start to work after some iteration when $x_k$ is sufficiently close to $x^*$. All this implies that $\lambda_k\to0$ and the method starts to converge superlinearly when $\lambda_k$ is sufficiently small. In that sense, the superlinear convergence is local and asymptotic, but the linear rate is global and nonasymptotic. \cref{thm:global_convergence_c2} complements the convergence picture by quantifying the order of local superlinear convergence under $\gamma$-order semismoothness in \cref{ass:gamma_order_semismoothness}.

We would like to underline that \cref{lm:lemlazyDMHolds} implies that the results on superlinear convergence in \cref{thm:fast_local} hold also in the infinite-dimensional $C^2$ setting. This also means that we have a hierarchy of results: for $f \in C^{1,1}$, we have results on linear and sublinear convergence rates, for $f \in C^2$, these results asymptotically improve, and for $f \in C^{2,2}$, it is possible to obtain improved convergence rates. The intermediate case $f \in C^2$ is not well explored in the literature and our paper gives some understanding of how algorithms should work in this situation.

To simplify the presentation, we assumed local strong convexity in \cref{ass:strong_convexity}. This can be relaxed to local convexity to prove \cref{prop:lambda_k_to_zero} and to $F$ satisfying locally the PL condition, quadratic growth, and $x^*$ being isolated to prove the inequalities in \cref{thm:fast_local,thm:fast_local_gamma}.

Regarding our results in the finite-dimensional $C^2$ setting, the reader might wonder why we have both  \cref{thm:global_convergence_sublinear} (in particular, the statement in parentheses) and \cref{thm:global_convergence_c2}. The reason is that in the former, the PL inequality is not needed, unlike in \cref{thm:global_convergence_c2}.

Finally, we conclude by mentioning that this work is a first step in deriving an algorithm that possesses accelerated local rates and takes into account that real-world problems may have Hessians that are costly to evaluate (this is indeed the case in our neural network example of \cref{sec:numerics_NN}). There are, of course, many additional aspects that one could do to increase the efficiency of the proposed method, e.g., by employing such tools as sketching or other randomization techniques, quasi-Newton updates, etc. We leave these interesting aspects for future work.

\section{Technical lemmas}\label{sec:appendix_helper}
In this section, we collect several technical results that are used later to prove our main results.

For further convenience, using the definition of the point $x_{k+1}$ in line 9 of \cref{alg:proximal_newton} and the shortcut notation \eqref{eq:F_r_g_notaion}, the acceptance conditions \eqref{eq:stop_cond_alg} can be written as 
\begin{align}   
\langle F'(x_{k+1}),x_k -x_{k+1}\rangle &\geq \frac{1}{2\lambda_k}\norm{F'(x_{k+1})}{*}^2= \frac{1}{2\lambda_k}g_{k+1}^2, \label{eq:stop_cond_alg_new_1} \\
    F_k-F_{k+1}=F(x_k)-F(x_{k+1})&\geq \frac{\lambda_k}{4}\norm{x_{k+1}-x_k}{}^2= \frac{\lambda_k}{4}r_k^2.\label{eq:stop_cond_alg_new_2}
\end{align}

 We will use the fact that our special choice of $F'(x)$ allows us to control the step length. Namely, if $H(z) \succeq \mu I$ for some $\mu \in \mathbb{R}$ and $\lambda+\mu>0$, %
\citep[Lemma~1]{doikov2024super}
 gives 
\begin{equation} 
    \norm{x_+-x}{} \leq  \frac{\norm{F'(x)}{*}}{\lambda + \mu}.\label{eq:rk_leq_gk_over_lambda_k_new}
\end{equation}

The following lemma collects several consequences of the stopping conditions. For convenience, we repeat each statement in full and also in shortcut notation \eqref{eq:F_r_g_notaion}.
\begin{lem}\label{lem:functionValuesGradientRelation}
Let, at iteration $k\geq 0 $ of \cref{alg:proximal_newton}, acceptance criteria \eqref{eq:stop_cond_alg} hold. Then, we have %
    \begin{align}
    F^*\leq F(x_{k+1})\leq F(x_k) &\qquad\text{or equivalently}\quad 0 \leq F_{k+1}\leq F_k, \label{eq:F_k_decr}\\
    \norm{F'(x_{k+1})}{*} \leq 2\lambda_k\norm{x_k-x_{k+1}}{} &\qquad\text{or equivalently}\quad g_{k+1} \leq 2\lambda_kr_k,\label{eq:g_k_leq_r_k}\\
    \norm{F'(x_{k})}{*} \leq 2 \norm{F'(x_{k-1})}{*} &\qquad\text{or equivalently} \quad g_k \leq 2g_{k-1},\label{eq:g_k_leq_g_k-1}\\
    F(x_k)-F(x_{k+1}) \geq \frac{1}{16\lambda_k}\norm{F'(x_{k+1})}{*}^2 &\qquad\text{or equivalently}\quad F_k-F_{k+1}=F(x_k)-F(x_{k+1}) \geq \frac{1}{16\lambda_k}g_{k+1}^2.\label{eq:functionValuesGradientRelation}
\end{align}
\end{lem}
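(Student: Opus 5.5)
The four inequalities follow directly from the acceptance conditions \eqref{eq:stop_cond_alg_new_1}--\eqref{eq:stop_cond_alg_new_2}, combined with Cauchy--Schwarz. I will prove them in the stated order, since each later one uses the earlier ones.

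For \eqref{eq:F_k_decr}: by \eqref{eq:stop_cond_alg_new_2}, $F_k - F_{k+1} \geq \frac{\lambda_k}{4} r_k^2 \geq 0$, because $\lambda_k>0$ (recall $g_k>0$ and $\Lambda_k>0$). Also $F_{k+1}\geq 0$ holds because $F^*$ is the global minimal value by \cref{ass:basic}(iii).

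For \eqref{eq:g_k_leq_r_k}: bound the left-hand side of \eqref{eq:stop_cond_alg_new_1} from above by the duality/Cauchy--Schwarz estimate $\langle F'(x_{k+1}),x_k-x_{k+1}\rangle \leq g_{k+1} r_k$. This gives $\frac{1}{2\lambda_k} g_{k+1}^2 \leq g_{k+1} r_k$. If $g_{k+1}=0$ the claim is trivial; otherwise divide by $g_{k+1}$ to get $g_{k+1}\leq 2\lambda_k r_k$.

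For \eqref{eq:functionValuesGradientRelation}: square \eqref{eq:g_k_leq_r_k} to obtain $r_k^2 \geq g_{k+1}^2/(4\lambda_k^2)$, and insert this into \eqref{eq:stop_cond_alg_new_2}. This yields $F_k-F_{k+1}\geq \frac{\lambda_k}{4}\cdot\frac{g_{k+1}^2}{4\lambda_k^2} = \frac{1}{16\lambda_k}g_{k+1}^2$.

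For \eqref{eq:g_k_leq_g_k-1}: this is the only statement needing a step-length bound. Write it at index $k-1$, so the goal is $g_k\leq 2g_{k-1}$. By \eqref{eq:g_k_leq_r_k} at index $k-1$ we have $g_k \leq 2\lambda_{k-1} r_{k-1}$, so it suffices to show $r_{k-1}\leq g_{k-1}/\lambda_{k-1}$. I would get this from \eqref{eq:rk_leq_gk_over_lambda_k_new} with $\mu=0$, which requires $H(x_{\pi(k-1)})\succeq 0$. That positive semidefiniteness is the main obstacle, since it is not part of \cref{ass:basic}. If it is not available, I would instead derive the bound directly from the optimality condition \eqref{eq:PLMSN_step_optimality} and monotonicity of $\partial\psi$, testing against $x_+-x_k$ and using $\psi'(x_k)$. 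This gives
\[
\lambda\|x_+-x_k\|^2 + \langle H(x_{\pi(k)})(x_+-x_k),x_+-x_k\rangle \leq \|F'(x_k)\|_*\,\|x_+-x_k\|
\]
up to the gradient-difference term. With $H\succeq 0$ this yields $r\leq g/\lambda$. Without it, I would fall back on the paper's Lemma 1 of \cite{doikov2024super} in the form stated, assuming the curvature condition holds where the lemma is invoked. Given $r_{k-1}\leq g_{k-1}/\lambda_{k-1}$, the bound follows: $g_k\leq 2\lambda_{k-1}\cdot g_{k-1}/\lambda_{k-1}=2g_{k-1}$.
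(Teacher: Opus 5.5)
Your arguments for \eqref{eq:F_k_decr}, \eqref{eq:g_k_leq_r_k} and \eqref{eq:functionValuesGradientRelation} are exactly the paper's. For \eqref{eq:g_k_leq_g_k-1} you also follow the paper, which combines \eqref{eq:g_k_leq_r_k} at index $k-1$ with the step bound \eqref{eq:rk_leq_gk_over_lambda_k_new}. The obstacle you flag there is real, and neither of your fallbacks removes it. The bound \eqref{eq:rk_leq_gk_over_lambda_k_new} gives $r_{k-1}\le g_{k-1}/\lambda_{k-1}$ only with $\mu=0$, i.e.\ only if $H(x_{\pi(k-1)})\succeq 0$. The paper's proof uses it this way without checking that hypothesis, which is not part of \cref{ass:basic}. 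Your monotonicity computation is the right self-contained version of the step bound. In fact it has no gradient-difference term: since $F'(x_k)=f'(x_k)+\psi'(x_k)$ uses the carried subgradient, it gives exactly $\lambda\|d\|^2+\langle H(x_{\pi(k)})d,d\rangle\le g_k\|d\|$ for $d=x_+-x_k$. Under \cref{ass:basic} alone, however, you only know $\langle Hd,d\rangle\ge-\tfrac L2\|d\|^2$. That yields $r_{k-1}\le g_{k-1}/(\lambda_{k-1}-L/2)$ and hence only $g_k\le\frac{2\lambda_{k-1}}{\lambda_{k-1}-L/2}\,g_{k-1}$ for $\lambda_{k-1}>L/2$.

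This cannot be sharpened, because \eqref{eq:g_k_leq_g_k-1} is false without a curvature hypothesis. Take $\HH=\mathbb{R}$, $\psi\equiv0$, $L=2$, and set $f'(x)=-x$ on $[-1,1]$, $f'(x)=x-2$ for $x\ge1$, $f'(x)=x+2$ for $x\le-1$, with $H=f''$. Then $f'$ is $1$-Lipschitz, $|H|\le1=L/2$, and $F^*=-1$ is attained at $\pm2$, so \cref{ass:basic} holds. Let $x_0=0.1$ and choose $\Lambda_0$ so that $\Lambda_0g_0^p=3/2$. The trial point is $x_+=x_0-f'(x_0)/(H(x_0)+3/2)=0.3$, and both acceptance tests hold at $j_0=0$: $\langle f'(x_+),x_0-x_+\rangle=0.06\ge\tfrac13\cdot0.09$ and $F(x_0)-F(x_+)=0.04\ge\tfrac38\cdot0.04$. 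Hence $x_1=0.3$ and $g_1=0.3=3g_0$. More generally, on $f=-\tfrac a2x^2$ every $\lambda>a$ is accepted and $g_1/g_0=\lambda/(\lambda-a)$, which is unbounded as $\lambda\downarrow a$.

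So what is missing is an extra hypothesis, not a cleverer argument. Either state \eqref{eq:g_k_leq_g_k-1} under $H(x_{\pi(k-1)})\succeq0$ (which holds, e.g., for $k\ge K$ under \cref{ass:strong_convexity}), or replace it by the weaker bound above. The paper's use of \eqref{eq:g_k_leq_g_k-1} in the proof of \cref{lem:acceptance_inner_nonconv} inherits the same issue.
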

\begin{proof}
The inequalities in \eqref{eq:F_k_decr} follow from \eqref{eq:stop_cond_alg_new_2}, \eqref{eq:F_r_g_notaion}, and \cref{ass:basic}. The inequalities in \eqref{eq:g_k_leq_r_k} follow from \eqref{eq:stop_cond_alg_new_1} and \eqref{eq:F_r_g_notaion}. \eqref{eq:g_k_leq_g_k-1} follow from \eqref{eq:g_k_leq_r_k}, \eqref{eq:F_r_g_notaion}, and \eqref{eq:rk_leq_gk_over_lambda_k_new}.
Combining \eqref{eq:stop_cond_alg_new_2}, \eqref{eq:F_r_g_notaion}, and \eqref{eq:g_k_leq_r_k}, we obtain \eqref{eq:functionValuesGradientRelation}.
\end{proof}
The next result establishes a sensitivity estimate of how the point $x_+(\lambda,x,z)$ in \eqref{eq:PLMSN_step} varies for fixed $x$ when $\lambda$ changes. We omit the proof because it is almost identical to that of \citep[Lemma 2.5]{LeapSSN}.
\begin{lem}\label{lem:lambda_dependence}
Let $x \in \dom f$ be such that $H(z) \succeq 0$. Then, the following inequality holds for all $0 < \lambda \leq \lambda'$:
  \begin{equation}
       \|x_+(\lambda,x,z) - x_+(\lambda',x,z) \| \leq \frac{\lambda' - \lambda}{\lambda' } \| x -x_+(\lambda,x,z)\|.\label{eq:lambda_dependence}
  \end{equation}
\end{lem}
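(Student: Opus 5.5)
The plan is to compare the first-order optimality conditions of the two subproblems \eqref{eq:PLMSN_step} and use the monotonicity of $\partial\psi$. Fix $x$ and $z$, and write $u\coloneqq x_+(\lambda,x,z)$ and $v\coloneqq x_+(\lambda',x,z)$. By \eqref{eq:PLMSN_step_optimality},
\begin{align*}
 -f'(x)-H(z)(u-x)-\lambda\mathcal{R}(u-x) &\in \partial\psi(u),\\
 -f'(x)-H(z)(v-x)-\lambda'\mathcal{R}(v-x) &\in \partial\psi(v).
\end{align*}
Since $\psi$ is convex, proper and lower semicontinuous, $\partial\psi$ is monotone. Subtracting the two inclusions and pairing the difference with $u-v$ therefore gives
\[
\langle H(z)(v-u),u-v\rangle+\langle \lambda'\mathcal{R}(v-x)-\lambda\mathcal{R}(u-x),u-v\rangle\geq 0 .
\]
The $f'(x)$ terms cancel because both subproblems are built at the same base point $x$.

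The key algebraic step is the decomposition
\[
\lambda'(v-x)-\lambda(u-x)=\lambda'(v-u)+(\lambda'-\lambda)(u-x).
\]
It separates a term that is coercive in $u-v$ from a term proportional to $u-x$. With $\langle\mathcal{R}h,h\rangle=\|h\|^2$, the inequality becomes
\[
\langle H(z)(u-v),u-v\rangle+\lambda'\|u-v\|^2\leq(\lambda'-\lambda)\langle\mathcal{R}(u-x),u-v\rangle .
\]

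Next use $H(z)\succeq 0$, which means $\langle H(z)h,h\rangle\geq0$ for all $h$, to drop the first term on the left. Then apply Cauchy--Schwarz together with $\|\mathcal{R}h\|_*=\|h\|$ to the right-hand side. This yields
\[
\lambda'\|u-v\|^2\leq(\lambda'-\lambda)\|u-x\|\,\|u-v\|.
\]
If $u=v$ there is nothing to prove. Otherwise, divide by $\lambda'\|u-v\|>0$ to obtain \eqref{eq:lambda_dependence}.

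There is no real obstacle here. The only point that needs care is the choice of decomposition in the second step. Splitting the regularization difference around $u-x$ rather than $v-x$ is what puts the factor $\|x-u\|$, with $u$ the minimizer for the smaller parameter $\lambda$, on the right-hand side, as the statement requires. It also leaves $\lambda'$, not $\lambda$, as the coefficient of $\|u-v\|^2$, which produces the denominator $\lambda'$.
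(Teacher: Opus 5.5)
Your proof is correct and complete. You subtract the two stationarity conditions \eqref{eq:PLMSN_step_optimality}, use monotonicity of $\partial\psi$ and the splitting $\lambda'(v-x)-\lambda(u-x)=\lambda'(v-u)+(\lambda'-\lambda)(u-x)$, drop the $H(z)$ term by $H(z)\succeq 0$, and apply Cauchy--Schwarz; this gives exactly \eqref{eq:lambda_dependence}.

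The paper omits its own proof and defers to Lemma~2.5 of \cite{LeapSSN}, so there is no in-paper argument to compare against; your route is the natural one for this estimate. It relies only on the stationarity characterization of $x_+$, which is how the algorithm defines the step, so it does not need $H(z)$ to be self-adjoint.
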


The next result can be found in the proof of Lemma 6.3 of \cite{LeapSSN}. We state it here for convenience, and omit the proof.
\begin{lem}\label{lem:helper_lambdak_zero}
         If there exists a sequence $(\varepsilon_k)_{k \in \mathbb{N}} \subset \R_+$ such that $\varepsilon_k \to 0$ and, for sufficiently large $k$,
            \begin{align*}
        \lambda_k \leq \begin{cases}
        \varepsilon_k, & j_k > 0, \\
            \frac{\lambda_{k-1}}{2}, & j_k = 0,
        \end{cases}
    \end{align*}
        then, we have $\lambda_k \to 0$ as $k \to \infty$. 
        \end{lem}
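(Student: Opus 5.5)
The plan is a direct $\varepsilon$-argument that uses only the two-case recursion; no further properties of \cref{alg:proximal_newton} are needed beyond $\lambda_k \geq 0$. Fix an arbitrary $\delta > 0$. Since $\varepsilon_k \to 0$, I will choose $K_\delta$ so large that the assumed case-wise bound holds for all $k \geq K_\delta$ and, in addition, $\varepsilon_k \leq \delta$ for all $k \geq K_\delta$. The goal is to show that $\lambda_k \leq \delta$ for all sufficiently large $k$. Since $\delta$ is arbitrary, this gives $\limsup_k \lambda_k \leq 0$, and together with $\lambda_k \geq 0$ it yields $\lambda_k \to 0$.

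The key step is an invariance claim: if $\lambda_{k_0} \leq \delta$ for some $k_0 \geq K_\delta$, then $\lambda_k \leq \delta$ for every $k \geq k_0$. I will prove this by induction on $k$. Suppose $\lambda_{k-1} \leq \delta$. If $j_k > 0$, then $\lambda_k \leq \varepsilon_k \leq \delta$. If $j_k = 0$, then $\lambda_k \leq \lambda_{k-1}/2 \leq \delta/2 \leq \delta$. In both cases the bound propagates.

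It then remains to show that such a $k_0$ exists. I split into two cases.
\begin{enumerate}[label=(\alph*)]
\item There is some $k_0 \geq K_\delta$ with $j_{k_0} > 0$. Then $\lambda_{k_0} \leq \varepsilon_{k_0} \leq \delta$ directly.
\item $j_k = 0$ for every $k \geq K_\delta$. Iterating the halving bound gives $\lambda_k \leq 2^{-(k-K_\delta)} \lambda_{K_\delta}$. Since $\lambda_{K_\delta}$ is a finite number (it is also bounded by $\overline{\lambda}$ from \eqref{eq:lambda_bound}), the right-hand side drops below $\delta$ for large $k$.
\end{enumerate}
In either case the invariance claim applies from $k_0$ onward.

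I do not expect any real obstacle. The only point requiring care is the one handled by the invariance step: an accepted step with $j_k = 0$ following a step with $j_{k-1} > 0$ can only shrink $\lambda$. So the sequence can never climb back above $\delta$ once it has dropped below it, even though the two cases of the recursion alternate arbitrarily.
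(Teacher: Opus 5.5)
Your proof is correct. The invariance step (once $\lambda_{k_0}\le\delta$ with $k_0\ge K_\delta$, both branches keep $\lambda_k\le\delta$), combined with the dichotomy ``some $j_k>0$ beyond $K_\delta$'' versus ``eventually pure halving,'' is exactly the elementary argument this lemma needs. The paper gives no proof here and defers to the proof of Lemma 6.3 in \cite{LeapSSN}, so your argument serves as a self-contained substitute; equivalently, you could write $\lambda_k\le\max\{\varepsilon_k,\lambda_{k-1}/2\}$ for large $k$ and iterate to $\lambda_k\le\max\{\sup_{i\ge K}\varepsilon_i,\,2^{-(k-K)}\lambda_K\}$, which avoids the case split.
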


\section{Proofs for \cref{sec:global} on global convergence rates}

We first prove the result with the upper bound on the sequence of the regularization parameters $(\lambda_k)_{k \in \mathbb{N}}$.

\lemacceptanceinnernonconv*

\begin{proof}%
Let us denote for brevity $x_+(\lambda)\coloneqq x_+(\lambda,x_k,x_{ \pi(k)})$.
Applying items (i) and (iv) of \cref{ass:basic}, we obtain, for any $\lambda \geq 0 $,
\begin{equation}
    \label{eq:L_condition}
    \norm{f'(x_+(\lambda)) -  f'(x) - H(x_{\pi(k)})(x_+(\lambda)-x)}{*}\leq \norm{f'(x_+(\lambda)) -  f'(x)}{*} + \norm{H(x_{\pi(k)})(x_+(\lambda)-x)}{*}\leq L\norm{x-x_+(\lambda)}{}.
\end{equation}
Whence, for any $\lambda \geq 0 $,
\begin{align}
L^2\norm{x-x_+(\lambda)}{}^2 \overset{\eqref{eq:L_condition}}&{\geq} \norm{f'(x_+(\lambda)) -  f'(x_k) - H(x_{\pi(k)})(x_+(\lambda)-x_k)}{*}^2 \overset{\eqref{eq:F_div_def}}{=} \norm{F'(x_+(\lambda)) + \lambda \mathcal{R}(x_+(\lambda) - x_k)}{*}^2 \notag \\
&= \norm{F'(x_+(\lambda))}{*}^2 + 2\lambda\langle F'(x_+(\lambda)),x_+(\lambda) - x_k \rangle + \lambda^2\norm{x_k - x_+(\lambda)}{}^2. \label{eq:correctness_proof_1}
\end{align} 
Dividing by $2 \lambda$ and rearranging, we obtain, for any $\lambda \geq  L$,
\begin{align}
\langle F'(x_+(\lambda)),x_k - x_+(\lambda) \rangle
          &\geq \frac{1}{2 \lambda}\norm{F'(x_+(\lambda))}{*}^2 +  \frac{\lambda}{2} \norm{x_k - x_+(\lambda)}{}^2 - \frac{L^2\norm{x_k - x_+(\lambda)}{}^2}{2\lambda} \label{eq:correctness_proof_1_1}\\
         \overset{\lambda \geq L }&{\geq} \frac{1}{2 \lambda}\norm{F'(x_+(\lambda))}{*}^2. \notag
\end{align} 
Thus, since $\lambda=4^{j_k}\Lambda_kg_k^p$ in line 4 of \cref{alg:proximal_newton} is increasing when $j_k$ is increasing, we obtain that the first inequality in \eqref{eq:stop_cond_alg} holds after a finite number of trials when $\lambda\geq L$. 

We now move to the second inequality in \eqref{eq:stop_cond_alg}. 
Applying the same arguments as in the proof of \citep[Lemma 1.2.3]{nesterov2018lectures}, we obtain, for any $\lambda \geq 0 $,
\begin{equation}
\label{eq:lem:acceptance_inner_nonconv_1_proof_1}
f(x_+(\lambda))\leq f(x_k)+\langle f'(x_k), x_+(\lambda)-x_k \rangle + \frac 12 \langle H(x_{\pi(k)}) (x_+(\lambda)-x_k), x_+(\lambda)-x_k \rangle + \frac{L}{2}\norm{x_+(\lambda)-x_k}{}^2.
\end{equation}
Using convexity of $\psi$ and \eqref{eq:PLMSN_step_optimality}, we obtain, for any $\lambda \geq 0$
\begin{align}
\psi(x_+(\lambda))
&\leq \psi(x_k)+\langle -\psi'(x_+(\lambda)), x_k-x_+(\lambda) \rangle \notag\\
\overset{\eqref{eq:PLMSN_step_optimality}}&{=} \psi(x_k)+\langle f'(x_k) + H(x_{\pi(k)}) (x_+(\lambda)-x_k)+\lambda \mathcal{R} (x_+(\lambda)-x_k), x_k-x_+(\lambda) \rangle.
\label{eq:lem:acceptance_inner_nonconv_1_proof_2}
\end{align}
Summing the last inequality with \eqref{eq:lem:acceptance_inner_nonconv_1_proof_1} and using \eqref{eq:bounded_Hessian}, we further get, for any $\lambda \geq L$,
\begin{align*}
F(x_+(\lambda))&\leq F(x_k)-\frac 12 \langle H(x_{\pi(k)}) (x_+(\lambda)-x_k),x_+(\lambda)-x_k\rangle -\lambda \norm{x_+(\lambda)-x_k}{}^2+ \frac{L}{2}\norm{x_+(\lambda)-x_k}{}^2\\
\overset{\eqref{eq:bounded_Hessian}}&{\leq}F(x_k)+ \frac{L}{4}\norm{x_+(\lambda)-x_k}{}^2-\lambda \norm{x_+(\lambda)-x_k}{}^2+ \frac{L}{2}\norm{x_+(\lambda)-x_k}{}^2\\
\overset{L \leq \lambda}&{\leq} %
F(x_k)-\frac{\lambda}{4}\norm{x_+(\lambda)-x_k}{}^2.
\end{align*}
Thus, since $\lambda=4^{j_k}\Lambda_kg_k^p$ in line 4 of \cref{alg:proximal_newton} is increasing when $j_k$ is increasing, we obtain, rearranging the last inequality, that the second inequality in \eqref{eq:stop_cond_alg} holds after a finite number of trials when $\lambda\geq L$. 
Thus, both inequalities in \eqref{eq:stop_cond_alg} hold after a finite number of trials, and the step of \cref{alg:proximal_newton} is well defined. 

We now move to the proof of the bound \eqref{eq:lambda_bound} by induction. Consider the base case, i.e., iteration $k=0$. If $j_0=0$, i.e., the acceptance condition holds for $\lambda_0=\Lambda_0g_0^p$ and we have that $\lambda_0\leq\Lambda_0g_0^p\leq\overline{\lambda}$. If $j_0>0$, we in any case have that $\lambda_0 \leq 4L$ since otherwise $\lambda_0/4 >L$ would have already been accepted by the arguments in the first part of this lemma. Thus, $\lambda_0\leq 4L\leq \overline{\lambda}$. 

Assume now that the bound \eqref{eq:lambda_bound} holds for iteration $k-1$ and consider iteration $k$. If $j_k=0$, we have 
\begin{equation}
\label{eq:lambda_bound_proof_1}
\lambda_k=\Lambda_kg_k^p = \frac{4^{j_{k-1}}\Lambda_{k-1}}{4}g_k^p = \frac{\lambda_{k-1}}{4g_{k-1}^p}g_k^p    \overset{\eqref{eq:g_k_leq_g_k-1}}{\leq}   \frac{\lambda_{k-1}2^p}{4} \overset{p\leq 1}{\leq} \frac{\lambda_{k-1}}{2} \leq \overline{\lambda} ,    
\end{equation}
where we used that $2^p/4\leq 1/2$ for any $p\in [0,1]$ and the induction hypothesis $\lambda_{k-1} \leq\overline{\lambda} $.
If $j_k>0$, we have that $\lambda_k\slash 4 \leq L$ since it was not accepted. Hence, $\lambda_k \leq 4L \leq \overline{\lambda}$. Thus, we have proved the first bound in \eqref{eq:lambda_bound}. 
The second inequality in \eqref{eq:lambda_bound} just follows by definition, since for any $k \in \mathbb{N}$, $\Lambda_{k+1}=\lambda_k/(4g_k^p)\leq\overline{\lambda}/(4g_k^p)$. 
\end{proof}

Equipped with the upper bound on $\lambda_k$, we can now move on to the proof of the convergence rate theorem for general non-convex setting. 

\thmglobalconvergencesublinearnonconv*
\begin{proof}
We have by \cref{lem:acceptance_inner_nonconv} that both inequalities in \eqref{eq:stop_cond_alg} hold in each iteration $k\geq 0$, i.e., the acceptance conditions of the inner loop of \cref{alg:proximal_newton} hold and the inner loop ends after a finite number of trials. Thus, the sequence $(x_k)_{k \in \mathbb{N}}$ is well defined. 
From \eqref{eq:F_k_decr}, it is clear by induction that for all $k\geq 0$, $F(x_k)\leq F(x_0)$ and thus $(x_k)_{k \in \mathbb{N}} \subset \F_0$. 

Summing \eqref{eq:functionValuesGradientRelation} from $0$ to $k-1$ and telescoping, using that $F$ is bounded from below by $F^*$ by \cref{ass:basic}, and
\cref{lem:acceptance_inner_nonconv}, we obtain
\[
F_0=F({x}_0) - F^*  \geq F({x}_0) - F(x_{k})  \overset{\eqref{eq:functionValuesGradientRelation}}{\geq}  \sum_{i=0}^{k-1}\frac{1}{16\lambda_i}g_{i+1}^2 \overset{\eqref{eq:lambda_bound}}{\geq} 
\sum_{i=0}^{k-1} \frac{1}{16\overline{\lambda}}g_{i+1}^2 \geq\frac{k}{16\overline{\lambda}} \min_{0\leq i \leq k-1}\norm{F'(x_{i+1})}{*}^2.
\]
First, observe that this chain of inequalities implies that for all $k\geq 1$, $\sum_{i=0}^{k-1}g_{i+1}^2$ is bounded from above. Thus, $\norm{F'(x_{i})}{*}=g_{i} \to 0$ as $i\to \infty$.
Second, recalling from \eqref{eq:lambda_bound} that $\overline{\lambda}=\max\{4L,\Lambda_0g_0^p\}$, we immediately get \eqref{eq:nonconv_rate}. Now, suppose that $\lambda_i \to 0$ as $i \to \infty$. We also obtain from the above inequalities, by keeping $\lambda_i$, that
\[
F(x_0)  - F^* \geq \frac{1}{16}\left(\min_{0 \leq i \leq k-1}g_{i+1}^2\right) \sum_{i=0}^{k-1}\frac{1}{\lambda_i}
\]
and hence
\[
\min_{0 \leq i \leq k-1}g_{i+1}^2 \leq 16(F(x_0)  - F^*)\frac{1}{\sum_{i=0}^{k-1}\frac{1}{\lambda_i} }.
\]
The inequality between arithmetic and geometric means (AM-GM) yields $\sum_{i=0}^{k-1}\frac{1}{\lambda_i} \geq k\left(\prod_{i=0}^{k-1}\frac{1}{\lambda_i}\right)^{1\slash k}$, whence
\[
\min_{0 \leq i \leq k-1}g_{i+1}^2 \leq 16(F(x_0)  - F^*)\frac{1}{k\left(\prod_{i=0}^{k-1}\frac{1}{\lambda_i}\right)^{1\slash k}} = 16(F(x_0)  - F^*)\frac{\left(\prod_{i=0}^{k-1}\lambda_i\right)^{1\slash k}}{k},
\]
and the right-hand side is $\so(1\slash k)$ because the geometric mean converges to zero. By taking the square root, we deduce the $\so(1\slash \sqrt{k})$ rate for $\min_{0\leq i \leq k-1}\norm{F'(x_{i+1})}{*}$.

It remains to estimate the number of Newton steps \eqref{eq:prox_step_alg} and Hessian evaluations. By line 9 of \cref{alg:proximal_newton}, we have for all $k\geq 0$ that $\Lambda_{k+1} = 4^{j_k}\Lambda_k/4$, or equivalently $j_k=1+\log_4\frac{\Lambda_{k+1}}{\Lambda_k}$. Summing these equalities from $0$ to $k$, we obtain that the total number of Newton steps up to the end of iteration $k$ is
\[
N_k=\sum_{i=0}^k j_i = \sum_{i=0}^k \left(1+\log_4\frac{\Lambda_{i+1}}{\Lambda_i}\right)=k+1+\log_4\frac{\Lambda_{k+1}} {\Lambda_0}\overset{\eqref{eq:lambda_bound}}{\leq} k+1+\log_4\frac{\overline{\lambda}} {4g_k^p\Lambda_0}.%
\]
Clearly, since we evaluate the Hessian once in $m$ iterations, the total number of Hessian evaluations up to iteration $k$ does not exceed $\lceil \frac{k}{m} \rceil$.
\end{proof}

Next, we consider the (possibly non-convex) setting under the PL condition.

\thmglobalconvergencenonconvPL*
\begin{proof}
\begin{enumerate}[label=(\roman*), wide, labelwidth=!, labelindent=0pt]\itemsep=0em
\item  
From \cref{thm:global_convergence_sublinear_nonconv}, we know that the sequence $(x_k)_{k \in \mathbb{N}} \subset \F_0$ is well defined. Applying the PL condition \eqref{eq:PL}, we get
\begin{equation*}
     F_k - F_{k+1}  \overset{\eqref{eq:functionValuesGradientRelation}}{\geq}  \frac{1}{16\lambda_k}g_{k+1}^2  \geq \frac{1}{16\lambda_k}  {\rm dist}(0, \partial F(x_{k+1}))^2\overset{\eqref{eq:PL}}{\geq}  \frac{1}{16\lambda_k} \cdot 2\mu F_{k+1}. 
\end{equation*}
Rearranging, we obtain
\begin{equation}
\label{eq:thm:global_convergence_nonconv_PL_proof_-1.5}
    F_{k+1}  \leq \frac{1}{1+\frac{\mu}{8\lambda_k}}F_k=\frac{8\lambda_k}{\mu+8\lambda_k}F_k=\left(1-\frac{\mu}{\mu+8\lambda_k}\right)F_k.
\end{equation}
We immediately see that if $\lambda_k \to 0$ as $k\to \infty$, then $1-\frac{\mu}{\mu+8\lambda_k}\to 0$ and we obtain superlinear convergence. Using the upper bound $\lambda_k\leq \overline{\lambda}$  in \eqref{eq:lambda_bound}, we obtain 
\[
    \left(1-\frac{\mu}{\mu+8\lambda_k}\right) \leq \left(1-\frac{\mu}{\mu+8\overline{\lambda}}\right) \leq \exp\left(-\frac{\mu}{\mu+8\overline{\lambda}}\right).
\]
This and \eqref{eq:thm:global_convergence_nonconv_PL_proof_-1.5}, by induction, give \eqref{eq:nonconv_PL_rate}.

\item 
Let $\sigma > 1$. Then $t \mapsto t^{1/\sigma}$ is concave and we have that
\begin{align}
    F_{k}^{1/\sigma} -F_{k_{}+1}^{1/\sigma} \geq \frac{1}{\sigma F_{k_{}}^{1-1/\sigma}}(F_{k_{}} - F_{k_{}+1}) \overset{\eqref{eq:stop_cond_alg_new_2}}{\geq} \frac{ \lambda_{k_{}} r_{k_{}}^2}{4\sigma F_{k_{}}^{1-1/\sigma}}. 
\label{eq:thm:global_convergence_nonconv_PL_proof_1_tau_ver}
\end{align}
By the PL condition \eqref{eq:PL} and since $1-1/\sigma>0$, we infer $F_k^{1 - 1/{\sigma}} \leq  (g_k^2/(2\mu))^{1 - 1/\sigma}=(2\mu)^{-(1-1/\sigma)} g_k^{1 + \frac{\sigma-2}{\sigma}}$ and hence
\begin{align*}
    F_{k_{}}^{1/\sigma} -F_{k_{}+1}^{1/\sigma} \overset{\eqref{eq:thm:global_convergence_nonconv_PL_proof_1_tau_ver},\eqref{eq:PL}}{\geq} \frac{  (2\mu)^{(1-1/\sigma)} \lambda_{k_{}} r_{k_{}}^2}{4 \sigma g_k^{1 + \frac{\sigma-2}{\sigma}}}  \overset{\eqref{eq:g_k_leq_r_k}}{\geq}  \frac{  (2\mu)^{(1-1/\sigma)} \lambda_{k_{}} r_{k_{}}^2}{8 \sigma g_k^{\frac{\sigma-2}{\sigma}} \lambda_{k-1}r_{k-1}}.
\end{align*}
By (i) of \cref{ass:basic}, $(x_k)_{k \in \mathbb{N}} \subset \F_0$, and boundedness of $\mathcal{F}_0$, there exists a constant $C >0$ such that $g_k \leq C$ for all $k\geq 0$.
Combining this with line 9 of \cref{alg:proximal_newton}, we obtain $\lambda_k\geq \Lambda_k g_k^p=\lambda_{k-1}g_k^p/(4g_{k-1}^p) \geq \lambda_{k-1}g_k^p/(4C^p)$ and hence, choosing $\sigma = 2\slash(1-p)$ (so that $p=(\sigma-2)\slash \sigma$), we obtain
\begin{align}
    F_{k_{}}^{1/\sigma} -F_{k_{}+1}^{1/\sigma} 
    {\geq}
\frac{  (2\mu)^{(1-1/\sigma)} g_k^p r_{k_{}}^2}{32 \sigma C^p g_k^{\frac{\sigma-2}{\sigma}} r_{k-1}}=\frac{  (2\mu)^{(1-1/\sigma)} r_{k_{}}^2}{32 \sigma  C^p  r_{k-1}}.\label{eq:tau_ew_new}
\end{align}
Rearranging and using that $r_k$ is bounded by $D_0$ according to \eqref{eq:bounded_sublevel_set} and $(x_k)_{k \in \mathbb{N}} \subset \F_0$,
we obtain
\begin{align}
r_{k_{}}^2 \leq \frac{32 \sigma  C^p  D_0}{(2\mu)^{(1-1/\sigma)}}    (F_{k_{}}^{1/\sigma} -F_{k_{}+1}^{1/\sigma} ) \overset{\eqref{eq:nonconv_PL_rate}}{\to} 0 \quad \text{as $k \to \infty$}.
    \label{eq:tau_ew_new_1}
\end{align}
Denote for simplicity $C_1^{-1} := \frac{  (2\mu)^{(1-1/\sigma)}}{32 \sigma  C^p D_0}$. Rearranging \eqref{eq:tau_ew_new} and using the inequality $2\sqrt{ab} \leq a+b$ that holds for $a,b\geq0$, we get
\begin{align*}
    r_{k} &\leq  \sqrt{C_1r_{k-1} (F_{k_{}}^{1/\sigma} -F_{k_{}+1}^{1/\sigma})}
    \leq 
    \frac{r_{k-1}}{2} + \frac{C_1(F_{k_{}}^{1/\sigma} -F_{k_{}+1}^{1/\sigma})}{2}.
\end{align*}
Summing from $l+1 \geq 1$ to $n \geq l+1$ yields 
\begin{align*}
        \sum_{k = l+1}^n r_{k} \leq \frac{1}{2}\sum_{k=l}^{n-1}r_k +  \frac{C_1  (F_{l_{}+1}^{1/\sigma} -F_{n_{}+1}^{1/\sigma})}{2}.
\end{align*}
Putting the sums on one side and using that $r_k\geq 0$ and triangle inequality, we obtain
\begin{align}
        \frac{1}{2}\norm{x_{l+1}-x_{n}}{}\leq \frac{1}{2}\sum_{k = l+1}^{n-1} r_{k} \leq r_l +  \frac{C_1(F_{l_{}+1}^{1/\sigma} -F_{n_{}+1}^{1/\sigma})}{2} \to 0 \quad \text{as $l,n \to \infty$,} \label{eq:main_thm_PL_5}
\end{align}
where we used that $r_k\to 0$ by \eqref{eq:tau_ew_new_1} and $F_k \to 0$ by \eqref{eq:nonconv_PL_rate}. Thus, we obtain that $(x_k)_{k \in \mathbb{N}} $ is a Cauchy sequence and hence strongly converges to a point $x^* \in \HH$.

 From \cref{thm:global_convergence_sublinear_nonconv}, we know that $f'(x_{k})+\psi'(x_{k}) = F'(x_k) \to 0$ as $k\to \infty$. Since $f$ is continuously Fréchet differentiable, we have that $f'(x_{k}) \to f'(x^*)$. Consequently $\psi'(x_k) = (\psi'(x_{k}) + f'(x_{k})) - f'(x_{k}) \to - f'(x^*)$. Since $\psi$ is convex and lower semicontinuous, $x\mapsto \partial \psi(x)$ has a closed graph and we deduce that $-f'(x^*) =  \lim_{k\to \infty} \psi'(x_{k}) \in \partial \psi(x^*)$. Thus, equivalently, we have that $0 \in \partial F(x^*) =  f'(x^*) + \partial \psi(x^*)$ and $x^*$ is a stationary point.
Further, the functional $F$ is a sum of continuously differentiable $f$ and convex proper lower semicontinuous $\psi$. Thus, $F$ is lower semicontinuous, which implies that $x^*$ is a global minimizer:
\begin{equation*}
    F^* \overset{\eqref{eq:nonconv_PL_rate}}{=} \lim_{k \to \infty}F(x_{k}) = \liminf_{k \to \infty} F(x_{k}) \geq F(x^*).
\end{equation*}

Next, we show the linear convergence of $(x_k)_{k \in \mathbb{N}} $.  Fixing some $l \geq 0$ and arbitrary $n \geq l+2$, we obtain using the triangle inequality that 
    \begin{align}
        \norm{x_{l+1} - x_n}{}
        \leq \sum_{k = l+1}^{n-1} r_{k} 
        \overset{\eqref{eq:main_thm_PL_5}}&{\leq} 2r_l +  C_1(F_{l_{}+1}^{1/\sigma} -F_{n_{}+1}^{1/\sigma}) \label{eq:need_for_useful_bounds} %
         \\
        \overset{\eqref{eq:tau_ew_new_1}}&{\leq}  2\sqrt{C_1(F_{l-1}^{1/\sigma} - F_{l}^{1/\sigma})} 
        +   C_1(F_{l_{}+1}^{1/\sigma} -F_{n_{}+1}^{1/\sigma}).\label{eq:main_thm_PL_6}
    \end{align}
    Sending $n\to \infty$ in \eqref{eq:main_thm_PL_6}, we obtain $\norm{x_{l+1} - x^*}{}   \leq 2\sqrt{C_1F_{l-1}^{1/\sigma}} +   C_1F_{l_{}-1}^{1/\sigma}$, where we used that $0\leq F_{l+1} \leq F_l \leq F_{l-1}$ by \eqref{eq:F_k_decr}. Thus, the linear convergence of $x_l$  follows from the linear convergence of  $F_{l}$ \eqref{eq:nonconv_PL_rate}. In the same way, superlinear convergence when $\lambda_l \to 0$ as $l \to \infty$ follows since then $F_{l}$ converges to 0 superlinearly, see \eqref{eq:thm:global_convergence_nonconv_PL_proof_-1.5}.

It remains to show the convergence of $\norm{F'(x_k)}{*}$ to $0$. We have
\begin{equation}
    \norm{F'(x_k)}{*}\overset{\eqref{eq:g_k_leq_r_k}}{\leq} 2\lambda_{k-1}\norm{x_{k-1}-x_{k}}{} \leq 2\lambda_{k-1}(\norm{x_{k-1}-x^*}{}+\norm{x^*-x_{k}}{}). %
\end{equation}
Hence, linear convergence follows from the bound $\lambda_{k-1}\leq\overline{\lambda}$ in \eqref{eq:lambda_bound} and linear convergence of $x_k$ to $x^*$; superlinear convergence follows from $\lambda_k\to0$ and superlinear convergence of $x_k$ to $x^*$ in that case. 
\qedhere
\end{enumerate}    
\end{proof}

We now proceed to the proof of the convergence rate in the convex setting. By \cref{thm:global_convergence_sublinear_nonconv}, we have that 
$(x_k)_{k \in \mathbb{N}} \subset \mathcal{F}_0$ is well defined. 
This, together with convexity of $F$ and \eqref{eq:bounded_sublevel_set} (which we assume in this setting)  imply that, for any $\bar x \in \mathcal{F}_0$ and $k\geq 0$, we have
    \begin{equation}
    \label{eq:convexity_id} 
        F(x_k) - F(\bar x) \leq \langle F'(x_{k}), x_{k}-\bar x \rangle \leq %
        g_{k}D_0.   
    \end{equation}

\thmglobalconvergencesublinear*
\begin{proof}

We start with the following chain of inequalities for any iteration $k\geq0$:
\begin{align*}
    \frac{1}{F_{k+1}}-\frac{1}{F_k} &= \frac{F_k - F_{k+1}}{F_kF_{k+1}}
    \overset{\eqref{eq:functionValuesGradientRelation}}{\geq} \frac{g_{k+1}^2}{16\lambda_kF_kF_{k+1}} 
    = \frac{g_k^2}{16\lambda_k F_kF_{k+1}}\left(\frac{g_{k+1}}{g_k}\right)^2 \\
        \overset{\eqref{eq:convexity_id}}&{\geq} \frac{F_k^2}{16D_0^2\lambda_k F_kF_{k+1}}\left(\frac{g_{k+1}}{g_k}\right)^2 \geq \frac{1}{16D_0^2\lambda_k}\left(\frac{g_{k+1}}{g_k}\right)^2,  
\end{align*}
where in the last step we used that, by \eqref{eq:F_k_decr}, $F_k \geq F_{k+1}$.
Summing these inequalities for $j$ from $0$ to $k-1$ and using the inequality between arithmetic and geometric means (AM-GM), we obtain 
\begin{align*}
    \frac{1}{F_{k}}-\frac{1}{F_0}  
        &\geq \frac{1}{16D_0^2}\sum_{j=0}^{k-1}\frac{1}{\lambda_j}\left(\frac{g_{j+1}}{g_j}\right)^2 
        \geq \frac{1}{16D_0^2}k\left(\prod_{j=0}^{k-1} \frac{1}{\lambda_j}\left(\frac{g_{j+1}}{g_j}\right)^2\right)^{1/k}
        = \frac{k}{16D_0^2} \left(\frac{g_{k}}{g_0}\right)^{2/k}\left(\prod_{j=0}^{k-1} \frac{1}{\lambda_j}\right)^{1/k}\\
         \overset{\eqref{eq:convexity_id}}&{\geq} \frac{ k}{16D_0^2}\left(\frac{F_{k}}{D_0g_0}\right)^{2/k}\left(\prod_{j=0}^{k-1}\lambda_j\right)^{-1/k} 
         = \frac{  k}{16D_0^2}\left(\frac{D_0g_0}{F_{k}}\right)^{-2/k}\left(\prod_{j=0}^{k-1}\lambda_j\right)^{-1/k}\\
         &= \frac{  k}{16D_0^2}\exp\left(-\frac{2}{k}\ln\left(\frac{D_0g_0}{F_{k}}\right)\right)\left(\prod_{j=0}^{k-1}\lambda_j\right)^{-1/k} 
         \geq \frac{  k}{16D_0^2}\left(1-\frac{2}{k}\ln\left(\frac{D_0g_0}{F_{k}}\right)\right)\left(\prod_{j=0}^{k-1}\lambda_j\right)^{-1/k},
\end{align*}
where in the last step we used that $\exp(x) \geq 1+x$.  
Consider two cases. If $\frac{2}{k}\ln\left(\frac{D_0g_0}{F_{k}}\right) \geq \frac 12$, we obtain $F_k \leq g_0D_0\exp\left(-\frac{k}{4}\right)$. Otherwise, if $\frac{2}{k}\ln\left(\frac{D_0g_0}{F_{k}}\right) < \frac 12$, we obtain
\begin{align}
\frac{1}{F_k} \geq  \frac{  k}{32D_0^2}\left(\prod_{j=0}^{k-1}\lambda_j\right)^{-1/k} \quad\Longleftrightarrow \quad F_k \leq  \frac{32D_0^2}{  k}\left(\prod_{j=0}^{k-1}\lambda_j\right)^{1/k}.
\label{eq:conv_rate_product}
\end{align}
Combining these two cases, we obtain
\[
 F(x_k) - F^* = F_k \leq  g_0D_0\exp\left(-\frac{k}{4}\right) + \frac{32D_0^2}{  k}\left(\prod_{j=0}^{k-1}\lambda_j\right)^{1/k}.
\]
Applying the bound $\lambda_k \leq \overline{\lambda} $ in \eqref{eq:lambda_bound}, we obtain the convergence rate result \eqref{eq:conv_rate}. Further, if $\lambda_k \to 0$ as $k\to \infty$, we obtain that $\left(\prod_{j=0}^{k-1}\lambda_j\right)^{1/k} \to 0$ since $\lambda_k\geq 0$ for $k\geq0$, and hence $F(x_k)-F^*=\so(1\slash k)$. \\

Finally, let us now consider the case when $f$ is locally $C^2$, $F$ is strictly convex, $H:=\nabla^2 f(x)$, and $\dim(\HH) <  \infty$ and show that $\lambda_k\to 0$ (as claimed in the parentheses in the theorem). As already mentioned $(x_k)_{k \in \mathbb{N}} \subset \mathcal{F}_0$ and $\mathcal{F}_0$ is bounded. Thus, since $\dim(\HH) <  \infty$, we have that there exists a convergent subsequence
$(x_{k_j})_{j\in \mathbb{N}}$ such that $x_{k_j}\to x^*$. By \cref{thm:global_convergence_sublinear_nonconv} we have that $\norm{F'(x_k)}{*} \to 0$. Hence, by the convexity of $F$ and closedness of the convex subdifferential, it follows that $x^*$ is a stationary point of $F$. Moreover, strict convexity implies that $x^*$ is the unique minimizer of $F$, and a standard contradiction argument shows that the whole
sequence $(x_k)_{k \in \mathbb{N}}$ converges to $x^*$. Below, as a part of the proof of \cref{thm:global_convergence_c2}, we show that if $x_k\to x^*$, $f$ is locally $C^2$ around $x^*$, $H:=\nabla^2 f(x)$, and $\dim(\HH) <  \infty$, then $\lambda_k\to0$.
\end{proof}

\section{Proofs for \cref{sec:fast} on superlinear convergence}

In this section we shall frequently use the local strong convexity introduced in \cref{ass:strong_convexity}. Let us recall that, as stated at the start of \cref{sec:fast}, we already have $x_k \to x^*$ and for $k \geq K$, $x_k$ belongs to the ball of strong convexity $B_R(x^*)$ and $H(x_k) \succeq 0$. Note that $H(x_k) \succeq 0$ for sufficiently large $k$ implies that 
$H(x_{\pi(k)}) \succeq 0$ for sufficiently large $k$. W.l.o.g., we further assume that $H(x_{\pi(k)}) \succeq 0$ for $k \geq K$ for the same $K$. Note that, for the sake of potential generalizations, in some of the next lemmas we list only necessary subassumptions of assumptions made in \cref{sec:fast} and in particular in \cref{ass:strong_convexity}. 

\subsection{Technical lemmas}

Our first steps are to establish several relations between points $x_k$ (the point from which we make a step at iteration $k$), $x_{k+1}=x_+(\lambda_k, x_k, x_{\pi(k)})$ (the point accepted at iteration $k$), $x_{k,+}=x_+(\lambda_k/4,x_k)=x_+(\lambda_k/4, x_k, x_{\pi(k)})$ (the last non-accepted point in the case of nontrivial linesearch at iteration $k$, i.e., if $j_k>0$), and $x^*$ (the point to which $x_k$ converges by the assumption made at the beginning of \cref{sec:fast}).

    \begin{lem}\label{lem:useful_bounds_new_strong}%
       Let $(x_k)_{k \in \mathbb{N}}$ be the iterates of \cref{alg:proximal_newton}. Let \cref{ass:strong_convexity} hold and for all $k\geq K$, $x_k \in B_R(x^*)$. 
       Then, there exist constants $c_1,c_2>0$ such that, for all $k\geq K$,
    \begin{equation}
        \norm{x_{k+1}-x^*}{} \leq c_1 \| x_{k+1}- x_k\| \quad \text{and} \quad  \norm{x_k-x^*}{} \leq c_2 \|x_{k+1} - x_k\|. \label{eq:useful_bounds_new}
    \end{equation}
     We also have, for $0<\lambda < \lambda_k$ and $k\geq K$, %
         \begin{align}
             \| x_{+}(\lambda,x_k,x_{\pi(k)}) - x^*\|  &\leq
             \frac{(2c_1+1)\lambda_k -(1+c_1)\lambda}{\lambda_k}\| x_{+}(\lambda,x_k,x_{\pi(k)})  - x_k\|,\label{eq:good_bound_1}\\
             \norm{x_k-x^*}{} &\leq  \frac{(2c_1+2)\lambda_k -(1+c_1)\lambda}{\lambda_k}\| x_{+}(\lambda,x_k,x_{\pi(k)})  - x_k\|.\label{eq:good_bound_2}
         \end{align}
    \end{lem}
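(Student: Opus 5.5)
The first inequality in \eqref{eq:useful_bounds_new} will come from local strong convexity combined with the step-size relation \eqref{eq:g_k_leq_r_k}. Since $x_{k+1},x^*\in B_R(x^*)$ and $0\in\partial F(x^*)$, strong convexity at both points and adding the two inequalities give strong monotonicity:
\[
\langle F'(x_{k+1}),x_{k+1}-x^*\rangle\geq \mu\norm{x_{k+1}-x^*}{}^2 .
\]
Hence $\mu\norm{x_{k+1}-x^*}{}\leq g_{k+1}\leq 2\lambda_k r_k\leq 2\overline{\lambda}\,\norm{x_{k+1}-x_k}{}$, using \eqref{eq:lambda_bound}. So we may take $c_1=2\overline{\lambda}/\mu$. (We may need $x_{k+1}\in B_R(x^*)$; this holds for $k\geq K$ by the standing convention.) The second inequality follows from the triangle inequality: $\norm{x_k-x^*}{}\leq \norm{x_k-x_{k+1}}{}+\norm{x_{k+1}-x^*}{}\leq (1+c_1)\norm{x_{k+1}-x_k}{}$, so $c_2=1+c_1$.

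For \eqref{eq:good_bound_1}–\eqref{eq:good_bound_2} we write $y\coloneqq x_+(\lambda,x_k,x_{\pi(k)})$ with $\lambda<\lambda_k$. We then compare $y$ with $x_{k+1}=x_+(\lambda_k,x_k,x_{\pi(k)})$ using \cref{lem:lambda_dependence}, which applies since $H(x_{\pi(k)})\succeq0$ for $k\geq K$:
\[
\norm{y-x_{k+1}}{}\leq \frac{\lambda_k-\lambda}{\lambda_k}\norm{x_k-y}{}.
\]
By the triangle inequality this gives
\[
\norm{x_{k+1}-x_k}{}\leq \norm{x_{k+1}-y}{}+\norm{y-x_k}{}\leq \frac{2\lambda_k-\lambda}{\lambda_k}\norm{y-x_k}{}.
\]
Then
\[
\norm{y-x^*}{}\leq \norm{y-x_{k+1}}{}+\norm{x_{k+1}-x^*}{}\leq \frac{\lambda_k-\lambda}{\lambda_k}\norm{y-x_k}{}+c_1\frac{2\lambda_k-\lambda}{\lambda_k}\norm{y-x_k}{},
\]
and the coefficient sums to exactly $\frac{(2c_1+1)\lambda_k-(1+c_1)\lambda}{\lambda_k}$, which is \eqref{eq:good_bound_1}. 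Finally, $\norm{x_k-x^*}{}\leq\norm{x_k-y}{}+\norm{y-x^*}{}$ adds $1=\lambda_k/\lambda_k$ to this coefficient and yields \eqref{eq:good_bound_2}.

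The main obstacle is the first step: it needs both $x_{k+1}$ and $x^*$ in the strong-convexity ball, and a uniform bound on $\lambda_k$. The latter is supplied by \cref{lem:acceptance_inner_nonconv}. Everything after that is triangle inequalities plus the $\lambda$-sensitivity lemma, with coefficients matching the statement exactly.
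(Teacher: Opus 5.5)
Your proof is correct and follows the paper's argument. The paper also reduces the first bound to $\mu\norm{x_{k+1}-x^*}{}\leq g_{k+1}\leq 2\lambda_k r_k\leq 2\overline{\lambda}\,r_k$, giving the same constants $c_1=2\overline{\lambda}/\mu$ and $c_2=1+c_1$; it gets there via quadratic growth plus the PL-type bound from strong convexity, where you use strong monotonicity and Cauchy--Schwarz, which is an equivalent shortcut. It then proves \eqref{eq:good_bound_1}--\eqref{eq:good_bound_2} by the same triangle inequalities combined with \cref{lem:lambda_dependence}, with the coefficients rearranged only trivially.
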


Note that we will mainly use the last two inequalities with $\lambda=\lambda_k/4$, i.e., when $x_{+}(\lambda,x_k,x_{\pi(k)})=x_{+}(\lambda_k/4,x_k,x_{\pi(k)})=x_{k,+}$.
    \begin{proof}
Trivial consequences of strong convexity (which we can use since for $k\geq K$, $x_k$ belongs to the ball of strong convexity) imply 
\begin{equation}
    \mu \norm{x_{k+1}-x^*}{}^2 \leq 2(F(x_{k+1})-F(x^*)) = 2F_{k+1} \leq g_{k+1}^2\slash \mu \overset{\eqref{eq:g_k_leq_r_k}}{\leq} 4\lambda_k^2r_k^2/\mu, \label{eq:GQ+isolated}
    \end{equation}
whence, by \eqref{eq:lambda_bound}, for $k\geq K$,
\begin{equation}
\label{eq:discussion_1}
    \norm{x_{k+1}-x^*}{} %
            \leq \frac{2 \lambda_k}{\mu} r_k = \frac{2 \lambda_k}{\mu}\norm{x_k-x_{k+1}} \leq %
            \frac{2\overline{\lambda}}{\mu} \norm{x_k-x_{k+1}}{},
\end{equation}
which is the first inequality in \eqref{eq:useful_bounds_new} with $c_1=\frac{2\overline{\lambda}}{\mu} $. The second inequality in \eqref{eq:useful_bounds_new} follows with $c_2=1+\frac{2\overline{\lambda}}{\mu}$ via the triangle inequality.

To show \eqref{eq:good_bound_1}, applying two times the triangle inequality, one time \eqref{eq:useful_bounds_new} and one time \cref{lem:lambda_dependence} (which we can apply since $H(x_{\pi(k)}) \succeq 0$ for $k \geq K$ by \cref{ass:strong_convexity}), recalling from \cref{alg:proximal_newton} and \eqref{eq:PLMSN_step} that $x_{k+1}=x_+(\lambda_k, x_k, x_{\pi(k)})$, we obtain
\begin{align*}
    \| x_{+}(\lambda,x_k,x_{\pi(k)}) - x^*\| &\leq \| x_{+}(\lambda,x_k,x_{\pi(k)}) - x_{k+1}\| + \|x_{k+1} -x^*\|\\
    \overset{\eqref{eq:useful_bounds_new}}&{\leq} \| x_{+}(\lambda,x_k,x_{\pi(k)}) - x_{k+1}\| + c_1\|x_{k+1} -x_k\|\\
    &\leq \| x_{+}(\lambda,x_k,x_{\pi(k)}) - x_{k+1}\| + c_1(\| x_{+}(\lambda,x_k,x_{\pi(k)}) - x_{k+1}\|+\|x_{+}(\lambda,x_k,x_{\pi(k)}) -x_k\|)\\
    &=(1+c_1)\| x_{+}(\lambda,x_k,x_{\pi(k)}) - x_{k+1}\|+c_1\|x_{+}(\lambda,x_k,x_{\pi(k)}) -x_k\|)\\
   \overset{\eqref{eq:lambda_dependence}}&{\leq}  (1+c_1)\frac{\lambda_k-\lambda}{\lambda_k}\|x_{+}(\lambda,x_k,x_{\pi(k)})  - x_k\| + c_1\|x_{+}(\lambda,x_k,x_{\pi(k)}) -x_k\|, 
\end{align*}
which is \eqref{eq:good_bound_1}.
Applying once again the triangle inequality, we obtain \eqref{eq:good_bound_2}:
\begin{align*}
    \norm{x_k-x^*}{} &\leq \norm{x_k-x_+(\lambda, x_k,x_{\pi(k)})}{} + \norm{x_+(\lambda, x_k,x_{\pi(k)})-x^*}{}\\
    \overset{\eqref{eq:good_bound_1}}&{\leq}   \norm{x_k-x_+(\lambda, x_k,x_{\pi(k)})}{}  + \frac{(2c_1+1)\lambda_k -(1+c_1)\lambda}{\lambda_k}\| x_{+}(\lambda,x_k,x_{\pi(k)})  - x_k\|.
\end{align*}
\end{proof} 

\begin{lem}\label{lem:iterates_g_k_and_lambda_k}   
    Let $(x_k)_{k \in \mathbb{N}}$ be the iterates of \cref{alg:proximal_newton}. Let \cref{ass:strong_convexity} hold and for all $k\geq K$ $x_k \in B_R(x^*)$. If $\lambda_k \to 0$, then for $k$  sufficiently large,
    \begin{align}
        \norm{x_{k+1}-x^*}{} &\leq \frac{4\lambda_k}{\mu} \norm{x_k-x^*}{},\label{eq:lem_iterates_1}\\
        g_{k+1} &\leq \frac{4\lambda_k}{\mu}g_k. \label{eq:lem_iterates_2}
    \end{align}
\end{lem}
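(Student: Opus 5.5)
The first bound \eqref{eq:lem_iterates_1} comes straight from the proof of \cref{lem:useful_bounds_new_strong}. There, local strong convexity together with \eqref{eq:g_k_leq_r_k} gave \eqref{eq:discussion_1}, namely
\[
\norm{x_{k+1}-x^*}{}\leq \frac{2\lambda_k}{\mu}\norm{x_k-x_{k+1}}{}
\]
for $k\geq K$.
\begin{itemize}
\item The triangle inequality gives $\norm{x_k-x_{k+1}}{}\leq \norm{x_k-x^*}{}+\norm{x_{k+1}-x^*}{}$.
\item Substituting, we obtain $\bigl(1-\frac{2\lambda_k}{\mu}\bigr)\norm{x_{k+1}-x^*}{}\leq \frac{2\lambda_k}{\mu}\norm{x_k-x^*}{}$.
\item Since $\lambda_k\to0$, for $k$ large we have $2\lambda_k/\mu\leq 1/2$, so the factor on the left is at least $1/2$.
\end{itemize}
Dividing through yields $\norm{x_{k+1}-x^*}{}\leq \frac{4\lambda_k}{\mu}\norm{x_k-x^*}{}$, which is \eqref{eq:lem_iterates_1}.

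For \eqref{eq:lem_iterates_2} I start from \eqref{eq:g_k_leq_r_k}, $g_{k+1}\leq 2\lambda_k r_k$, and bound $r_k$ by $g_k$. Since $H(x_{\pi(k)})\succeq 0$ for $k\geq K$, the step-length bound \eqref{eq:rk_leq_gk_over_lambda_k_new} with curvature $0$ gives only $r_k\leq g_k/\lambda_k$. That yields $g_{k+1}\leq 2g_k$ and is useless here, so the $\lambda_k$ in the denominator must be replaced by $\mu$.

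The main obstacle is getting a factor $\mu$ in place of $\lambda_k$, and I use strong monotonicity of $\partial F$ for this. Local strong convexity on $B_R(x^*)$ gives
\[
\langle F'(x_{k+1})-F'(x_k),x_{k+1}-x_k\rangle\geq \mu r_k^2 .
\]
Both iterates lie in the ball for $k\geq K$, and adding the strong convexity inequality at the two points in both directions is legitimate for the particular subgradients $F'(x_k)$, $F'(x_{k+1})$ the algorithm uses. The acceptance condition \eqref{eq:stop_cond_alg_new_1} says $\langle F'(x_{k+1}),x_k-x_{k+1}\rangle\geq 0$, i.e.\ $\langle F'(x_{k+1}),x_{k+1}-x_k\rangle\leq 0$. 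Therefore
\[
\mu r_k^2\leq \langle F'(x_k),x_k-x_{k+1}\rangle\leq g_k r_k,
\]
so $r_k\leq g_k/\mu$. Combining with \eqref{eq:g_k_leq_r_k} gives $g_{k+1}\leq \frac{2\lambda_k}{\mu}g_k\leq\frac{4\lambda_k}{\mu}g_k$.

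If the monotonicity step were problematic, my fallback would be to use $g_k\geq \mu\norm{x_k-x^*}{}$, which follows from strong convexity with $x^*$ a minimizer. Then $r_k\leq \norm{x_k-x^*}{}+\norm{x_{k+1}-x^*}{}\leq (1+4\lambda_k/\mu)\norm{x_k-x^*}{}$ by \eqref{eq:lem_iterates_1}. This gives $g_{k+1}\leq 2\lambda_k(1+4\lambda_k/\mu)g_k/\mu$, and for large $k$ the factor $2(1+4\lambda_k/\mu)$ is at most $4$, so this also yields $\frac{4\lambda_k}{\mu}g_k$.
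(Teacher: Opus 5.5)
Your proof is correct. The first inequality is obtained exactly as in the paper. For the second you take a different route.

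\textbf{The paper's route.} It uses the error bound $\|x_j-x^*\|\le g_j/\mu$ from \eqref{eq:to_refer_to} at both $j=k$ and $j=k+1$ and splits $r_k$ by the triangle inequality. It then absorbs the resulting term $\frac{2\lambda_k}{\mu}g_{k+1}$ into the left-hand side once $2\lambda_k/\mu\le 1/2$.

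\textbf{Your route.} You combine strong monotonicity of the algorithm's subgradients on $B_R(x^*)$ with the sign information $\langle F'(x_{k+1}),x_k-x_{k+1}\rangle\ge 0$ from \eqref{eq:stop_cond_alg_new_1}. This gives the step bound $r_k\le g_k/\mu$ directly. It is the analogue of \eqref{eq:rk_leq_gk_over_lambda_k_new} with the curvature of $F$ in place of that of $H$. Applying strong convexity with the specific subgradients $F'(x_k)$ and $F'(x_{k+1})$ is no stronger than what the paper already does in \eqref{eq:GQ+isolated}, so it is legitimate.

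\textbf{Comparison.} Your argument yields the sharper bound $g_{k+1}\le \frac{2\lambda_k}{\mu}g_k$. It holds for every $k\ge K$, with no smallness of $\lambda_k$ needed and, in this half, without using that $x^*$ is a minimizer. The paper's route needs no use of the acceptance test beyond \eqref{eq:g_k_leq_r_k}, but pays with the absorption step and the factor $4$. Your fallback is essentially the paper's argument, with \eqref{eq:lem_iterates_1} used in place of the error bound at $x_{k+1}$.
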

\begin{proof} 
Applying \eqref{eq:discussion_1} and the triangle inequality, we get, for $k\geq K$
\[
\norm{x_{k+1}-x^*}{}  \leq  \frac{2 \lambda_k}{\mu}\norm{x_k-x_{k+1}}  \leq \frac{2 \lambda_k}{\mu} \left(  \norm{x_{k+1}-x^*}{}  +  \norm{x_k-x^*}{} \right).
\]
Since $\lambda_k \to 0$, for $k$ large enough we have $2\lambda_k/\mu\leq 1/2$ and, hence, we get \eqref{eq:lem_iterates_1}:
\[
\norm{x_{k+1}-x^*}{}  \leq  \frac{4 \lambda_k}{\mu}   \norm{x_k-x^*}{} .
\]
By \eqref{eq:GQ+isolated} we have
\begin{equation}
\norm{x_{k+1}-x^*}{} \leq g_{k+1}/\mu, \qquad \norm{x_{k}-x^*}{} \leq g_{k}/\mu.\label{eq:to_refer_to}    
\end{equation}
Combining this with \eqref{eq:g_k_leq_r_k} and triangle inequality, we obtain, 
\begin{align*}
    g_{k+1} & \overset{\eqref{eq:g_k_leq_r_k}}{\leq} 2\lambda_k r_k  \leq 2\lambda_k\norm{x_{k+1}-x^*}{} + 2\lambda_k\norm{x_{k}-x^*}{} \overset{\eqref{eq:to_refer_to}}{\leq} \frac{2}{\mu}\lambda_kg_{k+1} + \frac{2}{\mu} \lambda_kg_k.
\end{align*}
We obtain \eqref{eq:lem_iterates_2} for the same $k$ large enough s.t. $2\lambda_k/\mu\leq 1/2$.
\end{proof}

The next result establishes the convergence of non-accepted trial points $x_{k,+}=x_+(\lambda_k/4,x_k,x_{\pi(k)})$ to $x^*$ when accepted points $x_k$ converge to $x^*$. Note that the condition $H(x_{\pi(k)}) \succeq 0 $ 
for $k \geq K$ is guaranteed by Assumption 2.
\begin{lem}\label{lem:strange_convergence}
Let $(x_k)_{k \in \mathbb{N}}$ be the iterates of \cref{alg:proximal_newton} and $x_k \to x^*$ as $k \to \infty$. Let also $H(x_{\pi(k)}) \succeq 0 $ 
for $k \geq K$. Then, $x_{k,+}=x_+(\lambda_k/4,x_k,x_{\pi(k)}) \to x^*$ as $k \to \infty$.
\end{lem}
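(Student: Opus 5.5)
The plan is to bound $\|x_{k,+}-x_k\|$ by a multiple of $\|x_{k+1}-x_k\|$ and then use the triangle inequality
\[
\|x_{k,+}-x^*\|\leq \|x_{k,+}-x_k\|+\|x_k-x^*\|.
\]
The second term tends to zero by assumption. So everything reduces to showing $r_{k,+}=\|x_{k,+}-x_k\|\to0$, and for that I will compare the two trial points generated from the same base point $x_k$ with the same operator $H(x_{\pi(k)})$. They differ only in the regularization parameter, $\lambda_k/4$ versus $\lambda_k$.

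\textbf{Step 1 (comparison).} For $k\geq K$ we have $H(x_{\pi(k)})\succeq 0$, so \cref{lem:lambda_dependence} applies with $\lambda=\lambda_k/4$ and $\lambda'=\lambda_k$. Recalling $x_{k+1}=x_+(\lambda_k,x_k,x_{\pi(k)})$, it gives
\[
\|x_{k,+}-x_{k+1}\|\leq \tfrac{3}{4}\|x_k-x_{k,+}\|.
\]

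\textbf{Step 2 (absorption).} By the triangle inequality,
\[
\|x_k-x_{k,+}\|\leq \|x_k-x_{k+1}\|+\|x_{k+1}-x_{k,+}\|\leq r_k+\tfrac34\|x_k-x_{k,+}\|.
\]
Absorbing the last term on the left yields $\|x_k-x_{k,+}\|\leq 4r_k$. This requires $\|x_k-x_{k,+}\|$ to be finite, which holds because the step \eqref{eq:PLMSN_step} is well defined: the subproblem is strongly convex, since $\lambda_k>0$ and $H(x_{\pi(k)})\succeq0$.

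\textbf{Step 3 (conclusion).} Since $x_k\to x^*$, we have $r_k=\|x_k-x_{k+1}\|\leq\|x_k-x^*\|+\|x_{k+1}-x^*\|\to0$. Hence
\[
\|x_{k,+}-x^*\|\leq 4r_k+\|x_k-x^*\|\to 0.
\]

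I expect no real obstacle. The only delicate point is checking that \cref{lem:lambda_dependence} may be used: its hypothesis is $H(z)\succeq0$ with $z=x_{\pi(k)}$, and this is exactly the positive semidefiniteness assumed for $k\geq K$. Note also that the bound $\|x_{k,+}-x_k\|\leq 4r_k$ holds whether or not the linesearch was nontrivial ($j_k>0$ or $j_k=0$). If one wanted to avoid the absorption step, an alternative would be \eqref{eq:rk_leq_gk_over_lambda_k_new}, giving $r_{k,+}\leq 4g_k/\lambda_k$. However, $\lambda_k$ may tend to zero, so that route is weaker and I would not use it.
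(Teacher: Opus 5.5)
Your proof is correct and is essentially the paper's argument. The paper also applies \cref{lem:lambda_dependence} to compare the trial point with $x_{k+1}$ and absorbs the resulting term, getting $\|x_k-x_+(\lambda,x_k,x_{\pi(k)})\|\leq \frac{\lambda_k}{\lambda}\|x_{k+1}-x_k\|$ for general $0<\lambda\leq\lambda_k$. It then sets $\lambda=\lambda_k/4$ and concludes by the triangle inequality, exactly as you do.
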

\begin{proof}
For any $k \geq K$ and $0<\lambda \leq \lambda_k$, by the triangle inequality, the definition $x_{k+1}=x_+(\lambda_k,x_k,x_{\pi(k)})$ in line 9 of \cref{alg:proximal_newton} and \eqref{eq:PLMSN_step}, and \cref{lem:lambda_dependence}, we have
 \begin{align*}
 \| x_k -x_+(\lambda,x_k,x_{\pi(k)})\| &\leq \|x_{k+1} - x_+(\lambda,x_k,x_{\pi(k)})\| + \|x_{k+1} - x_k\| \\&\overset{\eqref{eq:lambda_dependence}}{\leq} \frac{\lambda_k-\lambda}{\lambda_k}\| x_k -x_+(\lambda,x_k,x_{\pi(k)})\| + \|x_{k+1} - x_k\|.
 \end{align*}
Rearranging, we obtain
\begin{equation}
\label{eq:r_k+vsr_k}
    \| x_k -x_+(\lambda,x_k,x_{\pi(k)})\| \leq \frac{\lambda_k}{\lambda} \| x_{k+1} -x_k\|.
\end{equation}
Now, utilising again the triangle inequality, we calculate
\begin{align*}
    \|x_+(\lambda,x_k,x_{\pi(k)}) - x^*\| &\leq \|x_+(\lambda,x_k,x_{\pi(k)})-x_{k}\| + \|x_{k} -x^*\|\leq \frac{\lambda_k}{\lambda} \| x_{k+1} -x_k\| + \|x_{k} -x^*\|.
\end{align*}
The claim follows by setting  $\lambda = \lambda_k/4$
and passing to the limit.%
\end{proof}

\subsection{Refined upper bound on $\lambda_k$}

The next technical lemma is a step towards proving \cref{prop:lambda_k_to_zero} with a refined upper bound on $\lambda_k$, and this lemma establishes an upper bound on $\lambda_k$ in the particular case when the linesearch at iteration $k$ is nontrivial, i.e., $j_k>0$. 

\begin{lem}
\label{lem:helper_lemma_lambda_k_jk_pos}
Let $(x_k)_{k \in \mathbb{N}}$ be the iterates of \cref{alg:proximal_newton}. Let $x_k\to x^*$, $F$ be locally convex around $x^*$, and $H(x_{\pi(k)}) \succeq 0$ 
for $k\geq K$.  Let $\theta \geq 0$ and suppose that there exists a sequence $(\eta_k)_{k \geq K} \subset \R_+$ such that \eqref{eq:eps_condition_1} holds for $k\geq K$.

Let $k\geq K$ be arbitrary and suppose that at iteration $k$ we have $j_k > 0$. Then 
\begin{equation}
    \lambda_k \leq 4 \left(\sqrt{2}\eta_k\right)^{\frac{1}{1+\theta}} g_k^{\frac{\theta}{1+\theta}},\label{eq:helper_lemma_bound_lambda_k_eps_k_g_k}
\end{equation}    
\begin{align}
    \Lambda_{k+1} \leq \left(\sqrt{2}\eta_k\right)^{\frac{1}{1+\theta}} \left(\frac{1}{g_k}\right)^{p-\frac{\theta}{1+\theta}}.\label{eq:helper_lemma_bound_big_Lambda_k_eps_k_g_k}
\end{align}
\end{lem}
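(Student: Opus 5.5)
Since $j_k>0$, the trial value $\lambda=\lambda_k/4$ was rejected. The plan is to show that rejection forces $\lambda_k/4$ to be small compared with $\eta_k$ and $g_k$. So we argue by contradiction: suppose $\lambda:=\lambda_k/4$ satisfies $\lambda^{1+\theta}>\sqrt2\,\eta_k g_k^{\theta}$, and show that $x_{k,+}=x_+(\lambda,x_k,x_{\pi(k)})$ passes both acceptance tests in \eqref{eq:stop_cond_alg}. Write $r_{k,+}=\norm{x_k-x_{k,+}}{}$. The first useful fact is \eqref{eq:rk_leq_gk_over_lambda_k_new} with $\mu=0$, valid because $H(x_{\pi(k)})\succeq 0$ for $k\ge K$; it gives $r_{k,+}\le g_k/\lambda$.

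\emph{First acceptance condition.} Redo the computation \eqref{eq:correctness_proof_1}, but use \eqref{eq:eps_condition_1} in place of the crude bound $L$:
\[
\norm{F'(x_{k,+})+\lambda\mathcal R(x_{k,+}-x_k)}{*}=\norm{M(x_{k,+},x_k)}{*}\le \eta_k r_{k,+}^{1+\theta}\le \eta_k (g_k/\lambda)^{\theta} r_{k,+}.
\]
Expanding the square as in \eqref{eq:correctness_proof_1_1}, the first condition follows once $\eta_k^2(g_k/\lambda)^{2\theta}\le\lambda^2$, i.e.\ $\lambda^{1+\theta}\ge \eta_k g_k^\theta$. This holds under our contradiction hypothesis. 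The expansion in fact yields the stronger bound
\[
\langle F'(x_{k,+}),x_k-x_{k,+}\rangle\ge \tfrac1{2\lambda}\norm{F'(x_{k,+})}{*}^2+\tfrac{\lambda}{2}\Bigl(1-\tfrac{\eta_k^2 g_k^{2\theta}}{\lambda^{2+2\theta}}\Bigr)r_{k,+}^2 ,
\]
and with the factor $\sqrt2$ the bracket is at least $1/2$.

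\emph{Second acceptance condition.} Here the plan is to use local convexity of $F$ near $x^*$. By Lemma~\ref{lem:strange_convergence}, $x_{k,+}\to x^*$, so for large $k$ both $x_k$ and $x_{k,+}$ lie in the convexity ball. (If needed, enlarge $K$; this is harmless because the statement concerns $k\ge K$ where the standing assumptions hold.) Convexity gives $F(x_k)-F(x_{k,+})\ge\langle F'(x_{k,+}),x_k-x_{k,+}\rangle$. Combining this with the strengthened inequality above, dropping the gradient-norm term, and using the bracket bound gives
\[
F(x_k)-F(x_{k,+})\ge \tfrac{\lambda}{2}\cdot\tfrac12\, r_{k,+}^2=\tfrac{\lambda}{4}r_{k,+}^2,
\]
which is exactly the second condition.

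The main obstacle is the bookkeeping of the constant $\sqrt2$: the same threshold has to give both the first condition and the factor $\tfrac12$ needed for the second. With the hypothesis $\lambda^{1+\theta}\ge\sqrt2\,\eta_k g_k^\theta$ we get $\eta_k^2g_k^{2\theta}/\lambda^{2+2\theta}\le1/2$, so the constants match. Since both conditions hold, $\lambda_k/4$ would have been accepted, contradicting $j_k>0$. Hence $\lambda_k/4<(\sqrt2\eta_k)^{1/(1+\theta)}g_k^{\theta/(1+\theta)}$, which is \eqref{eq:helper_lemma_bound_lambda_k_eps_k_g_k}. Finally, \eqref{eq:helper_lemma_bound_big_Lambda_k_eps_k_g_k} follows directly from $\Lambda_{k+1}=4^{j_k}\Lambda_k/4=\lambda_k/(4g_k^p)$ (line 9 of \cref{alg:proximal_newton}) together with \eqref{eq:helper_lemma_bound_lambda_k_eps_k_g_k}.
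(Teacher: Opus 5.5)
Your proof is correct and follows the paper's argument. Both argue by contradiction on the rejected trial value $\lambda_k/4$, redo the expansion \eqref{eq:correctness_proof_1} with \eqref{eq:eps_condition_1} in place of the $L$-bound, bound $r_{k,+}\le 4g_k/\lambda_k$, and use local convexity to pass to the second acceptance test, with the $\sqrt2$ supplying exactly the slack $\tfrac12$. The differences are cosmetic: you obtain the step bound directly from \eqref{eq:rk_leq_gk_over_lambda_k_new} at $\lambda=\lambda_k/4$ instead of via \eqref{eq:r_k+vsr_k}, and you check explicitly through \cref{lem:strange_convergence} that $x_{k,+}$ lies in the convexity ball, which the paper leaves implicit.
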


\begin{proof}
As $j_k >0$, we know that $\lambda_k/4$ and $x_{k,+} = x_+(\lambda_k/4,x_k)=x_+(\lambda_k/4,x_k,x_{\pi(k)})$ were not accepted. %
Recall that we denote $r_{k,+}  = \norm{x_k-x_{k,+}}{}$.
Repeating the same steps as in the derivation of \eqref{eq:correctness_proof_1_1}, but using \eqref{eq:eps_condition_1} instead of \eqref{eq:L_condition} and $\lambda=\lambda_k/4$, we obtain
\begin{align}
\langle F'(x_{k,+}),x_k - x_{k,+} \rangle
\overset{\eqref{eq:eps_condition_1}}&{\geq} \frac{2}{\lambda_k}\norm{F'(x_{k,+})}{*}^2 + \left(\frac{\lambda^2_k- 16\eta_k^2r_{k,+}^{2\theta}}{8\lambda_k}\right) r_{k,+}^2.%
\label{eq:gamma_superlinear_1} 
\end{align}
Since $x_k\to x^*$, w.l.o.g., we have that for $k\geq K$ $x_k$ belong to the ball of local convexity. Using that convexity, that the first term in the r.h.s. of the previous inequality is non-negative, adding and subtracting $\frac{\lambda_kr_{k,+}^2}{16}$ in the r.h.s., we get 
\begin{align}
F(x_k) - F(x_{k,+}) \geq \langle F'(x_{k,+}),x_k - x_{k,+} \rangle
&\geq  \frac{\lambda_kr_{k,+}^2}{16} + \left(\frac{\lambda^2_k- 16\eta_k^2r_{k,+}^{2\theta} -(1\slash 2)\lambda_k^2}{8\lambda_k}\right) r_{k,+}^2\nonumber\\
&=  \frac{\lambda_kr_{k,+}^2}{16} + \left(\frac{(1\slash 2)\lambda^2_k- 16\eta_k^2r_{k,+}^{2\theta}}{8\lambda_k}\right) r_{k,+}^2.
\label{eq:gamma_superlinear_2}  
\end{align}
Note that by \eqref{eq:r_k+vsr_k} with $\lambda=\lambda_k/4$ we have $r_{k,+} \leq 4r_{k}$ and by \eqref{eq:rk_leq_gk_over_lambda_k_new} we have $r_k\leq g_k\slash\lambda_k$. Thus, we have $r_{k,+} \leq 4g_k/\lambda_k$. Assume for a contradiction that
 $ \lambda_k> 4 \left(\sqrt{2}\eta_k\right)^{\frac{1}{1+\theta}} g_k^{\frac{\theta}{1+\theta}}.$ 
Then, 
\begin{align}
    \lambda_k^{2+2\theta} > 4^{2+2\theta} 2\eta_k^2 g_k^{2\theta} \qquad \Longleftrightarrow \qquad \lambda_k^{2} > 32 \eta_k^2\left(\frac{4g_k}{\lambda_k}\right)^{2\theta} \geq   32\eta_k^2 r_{k,+}^{2\theta}.
\end{align}
Whence, we obtain that both the r.h.s. of \eqref{eq:gamma_superlinear_1} and the r.h.s. of \eqref{eq:gamma_superlinear_2} are non-negative and $\lambda_k/4$ is accepted. This is a contradiction. Thus, \eqref{eq:helper_lemma_bound_lambda_k_eps_k_g_k} holds, and \eqref{eq:helper_lemma_bound_big_Lambda_k_eps_k_g_k} follows from this and $\Lambda_{k+1}=\lambda_k/(4g_k^p)$ according to line 9 of \cref{alg:proximal_newton}.
\end{proof}

We are now in a position to prove the refined upper bound on $\lambda_k$ stated in the main text.

    \proplambdaktozero*
\begin{proof}
We first show \eqref{eq:lambda_k_iterative_estimate}. For arbitrary $k\geq K$, if $j_k >0$, by \cref{lem:helper_lemma_lambda_k_jk_pos} we directly get
\begin{align}
    \lambda_k \leq 4 \left(\sqrt{2}\eta_k\right)^{\frac{1}{1+\theta}} g_k^{\frac{\theta}{1+\theta}}\label{eq:eq_with_epsilon_k_case_1}.
\end{align}
If, on the other hand, $j_k=0$, we have by line 9 of \cref{alg:proximal_newton}
\begin{align}
    \lambda_k %
    = \frac{\Lambda_{k}}{4}\cdot 4g_k^p = \frac{\lambda_{k-1}}{4 \cdot 4g_{k-1}^p}\cdot 4g_k^p = \frac{\lambda_{k-1}}{4} \cdot \left(\frac{g_k}{g_{k-1}}\right)^p\overset{\eqref{eq:g_k_leq_g_k-1}}{\leq} \frac{\lambda_{k-1}}{4} 2^p \leq \frac{1}{2}\lambda_{k-1},
\end{align}
where we used that for $p\in[0,1]$, $2^p\leq2$.
Thus, we have \eqref{eq:lambda_k_iterative_estimate}. 

The remaining inequalities \eqref{eq:Lambda_k_rate_bound}, \eqref{eq:lambda_k_rate_bound},  are shown by induction. %
\begin{enumerate}[label=(\roman*), wide, labelwidth=!, labelindent=0pt]\itemsep=0em
\item[\textbf{Base case $k=K$}.] 
If $j_K > 0$ then we have \eqref{eq:Lambda_k_rate_bound} and \eqref{eq:lambda_k_rate_bound}  by \cref{lem:helper_lemma_lambda_k_jk_pos} and the bound  $0\leq \eta_K \leq C$.  
Now suppose that $j_K=0$; then $\Lambda_{K+1} = \Lambda_K\slash 4 \leq \Lambda_K \leq \left(\sqrt{2}C\right)^{\frac{1}{1+\theta}} \left(\frac{1}{g_K}\right)^{p-\frac{\theta}{1+\theta}}$ by assumption \eqref{eq:Lambda_0_assumpt}, and we also have
\[
\lambda_K = \Lambda_K g_K^p \leq 4 \left(\sqrt{2}C\right)^{\frac{1}{1+\theta}} g_K^{\frac{\theta}{1+\theta}}.
\]
Hence, the base case holds. 

\item[\textbf{Inductive step}.] Now consider iteration $k$ and consider again the two cases.

{If $j_k>0$,}  
again by \cref{lem:helper_lemma_lambda_k_jk_pos} we directly get
\eqref{eq:eq_with_epsilon_k_case_1}, and by line 9 of \cref{alg:proximal_newton}
\begin{align}
    \Lambda_{k+1}=\lambda_k/(4g_k^p) \overset{\eqref{eq:eq_with_epsilon_k_case_1}}{\leq} \left(\sqrt{2}\eta_k\right)^{\frac{1}{1+\theta}} \left(\frac{1}{g_k}\right)^{p-\frac{\theta}{1+\theta}}.
\end{align}
Thus the inductive step holds in this case since $0\leq \eta_k\leq C$.

{If $j_k=0$,} we have, using that $g_k \leq  2g_{k-1}$ by \eqref{eq:g_k_leq_g_k-1}, and that $p-\theta/(1+\theta)\geq 0 $ for all $\theta \in [0,1]$ and $p \in [1/2,1]$,  making use of the induction hypothesis, we derive 
\begin{align}
    \Lambda_{k+1}=\frac{\Lambda_{k}}{4}\overset{\text{(inductive hypothesis)}}&{\leq} \frac{1}{4}\left(\sqrt{2}C\right)^{\frac{1}{1+\theta}} \left(\frac{1}{g_{k-1}}\right)^{p-\frac{\theta}{1+\theta}} \\
    \overset{\eqref{eq:g_k_leq_g_k-1}}&{\leq} \frac{2^{p-\frac{\theta}{1+\theta}}}{4}\left(\sqrt{2}C\right)^{\frac{1}{1+\theta}} \left(\frac{1}{g_{k}}\right)^{p-\frac{\theta}{1+\theta}} \\
    &\leq \left(\sqrt{2}C\right)^{\frac{1}{1+\theta}} \left(\frac{1}{g_{k}}\right)^{p-\frac{\theta}{1+\theta}},
\end{align}
where in the last step we used that $p-\frac{\theta}{1+\theta}\leq 1$ for 
all $\theta \in [0,1]$ and $p \in [1/2,1]$.
Hence, we have also by line 9 of \cref{alg:proximal_newton} and $j_k=0$
\begin{align}
    \lambda_k=\Lambda_{k} g_k^p=\Lambda_{k+1} \cdot 4g_k^p \leq 4 \left(\sqrt{2}C\right)^{\frac{1}{1+\theta}} g_k^{\frac{\theta}{1+\theta}}.
\end{align}
\end{enumerate}
Combining the two cases we finish the inductive step. Thus,  \eqref{eq:Lambda_k_rate_bound} and \eqref{eq:lambda_k_rate_bound} hold for all $k\geq K$.

\end{proof}

\subsection{Proofs of superlinear convergence under semismoothness}
We are finally ready to prove the main results for the semismooth and $\gamma$-order semismooth cases. We start with the first result that says that if convergence is already established, under semismoothness at $x^*$ (see \cref{ass:semismoothness}) and the Dennis--Moré condition, it accelerates to superlinear asymptotically.

\thmfastlocal*
    \begin{proof}
 Recall the notation \eqref{eq:F_r_g_notaion} and in particular that $x_{k,+} \coloneqq x_+(\lambda_k/4,x_k, x_{\pi(k)})$ is the last nonaccepted trial point if the linesearch was not trivial, i.e., $j_k>0$. Note that by \cref{lem:strange_convergence} we have that $x_{k,+} \to x^*$ as $k \to \infty$.
    Since we are in the asymptotic regime, we take $k$ large enough so that any of the preceding results and properties (that hold for large enough $k$ or in a ball around the minimum $x^*$) that we may require below hold. 
 
    We can use \cref{lem:useful_bounds_new_strong} with $\lambda=\lambda_k/4$ to obtain from \eqref{eq:good_bound_1} and \eqref{eq:good_bound_2}%
    \begin{align}
        \| x_{k,+} - x^*\|  \leq a\| x_{k,+}  - x_k\|,  \quad 
        \norm{x_k-x^*}{}  \leq b\| x_{k,+}  - x_k\|,\label{eq:good_bound_abstract_2}
    \end{align}
    where $a,b>0$ are constants.
    Using these inequalities, by the triangle inequality, semismoothness at $x^*$ \eqref{eq:semismooth_at_x*}, and the Dennis--Moré-type condition \eqref{eq:DM_condition}, we have %
    \begin{align}
    \norm{ f'(x_{k,+}) - f'(x_{k}) - H(x_k)(x_{k,+} - x_k) }{*} &\leq
        \norm{ f'(x_{k,+}) - f'(x^*) - H(x_{k,+})(x_{k,+} - x^*) }{*} \nonumber\\
        &\quad +  \norm{ f'(x_k) - f'(x^{*}) - H(x_k)(x_{k} - x^*) }{*}\nonumber\\
        &\quad +  \norm{ (H(x_{k,+}) - H(x_{k}))(x_{k,+} - x^*) }{*} \nonumber\\
         \overset{\eqref{eq:semismooth_at_x*},\eqref{eq:DM_condition}}&{=} \so(\| x_{k,+} -x^*\|) +\so(\| x_{k} -x^*\|) + \so(\| x_{k,+} -x_k\|) \nonumber\\
        \overset{\eqref{eq:good_bound_abstract_2}}&{=} \so(\| x_{k,+} -x_k\|).\label{eq:proof_part_i}
    \end{align}
It further follows by the Dennis--Moré-type condition \eqref{eq:lazy_DM_condition} that
\begin{align*}
        \norm{ f'(x_{k,+}) - f'(x_{k}) - H(x_{\pi(k)})(x_{k,+} - x_k) }{*} &\leq \norm{ f'(x_{k,+}) - f'(x_{k}) - H(x_k)(x_{k,+} - x_k) }{*}\\
        &\quad + \norm{(H(x_k)-H(x_{\pi(k)})(x_{k,+} - x_k)}{*}\\
        \overset{\eqref{eq:proof_part_i}, \eqref{eq:lazy_DM_condition}}&{=} \so(\| x_{k,+} -x_k\|).
    \end{align*}
    Thus there exists a sequence $(\varepsilon_k)_{k \in \mathbb{N}} \subset \R_+$ such that $\varepsilon_k \to 0$ and, as $k\to \infty$,
    \begin{align*}
            \norm{ f'(x_{k,+}) - f'(x_{k}) - H(x_{\pi(k)})(x_{k,+} - x_k) }{*} \leq \varepsilon_k\| x_{k,+} -x_k\|.
    \end{align*}
    This is \eqref{eq:eps_condition_1} with $\eta_k=\varepsilon_k$ and $\theta=0$. Applying \cref{prop:lambda_k_to_zero}, we obtain \eqref{eq:lambda_k_iterative_estimate} with $\theta=0$ and $\eta_k=\varepsilon_k\to0$ as $k\to \infty$. Applying further \cref{lem:helper_lambdak_zero}, we obtain that $\lambda_k\to0$ as $k\to \infty$. Finally, applying \cref{lem:iterates_g_k_and_lambda_k}, we get the claimed superlinear convergence as $k\to \infty$:
    \begin{align*}
        \norm{x_{k+1}-x^*}{} &\overset{\eqref{eq:lem_iterates_1}}{\leq} \frac{4\lambda_k}{\mu} \norm{x_k-x^*}{}= \so(\|x_k -x^*\|), \\
        \norm{ F'(x_{k+1})}{*} &\overset{\eqref{eq:lem_iterates_2}}{\leq} \frac{4\lambda_k}{\mu}\norm{F'(x_k)}{*} = \so(\norm{F'(x_k)}{*}) .  
    \end{align*}    
\end{proof}

The next lemma gives a sufficient condition for the Dennis--Moré-type conditions \eqref{eq:DM_condition} and \eqref{eq:lazy_DM_condition}.

\lemlazyDMHolds*
    \begin{proof}
    By \cref{lem:strange_convergence}, we obtain that $x_{k,+} \to x^*$ as $k\to \infty$. By \eqref{eq:good_bound_1} with $\lambda=\lambda_k/4$, recalling that $x_{+}(\lambda_k/4,x_k,x_{\pi(k)})=x_{k,+}$, we get that there is a constant $c>0$ s.t. $\norm{x_{k,+} - x^*}{}\leq c \norm{x_{k,+} - x_k}{}$. Thus,  
    \begin{align*}
        \norm{ (H(x_{k,+}) - H(x_{k}))(x_{k,+} - x^*) }{*} \leq \norm{ H(x_{k,+}) - H(x_{k})}{\mathrm{op}} \cdot c \norm{x_{k,+} - x_k}{}
    \end{align*}
    and \eqref{eq:DM_condition} follows by the continuity of $H$ and the fact that $x_{k,+} \to x^*$ and $x_{k} \to x^*$.
     Further, we have
        \begin{align*}
            \norm{(H(x_k)-H(x_{\pi(k))})(x_{k,+} - x_k)}{*} &\leq \norm{H(x_k)-H(x_{\pi(k)})}{\mathrm{op}}\norm{x_{k,+} - x_k}{}
        \end{align*}
        and since $x_k$ and $x_{\pi(k)}$ both converge to $x^*$, 
        we get \eqref{eq:lazy_DM_condition} by continuity of $H$.
    \end{proof}
Note that by applying the triangle inequality to $H$, continuity may be relaxed to hold only at $x^*$.

We finally prove the main theorem under semismoothness.

\thmglobalsuperlinear*
\begin{proof}
Since the assumptions of \cref{thm:global_convergence_nonconv_PL} hold, we obtain the claimed linear convergences. Moreover, $x_k \to x^*$ as $k \to \infty$ where $x^*$ is a global minimum. %
The latter and additional local assumptions mean that all the assumptions of \cref{thm:fast_local} hold and we have that $\lambda_k\to 0$, which by \cref{thm:global_convergence_nonconv_PL} and  \cref{thm:fast_local} imply the claimed superlinear convergence.
\end{proof}   

According to the above theorem, the whole convergence picture is as follows. From any starting point, the method converges linearly to a solution $x^*$ thanks to the PL condition. Since the linear convergence $x_k\to x^*$ is guaranteed, at some point $x_k$ starts to belong to the ball around $x^*$ where the local strong convexity of \cref{ass:strong_convexity} holds. Asymptotic semismoothness and DM conditions also start to work after some iteration when $x_k$ is sufficiently close to $x^*$. All this implies that $\lambda_k\to0$ and the method starts to converge superlinearly when it is sufficiently small. In that sense, the superlinear convergence is local and asymptotic, but the linear rate is global and nonasymptotic.

\subsection{Proofs of superlinear convergence under $\gamma$-order semismoothness}

We now move on to the $\gamma$-order semismooth setting, where we manage to quantify the rate of superlinear convergence.
    
\thmfastlocalgamma*
    \begin{proof}

    As in the proof of \cref{thm:fast_local}, we take below $k$ sufficiently large so that all the needed local or asymptotic properties hold.   
    Note again that by \cref{lem:strange_convergence} we have that $x_{k,+} \to x^*$ as $k \to \infty$. By the triangle inequality, $\gamma$-order semismoothness at $x^*$ \eqref{eq:gamma_order_semismooth_at_x*}, and the $\gamma$-order Dennis--Moré-type condition \eqref{eq:gamma_order_DM_condition}, we have
    \begin{align*}
    \norm{ f'(x_{k,+}) - f'(x_{k}) - H(x_k)(x_{k,+} - x_k) }{*} &\leq
        \norm{ f'(x_{k,+}) - f'(x^*) - H(x_{k,+})(x_{k,+} - x^*) }{*} \nonumber\\
        &\quad +  \norm{ f'(x_k) - f'(x^{*}) - H(x_k)(x_{k} - x^*) }{*}\nonumber\\
        &\quad +  \norm{ (H(x_{k,+}) - H(x_{k}))(x_{k,+} - x^*) }{*} \nonumber\\
         \overset{\eqref{eq:gamma_order_semismooth_at_x*},\eqref{eq:gamma_order_DM_condition}}&{=} \mathcal{O}(\| x_{k,+} -x^*\|^{1+\gamma}) + \mathcal{O}(\| x_{k} -x^*\|^{1+\gamma}) + \mathcal{O}(\| x_{k,+} -x_k\|^{1+\gamma}) \nonumber\\
        \overset{\eqref{eq:good_bound_abstract_2}}&{=} \mathcal{O}(\| x_{k,+} -x_k\|^{1+\gamma}).
    \end{align*}
    Thus, there exists a bounded sequence $(\eta_k)_{k \in \mathbb{N}} \subset \R_+$ such that 
    \begin{align*}
             \norm{ f'(x_{k,+}) - f'(x_{k}) - H(x_k)(x_{k,+} - x_k) }{*}  
            &\leq \eta_k \| x_{k,+} -x_k\|^{1+\gamma}.
        \end{align*}
    This is \eqref{eq:eps_condition_1} with $\theta=\gamma$ and $\pi(k)=k$. Applying \cref{prop:lambda_k_to_zero}, we obtain \eqref{eq:lambda_k_rate_bound} with $\theta=\gamma$, which tells us that the sequence of regularisation parameters goes to zero at the rate $\lambda_k = \mathcal{O}(g_k^{\frac{\gamma}{1+\gamma}})$. Applying \cref{lem:iterates_g_k_and_lambda_k}, we get the claimed $\frac{1+2\gamma}{1+\gamma}$-order superlinear convergence of subgradients as $k\to \infty$
    \begin{align*}
        \norm{ F'(x_{k+1})}{*} &\overset{\eqref{eq:lem_iterates_2}}{\leq} \frac{4\lambda_k}{\mu}\norm{F'(x_k)}{*} = \mathcal{O}(\norm{F'(x_k)}{*}^{\frac{1+2\gamma}{1+\gamma}}) .  
    \end{align*}

    The convergence rate for the iterates is derived as follows. First, we have the following chain of inequalities
    \begin{align*}
        g_{k} &\overset{\eqref{eq:g_k_leq_r_k}}{\leq} 2\lambda_{k-1} r_{k-1} \overset{\eqref{eq:F_r_g_notaion}}{\leq}  2\lambda_{k-1} \left(\norm{x_{k}-x^*}{} + \norm{x_{k-1}-x^*}{} \right) \\
        &\overset{\eqref{eq:lem_iterates_1}}{\leq} 2\lambda_{k-1}  \left(\frac{4}{\mu}\lambda_{k-1} \norm{x_{k-1}-x^*}{} + \norm{x_{k-1}-x^*}{} \right) \overset{\eqref{eq:lambda_bound}}{\leq}  \mathcal{O}(\norm{x_{k-1}-x^*}{}).
    \end{align*}
    Note that in the meantime we obtained also that $\norm{x_{k}-x^*}{}=\mathcal{O}(\norm{x_{k-1}-x^*}{})$.
    We finish the proof as follows:
    \begin{align*}
    \|x_{k+1} - x^*\| \overset{\eqref{eq:lem_iterates_1}}{\leq} \frac{4\lambda_{k}}{ \mu}   \|x_k -x^*\|=\mathcal{O}(g_k^{\frac{\gamma}{1+\gamma}}\|x_k -x^*\|)=\mathcal{O}(\norm{x_{k-1}-x^*}{}^{\frac{\gamma}{1+\gamma}} \cdot \norm{x_{k-1}-x^*}{}).
    \end{align*}    
\end{proof}
Let us note that the rate in \cref{rem:improvement} follows directly by using $g_k \leq (L\slash 2)\norm{x_k-x^*}{}$ (see (2.1.11) in \cite{nesterov2018lectures}) in the equality in the final displayed equation in the proof above. 

The next lemma gives a sufficient condition for the Dennis--Moré-type condition \eqref{eq:gamma_order_DM_condition}.

\lemctwoimpliesdm*
\begin{proof}
By \cref{lem:strange_convergence}, we have that $x_{k,+} \to x^*$ as $k\to \infty$. By \eqref{eq:good_bound_1} and \eqref{eq:good_bound_2} with $\lambda=\lambda_k/4$, recalling that $x_{+}(\lambda_k/4,x_k,x_{\pi(k)})=x_{k,+}$, we get that there are constants $c_1,c_2>0$ s.t. $\norm{x_{k,+} - x^*}{}\leq c_1 \norm{x_{k,+} - x_k}{}$ and $\norm{x_{k} - x^*}{}\leq c_2 \norm{x_{k,+} - x_k}{}$.
This, together with the triangle inequality 
gives us
\begin{align*}
\frac{\norm{(H(x_{k,+}) - H(x_k))(x_{k,+} - x^*)}{*}}{\| x_{k,+} - x_k\|} &= \frac{\norm{(H(x_{k,+}) -H(x^*) + H (x^*) - H(x_k))(x_{k,+} - x^*)}{*}}{\| x_{k,+}- x_k\|} \\
&\leq \frac{(\norm{H(x_{k,+}) -H(x^*)}{\mathrm{op}} + \norm{H(x^*) - H(x_k)}{\mathrm{op}})\norm{x_{k,+} - x^*}{}}{\| x_{k,+} - x_k\|} \\
\overset{\eqref{eq:good_bound_1}}&{\leq}c_1(\norm{H(x_{k,+}) -H(x^*)}{\mathrm{op}} + \norm{H(x^*) - H(x_k)}{\mathrm{op}})\\
&\leq c_1(\norm{x_{k,+} - x^*}{}^\gamma + \norm{x^* - x_k}{}^\gamma)\\
\overset{\eqref{eq:good_bound_1}, \eqref{eq:good_bound_2}}&{\leq} c_1(c_1^\gamma\norm{x_{k,+} - x_k}{}^\gamma + c_2^\gamma\norm{x_{k,+} - x_k}{}^\gamma).
\end{align*}
This clearly implies the condition \eqref{eq:gamma_order_DM_condition}.
\end{proof}

\thmglobalsuperlineargamma*
\begin{proof}
Since the assumptions of \cref{thm:global_convergence_nonconv_PL} hold, we obtain the claimed linear convergences. Moreover, $x_k \to x^*$ as $k \to \infty$ where $x^*$ is a global minimum.
The latter and additional local assumptions, since \cref{ass:gamma_order_semismoothness}  implies \cref{ass:semismoothness}, mean that all the assumptions of
\cref{thm:fast_local} hold  and we have that $\lambda_k\to 0$, which by \cref{thm:global_convergence_nonconv_PL} and  \cref{thm:fast_local} imply the claimed superlinear convergence.
Finally, $\frac{1+2\gamma}{1+\gamma}$-order superlinear convergence of subgradients and two-step $\frac{1+2\gamma}{1+\gamma}$-order superlinear convergence of $x_k$ follow from \cref{thm:fast_local_gamma}.
\end{proof}   

The above theorem complements the convergence picture described after the proof of \cref{thm:global_superlinear} by quantifying the order of local superlinear convergence.

\subsection{Proofs of superlinear convergence under $C^2$ assumption}
    \thmglobalconvergencectwo*
\begin{proof}
Since the assumptions of \cref{thm:global_convergence_nonconv_PL} hold, we obtain the claimed linear convergences.

Recall again the notation by $x_{k,+} \coloneqq x_+(\lambda_k/2,x_k,x_{\pi(k)})$ (see \eqref{eq:PLMSN_step} and \eqref{eq:F_r_g_notaion}).
Since we are in the finite dimensional setting, we write $f'\equiv \nabla f$.

The idea is to show that \eqref{eq:eps_condition_1} holds with $\eta_k\to0$ and $\theta=0$ despite the fact that we don't have local strong convexity in this case. This will then imply \eqref{eq:lambda_k_iterative_estimate} by \cref{prop:lambda_k_to_zero}, and applying \cref{lem:helper_lambdak_zero} we will obtain $\lambda_k \to 0$. Thus it suffices to show
\begin{equation}
    \omega_k\coloneqq   \frac{ \norm{\nabla f(x_{k,+}) -  \nabla f(x_k) - \nabla^2 f(x_{\pi(k)})(x_{k,+}-x_k)}{*}}{\|x_{k,+} - x_k\|} \to 0 \quad \text{as $k \to \infty$.} \label{eq:higher_regularity_11}
\end{equation}
For this purpose, fix an arbitrary $\varepsilon>0$. By assumption, there exists a ball $B_R(x^*)$ on which $\nabla^2 f$ exists and is continuous. Moreover, by local convexity and twice differentiability of $f$ we obtain $H(x)\coloneqq  \nabla^2 f(x)\succeq 0$ for all $x \in B_R(x^*)$. In addition, using compactness of $B_R(x^*)$ and the Heine--Cantor theorem, we get that $\grad^2 f$ is even uniformly continuous on $B_R(x^*)$. The latter means that there exists $\delta>0$ such that for every $x,y \in B_R(x^*)$ with $\| x -y \| \leq \delta$, we have 
\begin{equation*}
    \|\nabla^2 f(x) - \nabla^2 f(y)\|_{\mathrm{op}} \leq \varepsilon. %
\end{equation*}
As $x_k \to x^*$, also $x_{\pi(k)} \to x^*$ and there exists $K_1$ such that $k \geq K_1$ implies $x_k, x_{\pi(k)} \in B_R(x^*)$. %
This in turn implies that, for $k \geq K_1$, $H(x_k) \succeq 0$. %
We conclude by \cref{lem:strange_convergence} that $ x_{k,+} \to x^*$ as well.
We further infer that there is an index $K_2 \geq K_1$ such that $x_k, x_{\pi(k)}, x_{k,+} \in B_R(x^*)$, 
$\norm{x_{k,+}-x_k}{} \leq \delta\slash 2$ and  $\norm{x_k-x_{\pi(k)}}{} \leq \delta\slash 2$  for all $k \geq K_2$. 
Consequently, we deduce from the fundamental theorem of calculus that, for all $k \geq K_2$,
\begin{align}
    &\norm{\nabla f(x_{k,+}) -  \nabla f(x_k) -  \nabla^2 f(x_{\pi(k)})(x_{k,+}-x_k)}{*}  \notag\\
    &= \norm{ \int_0^1 (\nabla^2f(x_k + t(x_{k,+} - x_k)) - \nabla^2 f(x_{\pi(k)}))(x_{k,+} - x_k) \, \mathrm{d} t }{*} \notag \leq 
    \varepsilon \|x_{k,+} - x_{k}\|
\end{align}
as, clearly, for $x_t \coloneqq x_{k} + t(x_{k,+}-x_{k})$, we have $x_t \in B_R(x^*)$ and 
\[\norm{x_t - x_{{\pi(k)}}}{} \leq \norm{x_k - x_{\pi(k)}}{} + \norm{x_{k,+}-x_k}{} \leq \delta\]
for any  $t\in (0,1)$. Since the choice of $\varepsilon>0$ was arbitrary, we infer \eqref{eq:higher_regularity_11}. As  described at the start of the proof, this implies $\lambda_k \to 0$. This, by \cref{thm:global_convergence_nonconv_PL} implies that all the convergences $x_k \to x^*, \quad F(x_k)\to F^* , \quad \norm{F'(x_k)}{*} \to 0 $ are superlinear.
\end{proof}
\section{Experimental details}\label{sec:app_experiments}
Our code modifies the framework provided in \cite{super-newton}. The SVM example is also modified from \cite{github:leapssn}.  All computed results reported in the paper were done on the CPU of an Apple M1 Pro system with 16GB of RAM.
\subsection{Neural networks with Lipschitz constraints}
\paragraph{Semismooth differentiability.}
Let us briefly discuss semismoothess properties of the objective in \eqref{eq:Lipschitz_objective}. For simplicity in our experiment we use $d=1$, so the neural network $\Phi(\cdot,\theta)\colon \mathbb{R}^n \to \mathbb{R}$. Moreover we assume that $\Phi(\cdot;\theta)$ uses $C^3$ activation functions (in our experiment we use $\tanh$), so that $x\mapsto \nabla_x^2\Phi(x;\theta)$ exists and is continuous. For a matrix $A$, the squared
spectral norm satisfies $\|A\|_2^2=\lambda_{\max}(A^\top A)$ and is in general not differentiable when
the leading singular value has multiplicity larger than one, see \citep[Theorem~2]{overton1992large}
. Following \cite{pesquet2021learning,hurault2022proximal}, we therefore replace the exact spectral norm by a $C^\infty$ surrogate obtained from unrolling a fixed number $K \in \mathbb{N}$ of power iterations, see the paragraph below. The remaining nonlinearity in the penalty,
$t\mapsto (\max\{t,0\})^2$, is $C^1$ with a semismooth gradient; consequently, after replacing $\|\nabla_x^2\Phi(x,\theta)\|_2^2$ by the power-iteration surrogate, the objective \eqref{eq:Lipschitz_objective}
is $C^1$ with a semismooth gradient as well by the chain rule for semismooth functions (see, e.g.,\cite{ulbrich2011semismooth}). While our theory is stated for the exact penalty, using a finite number $K$ of power iterations amounts to a controlled inexactness (a slight relaxation of the modelling assumptions); of significance is that, in this experiment the method remained stable and effective under this
approximation, indicating that it can tolerate such inexact penalty evaluations in practice.

\paragraph{Power-iteration surrogate via Hessian-vector products.}
For each sample $x_i$, we approximate $\|\nabla_x^2\Phi(x_i;\theta)\|_2^2$ by $K$ steps of (batched) power iteration, see \citep[Chapter 8]{golub2013matrix}. Let $H_i(\theta):=\nabla_x^2\Phi(x_i;\theta)\in\mathbb{R}^{n\times n}$. Note that $H_i(\theta)$ is symmetric by our chosen regularity of the activation function. Hence $\|H_i(\theta)\|_2^2 = \lambda_{\max}(H_i(\theta))^2$ and the power iteration aims to approximate $\lambda_{\max}(H_i(\theta))$. For this purpose we initialize $v_i^{(0)}\in\mathbb{R}^n$ with $\|v_i^{(0)}\|_2=1$. For $k=0,\dots,K-1$, we then iterate
\[
w_i^{(k)}(\theta)=H_i(\theta)\,v_i^{(k)}(\theta),\qquad
v_i^{(k+1)}(\theta)=\frac{w_i^{(k)}(\theta)}{\|w_i^{(k)}(\theta)\|_2},
\qquad k=0,\hdots,K-1
\]
and set the estimator $\widehat{s}_K(x_i;\theta)=\|w_i^{(K-1)}\|_2 \approx \lambda_{\max}(H_i(\theta))$. In practice we do not form $H_i$
explicitly; instead we compute $H_i(\theta)v$ using automatic differentiation. In our implementation, we run the above power iteration over the full batch.
\begin{remark}[Differentiability of truncated power iteration]
Since matrix-vector multiplication is smooth and the normalization map
$g(w)=w/\|w\|_2$ is $C^\infty$ on $\mathbb{R}^d\setminus\{0\}$, it follows by
repeated application of the chain rule (for smooth functions) that the mappings $\theta\mapsto v_i^{(k)}(\theta)$ and $\theta\mapsto \widehat{s}_K(x_i;\theta)$ are as smooth as $\theta\mapsto H_i(\theta)$ on the set
\[
\Omega_{i,K}:=\Big\{\theta:\ \|w_i^{(k)}(\theta)\|_2>0 \text{ for all } k=0,\dots,K-1\Big\}.
\]
In particular, if
$\theta\mapsto H_i(\theta)$ is $C^1$, then
$\theta\mapsto \widehat{s}_K(x_i;\theta)$ is $C^1$ on
$\Omega_{i,K}$. Outside $\Omega_{i,K}$, some iterate satisfies $w_i^{(k)}(\theta)=0$, in which case the normalization step is undefined; thus truncated power iteration (and hence $\widehat{s}_K$) cannot be expected to be differentiable there. 
In our implementation, we therefore additionally safeguard against numerical
degeneracy by using the regularized normalization
$v=w/\sqrt{\|w\|_2^2+\varepsilon^2}$, with $\varepsilon=10^{-12}$, which makes the recursion globally $C^\infty$.
\end{remark}

\paragraph{Experimental data.}
We generate $M=100$ training inputs $x_i\in[0,2\pi]^2$ uniformly at random and define targets
\[
y_i = \sin(x_{i,1})\cos(x_{i,2})
+0.2\,\sin(2x_{i,1}+x_{i,2})
+0.1\,x_{i,1}x_{i,2}
+ \sigma z_i,
\]
with i.i.d.\ Gaussian noise $z_i\sim\mathcal{N}(0,1)$ and $\sigma = 0.05$.

\paragraph{Network architecture.}
We use a fully-connected network $\Phi(\cdot;\theta)\colon\mathbb{R}^2\to\mathbb{R}$ with widths
$2\!-\!16\!-\!16\!-\!1$ and $\tanh$ activations.

\paragraph{Optimization and parameters.}
We apply Algorithm~\ref{alg:proximal_newton} to the objective in \eqref{eq:Lipschitz_objective} with penalty parameter $\lambda=1.0$ and $\psi \equiv 0$, so that the proximal subproblem reduces to solving a linear system. The Hessian is computed via automatic differentiation in PyTorch. At each inner iteration, we solve a damped Newton system of the form $(H + \lambda_k I)s = g$. Because the objective is nonconvex, $H$ may be indefinite, and $H + \lambda_k I$ can be ill-conditioned or singular; in such cases, we fall back to a least-squares solve. In this experiment, we symmetrize the Hessian as $\tfrac12(H + H^\top)$ before solving the linear system.

As a baseline, we minimize the same objective \eqref{eq:Lipschitz_objective} using PyTorch's \adam optimizer with default momentum parameters and no weight decay. We run $T=10^4$ full-batch optimization steps. The learning rate is initialized to $\eta = 7 \times 10^{-4}$ and adapted using \texttt{ReduceLROnPlateau} (factor $0.5$, patience $5$, threshold $10^{-5}$, and minimum learning rate $10^{-6}$). In both algorithms, the Hessian penalty is computed as described above using $K=3$ power iterations with a deterministic initialization. Additional hyperparameters are reported in \cref{tab:hparams}. We test Algorithm~\ref{alg:proximal_newton} under four Hessian-update schedules, recomputing the Hessian every $m$ iterations with $m \in \{1,5,10,20\}$, where $m$ is defined in \eqref{eq:def_m}.

\begin{table}[t]
\centering
\caption{Optimization hyperparameters for training the Lipschitz-constrained neural network.}
\label{tab:hparams}
\begin{tabular}{lll}
\toprule
Method & Hyperparameter & Value \\
\midrule
\multirow{6}{*}{\gladssn} 
& coefficient $\Lambda_{0}$  & $1.0$ \\
& gradient exponent $p$ & $0.5$ \\
& power iterations $K$ & $3$ \\
& Hessian update frequencies $m$ & $\{1,5,10,20 \}$  \\
\midrule
\multirow{6}{*}{\adam}
& steps $T$ & $10^4$ \\
& batch size $B$ & $M$ (full batch) \\
& learning rate $\eta$ & $7\times 10^{-4}$ \\
& scheduler & \texttt{ReduceLROnPlateau}\\
& schedule params & factor $0.5$, patience $5$, threshold $10^{-5}$, min lr $10^{-6}$ \\
& power iterations $K$ & $3$  \\
\bottomrule
\end{tabular}
\end{table}

\begin{remark}[On the choice of the spectral norm]
Following \cite{hurault2022proximal,pesquet2021learning}, we use the spectral norm
$\lVert\cdot\rVert_{2}$ to ensure that
$\nabla_x\Phi(\cdot,\theta)$ is non-expansive. Other matrix norms could be used as well,
however, in applications the spectral norm typically performs better; see
\citep[Remark~3.4]{pesquet2021learning}.
\end{remark}

\subsection{Non-negative matrix factorization}
\paragraph{Semismooth differentiability.}
The smooth quadratic data-fit part of \eqref{eq:mat_fac_2} is a polynomial in the entries of $(U,V)$ and hence $C^\infty$. To show semismooth differentiability of the whole objective it remains to show that $U \mapsto $
\(
\|(-U)^+\|_F^2=\|U_-\|_F^2
\)
is $C^1$ with semismooth gradient. The continuous differentiability directly follows from the differentiability of $t\mapsto \max\{-t,0\}^2$. Its derivative is the piecewise affine map
\(
t\mapsto \min\{t,0\}.
\)
Piecewise affine mappings are strongly semismooth (i.e., $1$-order semismooth), cf.  \cite{facchinei2003finite}. Hence the whole objective is $\gamma$-order semismooth with $\gamma = 1$.
\paragraph{Experimental data.} 
We consider a synthetic non-negative matrix factorization problem with dimensions
$d=200$, $n=100$ and target rank $r=12$. We draw ground-truth non-negative factors
$U_{\rm true}\in\mathbb{R}_+^{d\times r}$ and $V_{\rm true}\in\mathbb{R}_+^{n\times r}$ with i.i.d.\ entries $[U_{\rm true}]_{ij},[V_{\rm true}]_{ij}\sim\mathrm{Unif}(0,1)$, and form the clean matrix
\[
Y_{\rm clean}=U_{\rm true}V_{\rm true}^\top\in\mathbb{R}_+^{d\times n}.
\]
The observed matrix is
\[
Y \equiv M_{\rm obs}=Y_{\rm clean}+\sigma Z,\qquad Z_{ij}\stackrel{\text{i.i.d.}}{\sim}\mathcal{N}(0,1),
\]
and in the reported setting we take $\sigma=0.02$ (noiseless observations).
We set the ridge parameter to $\alpha=10^{-2}$ and the non-negativity penalty weight to $\beta=10^{-2}$ in \eqref{eq:mat_fac_2}.
The algorithm is initialized from a Gaussian vector $x_0\in\mathbb{R}^{dr+nr}$ with i.i.d.\ entries
$x_{0,k}\sim\mathcal{N}(0,0.5^2)$, which is reshaped into initial factors
$U_0\in\mathbb{R}^{d\times r}$ and $V_0\in\mathbb{R}^{n\times r}$ by splitting $x_0$ into blocks of sizes
$dr$ and $nr$.

We apply Algorithm~\ref{alg:proximal_newton} with $\Lambda_0 = 1$ and compare different gradient exponents $p \in \{0.5,0.75\}$. Again we have $\psi \equiv 0$, and the subproblems become linear systems, which we solve using the MINRES algorithm due to the present non-convexity. We compare our algorithm against \leapssn  presented in \cite{LeapSSN} in the setting suggested in this paper and a standard gradient descent with Armijo linesearch.

In \cref{fig:MF_func} we show also plots of the function residual vs. time and iteration count.
\begin{figure}[h!]
\centering
 \includegraphics[width = 1\textwidth]{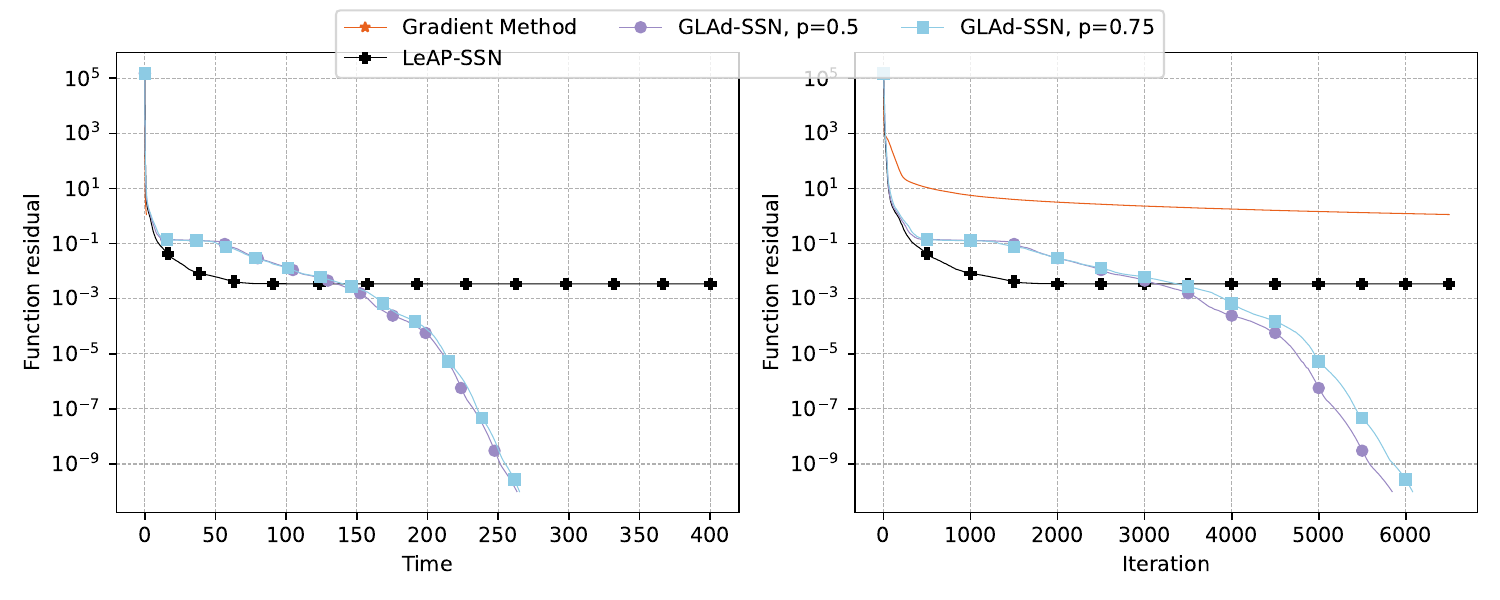}
\caption{Non-negative matrix factorization (\cref{sec:numericsNMF}): a plot of function residuals shows similar qualitative behaviour to the gradient norm (see \cref{fig:MF_grad}). }
\label{fig:MF_func}
\end{figure}

\subsection{Support vector classification}

\paragraph{Semismooth differentiability.}
The regularization term $\frac12\|\omega\|_{\ell^2}^2$ is a polynomial in the components of $\omega$ and hence $C^\infty$.
It therefore remains to treat the squared max term. Define the affine residuals
\[
r_i(\omega,b) \coloneqq 1 - y_i(\omega^\top x_i + b), \qquad i=1,\dots,\ell.
\]
Using again the fact that $\max(\cdot,0)^2$ is $C^1$ with gradient $\max(\cdot,0) = (\cdot)_+$ component-wise,  we may write the gradient of the objective $\nabla F$ as
\[
\nabla_\omega F(\omega,b)=\omega - 2\gamma\sum_{i=1}^\ell y_i x_i\, r_i(\omega,b)_+,
\qquad
\partial_b F(\omega,b)= - 2\gamma\sum_{i=1}^\ell y_i\, r_i(\omega,b)_+.
\]
The function $(\cdot)_+$ is piecewise affine and hence $1$-order semismooth, again see \cite{cui2021modern}.
\paragraph{Experimental data}
The results are shown for the choice $n=200$, $\ell=10^4$ and $\gamma=10^4$. The data $(x_i)$ and labels $(y_i)$ are generated by a reproducible classification problem generator in the package \texttt{scikit-learn}.

\paragraph{Optimization and parameters}
We apply Algorithm~\ref{alg:proximal_newton} with $\Lambda_0 = 1$ and  gradient exponent $p = 0.5$ and compare different Hessian update frequencies $m \in \{1,2,4,5,10\}$. As before, we have $\psi \equiv 0$, so each proximal Newton subproblem reduces to the solution of a linear system which we solve using Cholesky decomposition. Since the overall problem is convex, the systems are well conditioned. We benchmark our method against \leapssn proposed in \cite{LeapSSN}, following the experimental protocol suggested therein, and against standard gradient descent equipped with an Armijo backtracking line search.

\end{document}